\documentclass{amsart}
\usepackage[T1]{fontenc}
\usepackage[latin9]{inputenc}
\usepackage[a4paper]{geometry}
\geometry{verbose,tmargin=3cm,bmargin=3cm,lmargin=3.5cm,rmargin=3.5cm,headheight=1cm,headsep=1cm,footskip=1cm}
\pagestyle{headings}
\setcounter{tocdepth}{5}
\setlength{\parskip}{\medskipamount}
\setlength{\parindent}{0pt}
\usepackage{amsthm}
\usepackage{amstext}
\usepackage{amssymb}
\usepackage{setspace}
\usepackage{amscd}
\usepackage{mathptmx}
\usepackage{bbm}
\usepackage{hyperref}
\onehalfspacing

\makeatletter

\numberwithin{equation}{section}
\numberwithin{figure}{section}
\theoremstyle{plain}
\newtheorem{thm}{\protect\theoremname}[section]
  \theoremstyle{remark}
  \newtheorem*{rem*}{\protect\remarkname}
  \theoremstyle{plain}
  \newtheorem{prop}[thm]{\protect\propositionname}
  \theoremstyle{plain}
  \newtheorem{cor}[thm]{\protect\corollaryname}
  \theoremstyle{remark}
  \newtheorem*{acknowledgement*}{\protect\acknowledgementname}
  \theoremstyle{definition}
  \newtheorem{defn}[thm]{\protect\definitionname}
  \theoremstyle{remark}
  \newtheorem*{notation*}{\protect\notationname}
  \theoremstyle{plain}
  \newtheorem{fact}[thm]{\protect\factname}
  \theoremstyle{definition}
  \newtheorem{problem}[thm]{\protect\problemname}
  \theoremstyle{plain}
  \newtheorem{lem}[thm]{\protect\lemmaname}
  \theoremstyle{remark}
  \newtheorem{rem}[thm]{\protect\remarkname}
  
  \providecommand{\acknowledgementname}{Acknowledgement}
  \providecommand{\corollaryname}{Corollary}
  \providecommand{\definitionname}{Definition}
  \providecommand{\factname}{Fact}
  \providecommand{\lemmaname}{Lemma}
  \providecommand{\notationname}{Notation}
  \providecommand{\problemname}{Problem}
  \providecommand{\propositionname}{Proposition}
  \providecommand{\remarkname}{Remark}
\providecommand{\theoremname}{Theorem}

\newcommand{\1}{\mathbbm{1}}
\newcommand{\N}{\mathbb{N}}
\newcommand{\F}{\mathbb{F}}

\newcommand{\R}{\mathbb{R}}

\renewcommand{\Pi}{\pi}
\renewcommand{\emptyset}{\varnothing}
\renewcommand{\hat}{\widehat}

\newcommand\diam{\mathrm{diam}}

\DeclareMathOperator*{\Int}{Int}
\newcommand\id{\mathrm{id}}

\DeclareMathOperator*{\card}{card}

\DeclareMathOperator*{\Con}{Con}

\DeclareMathOperator*{\Aut}{Aut}
\newcommand{\emptyword}{\emptyset}
\newcommand{\e}{\mathrm{e}}
\DeclareMathOperator*{\Lur}{\Lambda_{ur}}
\DeclareMathOperator*{\Lr}{\Lambda_{r}}

\begin{document}

\title{Fractal Models for Normal Subgroups of Schottky Groups}

\author{Johannes Jaerisch}

\thanks{The author was supported by the research fellowship JA 2145/1-1 of
the German Research Foundation (DFG)}

\address{Department of Mathematics, Graduate School of Science Osaka University,
1-1 Machikaneyama Toyonaka, Osaka, 560-0043 Japan }

\email{jaerisch@cr.math.sci.osaka-u.ac.jp}

\subjclass[2000]{Primary 37C45, 30F40 ; Secondary 37C85, 43A07}

\keywords{Normal subgroups of Kleinian groups, exponent of convergence, graph
directed Markov system, amenability, Perron-Frobenius operator, random
walks on groups. }
\begin{abstract}
For a normal subgroup $N$ of the free group $\F_{d}$ with at least
two generators we introduce the radial limit set $\Lr(N,\Phi)$
of $N$ with respect to a graph directed Markov system $\Phi$ associated
to $\F_{d}$. These sets are shown to provide fractal models of radial
limit sets of normal subgroups of Kleinian groups of Schottky type.
Our main result states that if $\Phi$ is symmetric and linear, then we have that $\dim_{H}(\Lr(N,\Phi))=\dim_{H}(\Lr(\F_d,\Phi))$
if and only if the quotient group $\F_{d}/N$ is amenable, where $\dim_{H}$ denotes the Hausdorff dimension. This extends
a result of Brooks for normal subgroups of Kleinian groups to a large
class of fractal sets. Moreover, we show that if $\F_{d}/N$ is non-amenable
then $\dim_{H}(\Lr(N,\Phi))>\dim_{H}(\Lr(\F_d,\Phi))/2$,
which extends results by Falk and Stratmann and by Roblin.
\end{abstract}
\maketitle

\section{Introduction and Statement of Results}

In this paper we introduce and investigate linear models for the Poincar\'{e}
series and  the radial limit set  of    normal subgroups of  Kleinian groups of Schottky type. Here, a linear model means a linear graph directed Markov system (GDMS) associated to the free group  $\F_{d}=\langle g_1,\dots, g_d \rangle$ on $d\ge2$ generators. Precise definitions are given in  Section \ref{sub:Graph-Directed-Markov}, but briefly, such a system $\Phi$ is given by the vertex set $V:=\{ g_1,g^{-1}_1\dots, g_d,g^{-1}_d\}$,  edge set $E:=\{ (v,w)\in V^2:v\neq w^{-1}\}$ and by a family of contracting similarities $\{ \phi_{\left(v,w\right)} : \left(v,w\right)\in E \}$ of the  Euclidean space $\R^d$, for $d\ge1$, such that for each  $\left(v,w\right)\in E$
the contraction ratio of the similarity  $\phi_{\left(v,w\right)}$ is independent
of $w$. We  denote this ratio by $c_{\Phi}(v)$.  Also, we say that $\Phi$ is \emph{symmetric
}if $c_{\Phi}\left(g\right)=c_{\Phi}\left(g^{-1}\right)$ for all
$g\in V$. In order to state our first two main results, we must also make two  further definitions. For this,  we extend $c_{\Phi}$ to a function $c_{\Phi}:\F_{d}\rightarrow \R$ by setting $c_{\Phi}\left(g\right):=\prod_{i=1}^{n}c_{\Phi}\left(v_{i}\right)$,
where $n\in \N$ and  $\left(v_{1},\dots,v_{n}\right)\in V^{n}$ refers to  the unique representation of $g$ as a reduced word. Also, for each subgroup $H$ of $\F_d$, we  introduce the \emph{Poincar\'{e}
series of $H$ }and the \emph{exponent of convergence of $H$ with
respect to $\Phi$}  which are defined for $s\ge 0$ by
\[
P\left(H,\Phi,s\right):=\sum_{h\in H}\left(c_{\Phi}\left(h\right)\right)^{s} \quad  \text{ and } \quad\delta\left(H,\Phi\right):=\inf\left\{ t\ge 0:P\left(H,\Phi,t\right)<\infty\right\} .
\]
Our first main result gives a relation between amenability and the exponent of convergence. 

\begin{thm}
\label{thm:lineargdms-amenability-dichotomy}Let $\Phi$ be a symmetric
linear GDMS associated to $\F_{d}$. For every  normal
subgroup $N$ of $\F_{d}$, we have that
\[
\delta\left(\F_{d},\Phi\right)=\delta\left(N,\Phi\right)\,\,\textrm{if and only if }\,\,\F_{d}/N\textrm{ is amenable}.
\]
\end{thm}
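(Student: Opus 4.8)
The plan is to express the Poincar\'e series through the transfer operator of the subshift of finite type of reduced words over $V$, and then to invoke a Kesten-type amenability criterion for the $\F_{d}/N$-extension this system carries. Because $\Phi$ is linear, $c_\Phi$ is multiplicative along reduced words, so if $B_s$ denotes the $2d\times 2d$ nonnegative matrix on $V$ with $B_s(v,w)=c_\Phi(w)^s$ when $(v,w)\in E$ and $B_s(v,w)=0$ otherwise, then $P(\F_d,\Phi,s)=1+\sum_{n\ge 1}\mathbf{v}^{T}B_s^{\,n-1}\mathbf{1}$ with $\mathbf{v}=(c_\Phi(v)^s)_{v\in V}$. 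For $d\ge 2$ the matrix $B_s$ is irreducible, and each $c_\Phi(v)\in(0,1)$, so $s\mapsto\rho(B_s)$ is continuous and strictly decreasing from $\rho(B_0)=2d-1>1$ to $0$; hence $P(\F_d,\Phi,s)<\infty$ if and only if $\rho(B_s)<1$, and $\delta(\F_d,\Phi)$ is the unique parameter with $\rho(B_{\delta(\F_d,\Phi)})=1$. The symmetry assumption $c_\Phi(g)=c_\Phi(g^{-1})$ says precisely that the involution $g\mapsto g^{-1}$ of $V$ reverses the edge relation $E$ and fixes the weight; equivalently, conjugating $B_s$ by $\mathrm{diag}(c_\Phi(v)^{s/2})_{v\in V}$ produces the symmetric matrix with entries $c_\Phi(v)^{s/2}c_\Phi(w)^{s/2}$ on $E$, so $\rho(B_s)$ is an honest operator norm and the system is reversible in the sense a Kesten-type argument needs.

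An element of $\F_d$ lies in $N$ exactly when, written as a reduced word, it is trivial in $G:=\F_d/N$; therefore $P(N,\Phi,s)=\sum_n a_n(s)$, where $a_n(s)$ is the $c_\Phi^{\,s}$-weighted number of non-backtracking loops of length $n$ based at the identity of the Cayley graph of $G$ --- a weighted cogrowth of $N$ in $\F_d$. These loops are governed by the $G$-invariant nearest-neighbour operator $M_s$ on $\ell^2(G\times V)$ acting by $\delta_{(x,v)}\mapsto\sum_{w\ne v^{-1}}c_\Phi(w)^s\,\delta_{(xw,w)}$, which descends, under the quotient by $G$, to $B_s$ on $\ell^2(V)$. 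Writing $r(s):=\limsup_n a_n(s)^{1/n}$, the function $\log r$ is convex (a $\limsup$ of the convex functions $s\mapsto n^{-1}\log a_n(s)$) and hence continuous, and the elementary bound $a_n(s)\le\mathbf{v}^{T}B_s^{\,n-1}\mathbf{1}$ gives $r(s)\le\rho(B_s)$; thus $P(N,\Phi,s)<\infty$ whenever $r(s)<1$, $\delta(N,\Phi)$ is characterised by $r(\delta(N,\Phi))=1$, and $\delta(N,\Phi)\le\delta(\F_d,\Phi)$.

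The crux is the dichotomy: $r(s)=\rho(B_s)$ for every $s\ge 0$ if and only if $G$ is amenable --- the weighted, symbolic form of the Grigorchuk--Cohen cogrowth criterion and of Kesten's theorem, in the guise of an extension of Kesten's criterion to group extensions of finite-state topological Markov chains; the involution $g\mapsto g^{-1}$ furnished by the symmetry of $\Phi$ is what makes the relevant Gibbs structure reversible, which is the standing hypothesis of such a result. For the two implications: if $G$ is amenable, a F\o lner sequence $(F_k)$ in $G$ turns the Perron eigenvector of $B_s$, copied over the fibres $\{x\}\times V$ with $x\in F_k$, into a sequence of approximate $\rho(B_s)$-eigenvectors of $M_s$, forcing $r(s)\ge\rho(B_s)$ and so equality; if $G$ is non-amenable, a positive isoperimetric constant yields a uniform spectral gap, giving $r(s)<\rho(B_s)$ for every $s$. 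Granting this, the theorem follows: if $G$ is amenable then $r\equiv\rho(B_\bullet)$ and the two critical parameters agree, so $\delta(N,\Phi)=\delta(\F_d,\Phi)$; if $G$ is non-amenable then at $s=\delta(\F_d,\Phi)$ one has $\rho(B_s)=1$, hence $r(s)<1$, hence by continuity $r(s')<1$ for some $s'<\delta(\F_d,\Phi)$, so $\delta(N,\Phi)\le s'<\delta(\F_d,\Phi)$.

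The main obstacle is the amenable direction of this criterion in the present weighted, \emph{non-reversible} setting: a non-backtracking step cannot be undone, so the operators $M_s$ are genuinely non-self-adjoint and the group-von-Neumann-algebra formulation of Kesten's theorem does not apply verbatim; the F\o lner estimate has to be run carrying the last-letter bookkeeping, with the involution $g\mapsto g^{-1}$ playing the role of time-reversal, and one must verify the mild transitivity and boundedness hypotheses such criteria require. One also needs the (routine but necessary) check that the convergence threshold of $\sum_n a_n(s)$ is exactly $\{r(s)=1\}$, so that the behaviour at $r=1$ cannot shift $\delta(N,\Phi)$. The remaining ingredients --- the reduction of the two series to $B_s$ and $M_s$, the symmetrisation, and the continuity and monotonicity in $s$ --- are routine.
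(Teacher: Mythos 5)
Your overall route is the same one the paper takes: encode the Poincar\'e series of $\F_{d}$ by the weighted subshift of reduced words (your $B_{s}$), encode the Poincar\'e series of $N$ by the weighted non-backtracking loops in the Cayley graph of $G=\F_{d}/N$, i.e.\ by the group extension $(\Sigma\times G,\sigma\rtimes\Psi_{N})$ with the $\mathcal{C}(1)$-measurable potential $s\varphi$, and reduce the theorem to the dichotomy ``$r(s)=\rho(B_{s})$ for all $s$ if and only if $G$ is amenable''. Your final deduction of the theorem from that dichotomy (log-convexity and continuity in $s$, convergence of $\sum_{n}a_{n}(s)$ when $r(s)<1$, divergence when $r(s)>1$) is fine.

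The genuine gap is the dichotomy itself: it is not an off-the-shelf Kesten/cogrowth statement but precisely the paper's main technical result (Theorem \ref{thm:amenability-dichotomy-markov}), and your sketch of it breaks exactly at the point you flag. In the amenable direction, F\o lner vectors built from the Perron eigenvector of $B_{s}$ show at best that $\rho(B_{s})$ is an approximate eigenvalue of $M_{s}$ on $\ell^{2}(G\times V)$, i.e.\ a lower bound for the $\ell^{2}$-spectral radius; but $M_{s}$ is not normal, and neither its norm nor its spectral radius bounds from below the growth rate $r(s)$ of the finitely many fixed matrix coefficients $\left(M_{s}^{n}\delta_{(e,v)},\delta_{(e,w)}\right)$ --- for non-normal operators these coefficients can grow strictly slower than the spectral radius (note that the paper shows the norm of the normalized lifted operator equals $\e^{\mathcal{P}(s\varphi)}$ regardless of amenability, so norm information alone decides nothing). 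Bridging $r(s)$ with the $\ell^{2}$ data is exactly where the hypothesis $c_{\Phi}(g)=c_{\Phi}(g^{-1})$ has to act at the level of the extension, not of the base: the paper uses the involution $\kappa$ to show that $s\varphi$ is asymptotically symmetric with respect to $\Psi_{N}$, hence $\mathcal{L}_{s\varphi\circ\pi_{1}}|_{V_{j}}$ is asymptotically self-adjoint (Lemma \ref{lem:weaklysymmetry-is-weakselfadjoint}), and then a Gerl-type lemma applied to the self-adjoint operators $(\mathcal{L}_{s\varphi\circ\pi_{1}}^{n}|_{V_{j}})^{*}\mathcal{L}_{s\varphi\circ\pi_{1}}^{n}|_{V_{j}}$ (Lemma \ref{lem:lowerbound-selfadjoint}, Proposition \ref{pro:lowerbound-asymptselfadjoint}, Corollary \ref{cor:pressure-is-spectralradius}) identifies $\sup_{g}\mathcal{P}\left(s\varphi,\Psi_{N}^{-1}\{g\}\cap\Sigma^{*}\right)$ with $\log\rho\left(\mathcal{L}_{s\varphi\circ\pi_{1}}|_{V_{j}}\right)$; your diagonal symmetrization of $B_{s}$ lives on the base and does not address this. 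Likewise, the non-amenable direction is not a one-line isoperimetric gap for a weighted, non-reversible, merely $G$-invariant operator: the paper obtains $\rho\left(\mathcal{L}_{s\varphi\circ\pi_{1}}|_{V_{j}}\right)<\e^{\mathcal{P}(s\varphi)}$ by realizing the adjoint as a transition matrix of a random walk on the $1$-step graph, checking uniform irreducibility, bounded range and comparability of the invariant measure, and invoking the Ortner--Woess theorem together with the rough isometry of that graph with the Cayley graph of $G$ (Lemma \ref{lem:roughisometric-withcayleygraph}, Proposition \ref{pro:transitionmatrix-via-pfadjoint}). As written, your proposal assumes essentially what has to be proved; once these two blocks are supplied, it coincides with the paper's argument.
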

Our second main result gives a lower bound for the exponent of convergence $\delta(N,\Phi)$.
\begin{thm}
\label{thm:lineargdms-lowerhalfbound}Let $\Phi$ be a symmetric linear
GDMS associated to $\F_{d}$. For every non-trivial normal subgroup
$N$ of $\F_{d}$, we  have that
\[
\delta\left(N,\Phi\right)>\delta\left(\F_{d},\Phi\right)/2.
\]

\end{thm}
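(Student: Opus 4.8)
The plan is to exhibit some $s>\delta/2$ with $P(N,\Phi,s)=\infty$, where $\delta:=\delta(\F_d,\Phi)$; since $s\mapsto P(N,\Phi,s)$ is nonincreasing, this gives the assertion. The engine of the argument is an elementary consequence of symmetry: for all $g,h\in\F_d$ one has $c_\Phi(gh)\ge c_\Phi(g)\,c_\Phi(h)$. Indeed, writing $g=uw$ and $h=w^{-1}v$ with $w$ the maximal cancelling block (so $uv$ is reduced), one gets $c_\Phi(g)c_\Phi(h)=c_\Phi(u)c_\Phi(w)c_\Phi(w^{-1})c_\Phi(v)=c_\Phi(gh)\,c_\Phi(w)^2\le c_\Phi(gh)$, using $c_\Phi(w^{-1})=c_\Phi(w)$ (symmetry) and $c_\Phi(w)\le1$ (all contraction ratios lie in $(0,1)$). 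Fix $\gamma_0\in N\setminus\{e\}$; conjugating if necessary (legitimate since $N$ is normal) we may assume $\gamma_0$ is cyclically reduced, with cyclic centraliser $\langle w_0\rangle$. Then for all $g_1,\dots,g_k\in\F_d$ one has $\prod_{i=1}^k g_i\gamma_0 g_i^{-1}\in N$ and
\[
c_\Phi\Bigl(\prod_{i=1}^k g_i\gamma_0 g_i^{-1}\Bigr)\ \ge\ c_\Phi(\gamma_0)^k\prod_{i=1}^k c_\Phi(g_i)^2 .
\]

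Next I would record the standard divergence of the full system at its critical exponent: summing $c_\Phi(g)^s$ over reduced words $g$ of length $n$ equals a fixed row vector times $M_s^{\,n-1}$ times a fixed column vector, where $M_s$ is the $2d\times2d$ transfer matrix $M_s[v,w]=c_\Phi(w)^s\,\1[w\ne v^{-1}]$, primitive for $d\ge2$; this is therefore comparable --- with constants depending on $s$ but not on $n$ --- to $\lambda(s)^n$, $\lambda(s)$ the Perron eigenvalue. Hence $\delta$ is the unique solution of $\lambda(s)=1$, and $P(\F_d,\Phi,\delta)=\sum_n\Theta(1)=\infty$, a divergence that also holds after restricting to $g$ with a prescribed first letter.

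The core step is the construction. Let $T$ be a set of shortest representatives of the cosets of $\langle w_0\rangle$ in $\F_d$, and for each $k\ge1$ consider the elements $\prod_{i=1}^k g_i\gamma_0 g_i^{-1}$ with $g_i\in T$ and consecutive $g_i$ of distinct first letters. This constraint rules out cancellation at the junctions, so the displayed lower bound applies; and --- this is the delicate combinatorial point --- the reduced word of such a product parses uniquely into the syllables $g_i,\gamma_0,g_i^{-1}$ (using that $\gamma_0$ is cyclically reduced and that elements of $T$ interact only mildly with $w_0$), so distinct admissible tuples yield distinct elements of $N$. Summing over such tuples,
\[
P(N,\Phi,s)\ \ge\ c_\Phi(\gamma_0)^{ks}\sum_{\substack{(g_1,\dots,g_k)\in T^k\\ \text{consecutive first letters distinct}}}\ \prod_{i=1}^k c_\Phi(g_i)^{2s}.
\]
The inner sum is a weighted path count over the subshift of finite type on the $2d$ ``first letters'' whose only forbidden transitions are the $2d$ repetitions, so by Perron--Frobenius it grows like $\rho(s)^k$ as $k\to\infty$, where $\rho(s)$ is the spectral radius of $B_s:=(J-I)\,\mathrm{diag}\bigl(\beta_a(s)\bigr)_a$, with $J$ the all-ones matrix and $\beta_a(s):=\sum_{g\in T,\ g\text{ begins with }a}c_\Phi(g)^{2s}$. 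Thus $P(N,\Phi,s)\ge c(s)\bigl(c_\Phi(\gamma_0)^s\rho(s)\bigr)^k$ for every $k$, with $c(s)>0$ independent of $k$.

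It then remains to make $c_\Phi(\gamma_0)^s\rho(s)>1$ for some $s>\delta/2$. Entrywise monotonicity of the Perron eigenvalue gives $\rho(s)\ge(2d-1)\min_a\beta_a(s)$. For each letter $a$, among the length-$n$ reduced words beginning with $a$, those that are the unique shortest element of their $\langle w_0\rangle$-coset carry at least a fixed positive fraction of the $c_\Phi(\cdot)^{\delta}$-weight of all length-$n$ words beginning with $a$ --- the exceptions must end in one of finitely many prescribed blocks. Combined with the second paragraph this gives $\beta_a(\delta/2)=\sum_n\Theta(1)=\infty$ for every $a$, so by monotone convergence $\beta_a(s)\uparrow\infty$, hence $\rho(s)\to\infty$, as $s\downarrow\delta/2$. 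Therefore $c_\Phi(\gamma_0)^s\rho(s)>1$ throughout some interval $(\delta/2,\delta/2+\varepsilon)$; for such $s$, letting $k\to\infty$ forces $P(N,\Phi,s)=\infty$, whence $\delta(N,\Phi)\ge s>\delta/2$. The hard part will be the unique-parsing claim: analysing the reduced words of products of constrained conjugates of $\gamma_0$ precisely enough --- essentially a ping-pong argument --- to be sure that passing from the family $\{g_i\gamma_0 g_i^{-1}\}$ to a genuine supply of distinct elements of $N$ costs only the bounded, $k$-independent factors already absorbed into the $\beta_a$.
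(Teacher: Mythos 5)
Your overall strategy is coherent and genuinely different from the paper's, and several steps are fine (the supermultiplicativity $c_\Phi(gh)\ge c_\Phi(g)c_\Phi(h)$ from symmetry, the divergence of $P(\F_d,\Phi,\cdot)$ at $\delta$, the transfer-matrix amplification once the inner sums are justified). But there is a genuine gap exactly where you flag it, and it is not a routine verification: the claim that distinct admissible tuples $(g_1,\dots,g_k)$ yield distinct elements $\prod_i g_i\gamma_0 g_i^{-1}$ of $N$ (or at least have multiplicity bounded uniformly in $k$). Without this, the inequality $P(N,\Phi,s)\ge c(s)\bigl(c_\Phi(\gamma_0)^s\rho(s)\bigr)^k$ is unsupported and the whole amplification collapses. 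The danger is real: in a free group the same element generically has many expressions as a product of $k$ conjugates of $\gamma_0$ (Hurwitz moves, $AB=(ABA^{-1})A=B(B^{-1}AB)$). Your distinct-first-letter constraint does happen to block these particular moves (both replacement conjugators begin with the same letter), which is encouraging, but one must exclude \emph{all} coincidences, including those produced by cancellation inside a factor and by conjugators $g_i$ containing high powers of $w_0$ in their interior, so that the $\gamma_0$-cores cannot simply be located by scanning for long $w_0$-blocks. A complete argument needs a genuine normal-form/ping-pong lemma, plus some housekeeping you omit: discard the finitely many short elements of $T$ so that each factor's reduced word really begins with the first letter of $g_i$ and ends with its inverse (otherwise the no-cancellation-at-junctions claim can fail), and treat ties among shortest coset representatives. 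A secondary, more fixable gap: the assertion $\beta_a(\delta/2)=\infty$ is only sketched, and the stated justification (words ending in finitely many bad blocks carry a fraction of weight bounded away from one) is not proved and is not obvious with Gibbs constants; the clean route is to group words into $\langle w_0\rangle$-cosets and show that each coset's weight at exponent $\delta$ is dominated, up to a constant depending only on $w_0$ and $\delta$, by the weight of its shortest representative, so that restricting to $T$ preserves divergence.

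For comparison, the paper sidesteps all of this combinatorics. The amenable case is immediate from Theorem \ref{thm:lineargdms-amenability-dichotomy}. In the non-amenable case it argues by contradiction: assuming $\delta(N,\Phi)=\delta(\F_d,\Phi)/2$, Lemma \ref{lem:delta-half-divergencetype} (which is essentially your construction with $k=1$, $g\mapsto g\tau(g) g^{-1}$, and already gives divergence of the $N$-series at $\delta(\F_d,\Phi)/2$) makes a weighted matrix on $I\times(\F_d/N)$ $1$-recurrent in the sense of Vere--Jones; the associated $h$-process is a quasi-transitive recurrent random walk on the graph $X_1$, so Woess's structure theorem forces $\F_d/N$ to have polynomial growth, contradicting the exponential growth of non-amenable groups. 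So to make your route work you must actually prove the parsing lemma (and the $\beta_a$ divergence); as it stands the proposal establishes only $\delta(N,\Phi)\ge\delta(\F_d,\Phi)/2$, not the strict inequality.
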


Our next results study certain limit sets which provide fractal models of radial limit sets of Kleinian groups.    More precisely, for a GDMS $\Phi$ associated to $\F_{d}$ and a  subgroup $H$ of $\F_{d}$,   we will consider  the   \emph{radial limit set }$\Lr(H,\Phi)$ \emph{of $H$} and the  \emph{uniformly radial limit set }$\Lur(H,\Phi)$ \emph{of $H$ with respect to $\Phi$} (see  Definition \ref{def:gdms-associated-to-freegroup-and-radiallimitsets}).

\begin{prop}
\label{pro:lineargdms-brooks}Let $\Phi$ be a linear GDMS associated
to $\F_{d}$. For every  normal subgroup $N$ of $\F_{d}$,
we have that
\[
\delta\left(N,\Phi\right)=\dim_{H}\left(\Lur(N,\Phi)\right)=\dim_{H}\left(\Lr(N,\Phi)\right).
\]

\end{prop}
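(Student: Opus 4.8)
The plan is to establish the chain of equalities $\delta(N,\Phi) = \dim_H(\Lur(N,\Phi)) = \dim_H(\Lr(N,\Phi))$ by comparing the limit sets to limit sets of suitable subsystems and invoking standard dimension theory for conformal (here, linear) graph directed Markov systems. First I would recall that a linear GDMS $\Phi$ associated to $\F_d$ is in particular a conformal GDMS, so for any \emph{finitely} generated subsystem the Hausdorff dimension of its limit set equals the unique zero of the pressure function, equivalently the exponent of convergence of the associated Poincaré series (Bowen's formula / the theory of Mauldin--Urbański). The radial limit set $\Lr(N,\Phi)$ consists of those points in the limit set whose coding sequence returns infinitely often to words representing elements of $N$ (and the uniformly radial set imposes a bounded-return condition), so morally $\Lr(N,\Phi)$ is a countable increasing union of limit sets of subsystems indexed by finite subsets of $N$, while $\Lur(N,\Phi)$ is built from the \emph{single} subsystem whose allowed words are exactly (reduced words coding) elements of $N$.

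The key steps, in order, are as follows. (1) Prove $\dim_H(\Lur(N,\Phi)) \le \dim_H(\Lr(N,\Phi))$: this is immediate from $\Lur(N,\Phi)\subseteq \Lr(N,\Phi)$ and monotonicity of Hausdorff dimension. (2) Prove $\dim_H(\Lr(N,\Phi)) \le \delta(N,\Phi)$: cover $\Lr(N,\Phi)$ by cylinders corresponding to reduced words $h\in N$; since $\Phi$ is linear, the diameter of the cylinder over $h$ is comparable to $c_\Phi(h)$, so for any $t > \delta(N,\Phi)$ the sum $\sum_{h\in N} (\diam \text{ of cylinder over } h)^t \asymp \sum_{h\in N} c_\Phi(h)^t = P(N,\Phi,t) < \infty$, which gives a Hausdorff measure bound in dimension $t$; letting $t\downarrow \delta(N,\Phi)$ finishes it. (3) Prove $\delta(N,\Phi) \le \dim_H(\Lur(N,\Phi))$: for each finite symmetric subset $F\subseteq N$ closed under taking reduced-word building blocks, consider the finitely generated subsystem $\Phi_F$ whose limit set $J_F$ is contained in $\Lur(N,\Phi)$ (uniformly radial because the return words come from the finite set $F$); by Bowen's formula $\dim_H(J_F)$ equals the exponent of convergence $\delta(\langle F\rangle_{\mathrm{sub}},\Phi)$ of the corresponding sub-semigroup Poincaré series. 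Taking the supremum over such $F$ and using that $\Phi$ is linear (so the contraction ratios multiply along reduced words, and the word metric on $N$ is quasi-isometrically reflected by $c_\Phi$), the supremum of these exponents exhausts $\delta(N,\Phi)$; combined with $J_F\subseteq\Lur(N,\Phi)$ and monotonicity of dimension, this yields $\delta(N,\Phi)\le \dim_H(\Lur(N,\Phi))$. Chaining (1), (2), (3) closes the loop.

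The main obstacle is step (3), specifically the approximation of $\delta(N,\Phi)$ by exponents of convergence of \emph{finitely generated} sub-semigroups sitting inside $N$ whose limit sets are genuinely \emph{uniformly} radial. One must choose the finite generating sets $F$ carefully so that (a) every element of $\langle F\rangle$ is a reduced word in $\F_d$ whose coding genuinely returns to $N$-words with bounded gaps, ensuring $J_F \subseteq \Lur(N,\Phi)$, and (b) $\sup_F \delta(\langle F\rangle,\Phi) = \delta(N,\Phi)$, which requires a Poincaré-series approximation argument showing that finite pieces of the full sum $P(N,\Phi,s)$ already force divergence just below $\delta(N,\Phi)$. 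Here one typically uses a "broken geodesic" construction: pick finitely many elements of $N$ realizing most of the mass of the Poincaré series near the critical exponent, and note that freely reduced concatenations of them (which remain reduced up to bounded cancellation, controllable since $\F_d$ is free and $\Phi$ is linear) generate a free sub-semigroup whose Poincaré series dominates a geometric-type series with exponent approaching $\delta(N,\Phi)$. The linearity of $\Phi$ is what makes the geometry transparent: cylinder diameters are \emph{exactly} (up to a fixed multiplicative constant) the products $c_\Phi$ of the coding word, so no bounded distortion estimates are needed and the combinatorics of $\F_d$ transfers directly to the metric geometry of the limit set.
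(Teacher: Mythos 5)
Your steps (1) and (2) are essentially sound: the inclusion $\Lur(N,\Phi)\subset\Lr(N,\Phi)$ is trivial, and the covering argument for $\dim_{H}(\Lr(N,\Phi))\le\delta(N,\Phi)$ works, except that you must account for the coset: a point of $\Lr(N,\Phi)$ has its prefix products in some coset $N\gamma$, not in $N$, so you need to cover, for each fixed $\gamma\in\F_{d}$, by cylinders over words coding elements of $N\gamma$ (using $c_{\Phi}(h\gamma)\asymp_{\gamma}c_{\Phi}(h)$, since $c_{\Phi}(gh)\ge c_{\Phi}(g)c_{\Phi}(h)$ and $c_\Phi<1$ letterwise) and then invoke countable stability of Hausdorff dimension over $\gamma$. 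This is exactly how the paper phrases the upper inclusion, namely $\Lr(N,\Phi)\subset J(\tilde{\Phi})\cup\bigcup_{\eta}\phi_{\eta}(\pi_{\tilde\Phi}(\cdot))$, a countable union of Lipschitz images.

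The genuine gap is in step (3). Your lower bound rests on the claim that freely concatenating finitely many elements of $N$ produces words that "remain reduced up to bounded cancellation, controllable since $\F_{d}$ is free and $\Phi$ is linear". This is not true in general: two elements of a normal subgroup can cancel almost completely, and even a single cancelled letter at a junction makes the concatenated symbol sequence inadmissible in $\Sigma_{\Phi}$, so the sub-semigroup $\langle F\rangle$ does not define a subsystem of $\Phi$, its points need not lie in $\Lur(N,\Phi)$ with the claimed coding, and Bowen's formula for the would-be subsystem no longer compares with the Poincar\'e series of $\langle F\rangle$. Moreover the exhaustion claim $\sup_{F}\delta(\langle F\rangle,\Phi)=\delta(N,\Phi)$ is precisely the hard point and is left unproved. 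The paper resolves both issues at once by inducing: it builds a conformal GDMS $\tilde{\Phi}$ over the countable alphabet of \emph{first-return words} $\omega\in\Sigma_{\Phi}^{*}$ with $v_{1}\cdots v_{|\omega|}\in N$ and no proper prefix product in $N$. Admissible concatenation of such blocks is automatically cancellation-free (admissibility is a condition only on adjacent edge letters), $\Sigma_{\tilde\Phi}^{*}$ is in bijection with all words whose product lies in $N$ (hence $(2d-1)$-to-one onto $N$, identifying the induced pressure zero with $\delta(N,\Phi)$), the system is finitely irreducible by (\ref{eq:proof-mixingproperty}), and the generalized Bowen formula of Theorem \ref{thm:cgdms-bowen-formula} supplies exactly the exhaustion you need, since $J^{*}(\tilde{\Phi})\subset\Lur(N,\Phi)$ and $\dim_{H}(J^{*}(\tilde{\Phi}))=\dim_{H}(J(\tilde{\Phi}))$. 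If you want to keep your finitely-generated-subsystem picture, you must at least select return words with compatible junction letters (e.g.\ by pigeonholing on first and last letters and inserting connector words with product in $N$ as in (\ref{eq:proof-mixingproperty})) and then prove the exhaustion of $\delta(N,\Phi)$ by these finite families; as written, the "broken geodesic'' step is an assertion, not an argument.
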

The following  corollary is an immediate consequence of Theorem
\ref{thm:lineargdms-amenability-dichotomy}, Theorem \ref{thm:lineargdms-lowerhalfbound}
and Proposition \ref{pro:lineargdms-brooks}.
\begin{cor}
\label{main-cor}
Let $\Phi$ be a symmetric linear GDMS associated to $\F_{d}$. For every normal subgroup $N$ of $\F_{d}$, we have that
\[
\dim_{H}\left(\Lr(N,\Phi)\right)=\dim_{H}\left(\Lr(\F_d,\Phi)\right)\mbox{ if and only if }\F_{d}/N\mbox{ is amenable}.
\]
Moreover, if $N$ is non-trivial, then we have that
\[
\dim_{H}\left(\Lr(N,\Phi)\right)\,>\,\dim_{H}\left(\Lr(\F_d,\Phi)\right)/2.
\]

\end{cor}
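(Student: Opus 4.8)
The plan is simply to chain together the three results that immediately precede the corollary; no further argument is needed. First I would invoke Proposition \ref{pro:lineargdms-brooks}, which applies to \emph{every} normal subgroup of $\F_{d}$, and in particular both to $N$ and to $\F_{d}$ itself (viewed as a normal subgroup of $\F_{d}$). This yields the two identities
\[
\dim_{H}\left(\Lr(N,\Phi)\right)=\delta\left(N,\Phi\right)\qquad\text{and}\qquad\dim_{H}\left(\Lr(\F_{d},\Phi)\right)=\delta\left(\F_{d},\Phi\right),
\]
so that both assertions of the corollary are literally equivalent to statements about the exponents of convergence.

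Second, for the stated equivalence I would substitute these identities into Theorem \ref{thm:lineargdms-amenability-dichotomy}: since $\Phi$ is a symmetric linear GDMS associated to $\F_{d}$, that theorem gives $\delta\left(\F_{d},\Phi\right)=\delta\left(N,\Phi\right)$ if and only if $\F_{d}/N$ is amenable. Rewriting both sides by means of the identities from the first step immediately produces $\dim_{H}\left(\Lr(N,\Phi)\right)=\dim_{H}\left(\Lr(\F_{d},\Phi)\right)$ if and only if $\F_{d}/N$ is amenable.

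Third, for the lower bound I would apply Theorem \ref{thm:lineargdms-lowerhalfbound}: when $N$ is non-trivial we have $\delta\left(N,\Phi\right)>\delta\left(\F_{d},\Phi\right)/2$, and substituting the Hausdorff-dimension identities once more gives $\dim_{H}\left(\Lr(N,\Phi)\right)>\dim_{H}\left(\Lr(\F_{d},\Phi)\right)/2$, as claimed.

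Since every step is a direct substitution, I do not expect any genuine obstacle: the mathematical content rests entirely on the three cited results, and the corollary is a bookkeeping exercise. The only point worth checking explicitly is that Proposition \ref{pro:lineargdms-brooks} may legitimately be invoked with $\F_{d}$ in the role of the normal subgroup — which it may, trivially — so that $\delta\left(\F_{d},\Phi\right)$ can be replaced by $\dim_{H}\left(\Lr(\F_{d},\Phi)\right)$ throughout.
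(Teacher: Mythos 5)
Your proposal is correct and matches the paper exactly: the paper itself states that the corollary is an immediate consequence of Theorem \ref{thm:lineargdms-amenability-dichotomy}, Theorem \ref{thm:lineargdms-lowerhalfbound} and Proposition \ref{pro:lineargdms-brooks}, which is precisely the chaining you carry out (including applying the proposition with $\F_d$ itself as the normal subgroup).
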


Let us now briefly summarize the corresponding results for normal subgroups of Kleinian groups, which served as  the motivation for our  main results in this paper. A more
detailed discussion of Kleinian groups and how these relate to the concept of
a GDMS will be given in Section \ref{sec:Kleinian-groups}. We start by giving a short introduction to Kleinian groups.

Recall that, for $m\in\N$, an $\left(m+1\right)$-dimensional hyperbolic  manifold  can be described
by the hyperbolic $\left(m+1\right)$-space $\mathbb{D}^{m+1}:=\left\{ z\in\R^{m+1}:\left|z\right|<1\right\} $
equipped with the hyperbolic metric $d$ and  quotiented by the action
of a Kleinian group $G$. The \emph{Poincar\'{e}
series  of   $G$} and the  \emph{exponent
of convergence  of
$G$} are for $s\ge 0$ given by
\[
P\left(G,s\right):=\sum_{g\in G}\e^{-sd\left(0,g\left(0\right)\right)} \quad  \text{ and }\quad \delta\left(G\right):=\inf\left\{ t\ge 0:P\left(G,t\right)<\infty\right\}.
\]
A normal subgroup $N$ of a Kleinian group $G$ gives
rise to an intermediate  covering of the associated hyperbolic manifold $\mathbb{D}^{m+1}/G$.
It was shown by Brooks in  \cite{MR783536}
that if $N$ is  a normal subgroup of a convex cocompact Kleinian group
$G$ such that $\delta\left(G\right)>m/2$, then  we have that
\begin{equation}
\delta\left(N\right)=\delta\left(G\right)\mbox{ if and only if }G/N\mbox{ is amenable.}\label{eq:intro-brooks}
\end{equation}

Moreover, Falk and Stratmann  \cite{MR2097162} showed that for
every non-trivial normal subgroup $N$ of a non-elementary  Kleinian
group $G$ we have $\delta\left(N\right)\ge\delta\left(G\right)/2$.
Using different methods, Roblin (\cite{MR2166367}) proved that if
$G$ is of $\delta\left(G\right)$-divergence type, that is, if $P\left(G,\delta\left(G\right)\right)=\infty$,
then we have
\begin{equation}
\delta\left(N\right)>\delta\left(G\right)/2.\label{eq:intro-roblin}
\end{equation}
Another proof of (\ref{eq:intro-roblin}) can be found in \cite{Bonfert-Taylor2012}
for a convex cocompact Kleinian group $G$, where it was also shown that $\delta(N)$ can be arbitrarily close to $\delta(G)/2$.

Note that our results stated in Theorem \ref{thm:lineargdms-amenability-dichotomy} and Theorem \ref{thm:lineargdms-lowerhalfbound} extend the assertions given in (\ref{eq:intro-brooks}) and (\ref{eq:intro-roblin}) for Kleinian groups.

\begin{rem*}
Note that  in Theorem \ref{thm:lineargdms-amenability-dichotomy} there is no restriction on $\delta\left(\F_{d},\Phi\right)$
  whereas for the proof of
(\ref{eq:intro-brooks}) it was vital  to assume that $\delta\left(G\right)>m/2$.
It was conjectured by Stratmann \cite{MR2191250} that this assumption can be removed from
Brooks' Theorem. In fact, it was shown by
Sharp in \cite[Theorem 2]{MR2322540} that if $G$ is a finitely generated Fuchsian groups, that is for $m=1$, and if $N$ is a normal subgroup of $G$, then  amenability of $G/N$ implies $\delta\left(G\right)=\delta\left(N\right)$.
Recently, Stadlbauer \cite{Stadlbauer11} showed  that the equivalence
in (\ref{eq:intro-brooks}) extends to the class of essentially free
Kleinian groups with arbitrary exponent of convergence $\delta\left(G\right)$.
\end{rem*}

Finally, let us turn our attention to limit sets of Kleinian groups. For a Kleinian group $G$,  the \emph{radial limit set $L_{\mathrm{r}}\left(G\right)$} and
the \emph{uniformly radial limit set $L_{\mathrm{ur}}\left(G\right)$}
(see Definition \ref{def:radiallimitsets-fuchsian}) are both subsets
of the boundary $\mathbb{S}:=\left\{ z\in\R^{m+1}:\left|z\right|=1\right\} $
of $\mathbb{D}^{m+1}$. By a theorem of Bishop and Jones (\cite[Theorem 1.1]{MR1484767},
cf. \cite{MR2087134}),  we have for every Kleinian group $G$ that
\begin{equation}
\delta\left(G\right)=\dim_{H}\left(L_{\mathrm{ur}}\left(G\right)\right)=\dim_{H}\left(L_{\mathrm{r}}\left(G\right)\right),\label{eq:bishop-jones}
\end{equation}
where $\dim_{H}$ denotes the Hausdorff dimension with respect to
the Euclidean metric on $\mathbb{S}$. Combining (\ref{eq:intro-brooks})
and (\ref{eq:bishop-jones}) then shows that for every 
normal subgroup $N$ of a convex cocompact Kleinian group $G$ for which
$\delta\left(G\right)>m/2$, we have
\begin{equation}
\dim_{H}\left(L_{\mathrm{r}}\left(N\right)\right)=\dim_{H}\left(L_{\mathrm{r}}\left(G\right)\right)\mbox{ if and only if }G/N\mbox{ is amenable.}\label{eq:brooks-via-hausdorffdimension}
\end{equation}

We would like to point that there is a  close analogy between the results on  radial limit sets of Kleinian groups stated  in (\ref {eq:bishop-jones}) and  (\ref{eq:brooks-via-hausdorffdimension}),  and our results in the context of linear  GDMSs associated to free groups  stated  in Proposition \ref{pro:lineargdms-brooks} and Corollary \ref{main-cor}.

Let us now further clarify the relation between GDMSs associated to free groups and Kleinian groups of Schottky type  (see Definition \ref{def:kleinian-of-schottkytype}).  For this, recall that a Kleinian group of Schottky type  $G=\langle g_{1},\dots,g_{d}\rangle$ is isomorphic to a free group. In Definition \ref{def:canonical-model-kleinianschottky} we introduce
 a  \emph{canonical GDMS $\Phi_{G}$} \emph{associated to} $G$. We will then show in Proposition \ref{pro:canonicalgdms-gives-radiallimitset} that for every non-trivial normal subgroup $N$ of $G$ we have that
\[
L_{\mathrm{r}}\left(N\right)=\Lr(N,\Phi_G)\mbox{ and }L_{\mathrm{ur}}\left(N\right)=\Lur(N,\Phi_G).
\]
This shows that our fractal models of radial limit sets of Kleinian groups of Schottky type can be thought of as a replacement of the conformal  generators of the Kleinian group by similarity maps. Our main results  show that several important properties of Kleinian groups  extend to these fractal models.

Let us now end this introductory section by briefly summarizing the methods used to obtain our results and how this paper is organized.
Theorem \ref{thm:lineargdms-amenability-dichotomy} and
Theorem \ref{thm:lineargdms-lowerhalfbound} are based on and extend
results of Woess \cite{MR1743100} and Ortner and Woess \cite{MR2338235},
which in turn refer back to work of P\'{o}lya \cite{MR1512028} and Kesten \cite{MR0109367,MR0112053}.
Specifically, we provide generalizations of \cite{MR2338235} for
weighted graphs. Our new thermodynamic formalism for group-extended
Markov systems (see Section \ref{sec:Thermodynamic-Formalism-grpextension})
 characterizes amenability of discrete groups in terms of topological
pressure and the spectral radius of the Perron-Frobenius operator
acting on a certain $L^{2}$-space.

The paper is organized as follows. In Section \ref{sec:Preliminaries}
we collect the necessary background on thermodynamic formalism, GDMSs and
random walks on graphs. In Section \ref{sec:Thermodynamic-Formalism-grpextension}, we prove  a thermodynamic formalism for group-extended Markov systems,
which is also of independent interest. Using the results
of Section \ref{sec:Thermodynamic-Formalism-grpextension} we prove
our main results in Section \ref{sec:Proofs}. Finally, in Section \ref{sec:Kleinian-groups} we
provide the background on Kleinian groups of Schottky type, which has motivated our results.

After having finished this paper,  Stadlbauer (\cite{Stadlbauer11}) proved a 
partial  extension of Theorem \ref{thm:amenability-dichotomy-markov} (see
Remark \ref{proof-comment-stadlabuer}).
Moreover, in \cite{Jaerisch12a} the author has extended Lemma \ref{lem:delta-half-divergencetype}
and Theorem \ref{thm:lineargdms-lowerhalfbound} in order to give
a short new proof of (\ref{eq:intro-roblin}) for  Kleinian groups.
\begin{acknowledgement*}
Parts of this paper constitute certain  parts of the author's doctoral
thesis supervised by Marc Kesseb\"ohmer at the University of Bremen.
The author would like to express his  deep gratitude to Marc Kesseb\"ohmer
and Bernd Stratmann for their support and many fruitful discussions. The author thanks an anonymous referee for the careful reading of the manuscript and for valuable comments on the exposition of this paper.  Final thanks go to Sara Munday for helping to improve the presentation of the paper significantly.
\end{acknowledgement*}

\section{Preliminaries\label{sec:Preliminaries}}

\subsection{Symbolic Thermodynamic Formalism}

Throughout, the underlying symbolic space for  the symbolic thermodynamic formalism will be a 
\emph{Markov shift $\Sigma$ }, which is given by
\[
\Sigma:=\left\{ \omega:=\left(\omega_{1},\omega_{2},\ldots\right)\in I^{\N}:\; a\left(\omega_{i},\omega_{i+1}\right)=1\,\,\mbox{for all }i\in\N\right\} ,
\]
where $I$ denotes a finite or countable infinite \emph{alphabet}, the matrix $A=\left(a\left(i,j\right)\right)\in\left\{ 0,1\right\} ^{I\times I}$
is the \emph{incidence matrix} and the \emph{shift map} $\sigma:\Sigma\rightarrow\Sigma$
is defined by  $\sigma(\left(\omega_{1},\omega_{2},\ldots\right)):=\left(\omega_{2},\omega_{3},\ldots\right)$, for each $\left(\omega_{1},\omega_{2},\ldots\right)\in \Sigma$.
We always assume that for each $i\in I$ there exists $j\in I$ such
that $a\left(i,j\right)=1$. The set of \emph{$A$-admissible words} of length
$n\in\mathbb{N}$ is given by
\[
\Sigma^{n}:=\left\{(\omega_1, \dots, \omega_n)\in I^{n}:\,\, a\left(\omega_{i},\omega_{i+1}\right)=1\mbox{ for all }i\in\left\{ 1,\dots,n-1\right\} \right\}
\]
and we set $\Sigma^{0}:=\left\{ \emptyword\right\} $, where $\emptyword$
denotes the empty word. Note that $\emptyset$ will also be used to denote the
empty set. The set of all finite $A$-admissible words is
denoted by
\[
\Sigma^{*}:=\bigcup_{n\in\N}\Sigma^{n}.
\]
Let us also define the \emph{word length function} $\left|\cdot\right|:\,\Sigma^{*}\cup\Sigma\cup\left\{ \emptyword\right\} \rightarrow\N_{0}\cup\left\{ \infty\right\} $,
where for $\omega\in\Sigma^{*}$ we set $\left|\omega\right|$ to
be the unique $n\in\N$ such that $\omega\in\Sigma^{n}$, for $\omega\in\Sigma$
we set $\left|\omega\right|:=\infty$ and $\emptyword$ is the unique
word of length zero. For each $\omega\in\Sigma^{*}\cup\Sigma\left\{ \emptyword\right\} $
and $n\in\N_{0}$ with $n\le\left|\omega\right|$, we define $\omega_{|n}:=\left(\omega_{1},\dots,\omega_{n}\right)$.
For $\omega,\tau\in\Sigma$, we set $\omega\wedge\tau$ to be the longest common initial block of $\omega$ and $\tau$, that is, $\omega\wedge\tau:=\omega_{|l}$,
where $l:=\sup\left\{ n\in\N_{0}:\omega_{|n}=\tau_{|n}\right\} $.
For $\omega\in\Sigma^{n}$, $n\in\N_{0}$, the \emph{cylinder set} $[\omega]$ defined by  $\omega$ is given by $\left[\omega\right]:=\left\{ \tau\in\Sigma:\tau_{|n}=\omega\right\} $.
Note that $\left[\emptyword\right]=\Sigma$.

If $\Sigma$ is the Markov shift with alphabet $I$ whose incidence
matrix consists entirely of $1$s, then we have that $\Sigma=I^{\N}$
and $\Sigma^{n}=I^{n}$, for all $n\in\N$. Then we set $I^{*}:=\Sigma^{*}$
and $I^{0}:=\left\{ \emptyword\right\} $. For $\omega,\tau\in I^{*}\cup\left\{ \emptyword\right\} $, let $\omega\tau\in I^{*}\cup\left\{ \emptyword\right\} $ denote 
the \emph{concatenation} of $\omega$ and $\tau$, which is defined
by $\omega\tau:=\left(\omega_{1},\dots,\omega_{\left|\omega\right|},\tau_{1},\dots,\tau_{\left|\tau\right|}\right)$, 
for $\omega,\tau\in I^{*}$,  and if  $\omega\in I^{*}\cup\left\{ \emptyword\right\} $ then we define  $\omega\emptyword:=\emptyword\omega:=\omega$. Note that $I^{*}$
is the free semigroup over the set $I$ which satisfies the following
universal property:  For each semigroup $S$ and for every map $u:I\rightarrow S$,
there exists a unique semigroup homomorphism $\hat{u}:I^{*}\rightarrow S$
such that $\hat{u}\left(i\right)=u\left(i\right)$, for all $i\in I$
(see \cite[Section 3.10]{MR1650275}).

Moreover, we equip $I^{\N}$ with the product topology of the discrete topology
on $I$ and the Markov shift $\Sigma\subset I^{\N}$ is equipped with
the subspace topology. The latter  topology on $\Sigma$ is the weakest topology on $\Sigma$ 
such that for each $j\in\N$ the \emph{canonical projection on the
$j$-th coordinate} $p_{j}:\Sigma\rightarrow I$ is continuous. A
countable basis for this topology on $\Sigma$ is given by the cylinder
sets $\left\{ \left[\omega\right]:\omega\in\Sigma^{*}\right\} $.
We will  use  the following metric  generating the topology
on $\Sigma$. For $\alpha>0$ fixed, we define the metric $d_{\alpha}$
on $\Sigma$ given by
\[
d_{\alpha}\left(\omega,\tau\right):=\e^{-\alpha\left|\omega\wedge\tau\right|},\mbox{ for all }\omega,\tau\in\Sigma.
\]

For a function $f:\Sigma\rightarrow\R$ and $n\in\N_{0}$,  we use the notation $S_{n}f:\Sigma\rightarrow\R$
to denote the \emph{ergodic sum} of $f$ with respect to the left-shift map $\sigma$, in other words, $S_{n}f:=\sum_{i=0}^{n-1}f\circ\sigma^{i}$.

Furthermore,  the following function spaces will be crucial throughout.
\begin{defn}
We say that a function $f:\Sigma\rightarrow\R$ is {\em bounded} whenever $\Vert f\Vert_{\infty}:=\sup_{\omega\in\Sigma}\left|f\left(\omega\right)\right|$ is finite.
We denote by $C_{b}\left(\Sigma\right)$ the real vector space of
bounded continuous functions on $\Sigma$. We say that $f:\Sigma\rightarrow\R$
is \emph{$\alpha$-H\"older continuous}, for some $\alpha>0$, if
\[
V_{\alpha}\left(f\right):=\sup_{n\ge1}\left\{ V_{\alpha,n}\left(f\right)\right\} <\infty,
\]
where for each $n\in\N$ we let
\[
V_{\alpha,n}\left(f\right):=\sup\left\{ \e^{-\alpha}\frac{\left|f\left(\omega\right)-f\left(\tau\right)\right|}{d_{\alpha}\left(\omega,\tau\right)}:\omega,\tau\in\Sigma,\left|\omega\wedge\tau\right|\ge n\right\} .
\]
The function \emph{$f$ }is called \emph{ H\"older continuous} if there exists
$\alpha>0$ such that $f$ is $\alpha$-H\"older continuous.

For $\alpha>0$ we also introduce the real vector space
\[
H_{\alpha}\left(\Sigma\right):=\left\{ f\in C_{b}\left(\Sigma\right):\, f\textrm{ is }\alpha-\textrm{H\"older continuous}\right\} ,
\]
which we assume to be equipped with the norm $\Vert \cdot \Vert_{\alpha}$ which is given by
\[
\Vert f\Vert_{\alpha}:=\Vert f\Vert_{\infty}+V_{\alpha}\left(f\right).
\]
\end{defn}
We need the following notion of pressure, which was originally introduced in \cite[Definition 1.1]{JaerischKessebohmer10}.
\begin{defn}
\label{def:induced-topological-pressure}For
$\varphi,\psi:\Sigma\rightarrow\R$ with $\psi\ge0$, $\mathcal{C}\subset\Sigma^{*}$
and $\eta>0$, the $\psi$\emph{-induced pressure of} $\varphi$ (with
respect to $\mathcal{C}$) is given by
\[
\mathcal{P}_{\psi}\left(\varphi,\mathcal{C}\right):=\limsup_{T\rightarrow\infty}\frac{1}{T}\log\sum_{{\omega\in\mathcal{C}\atop T-\eta<S_{\omega}\psi\le T}}\exp S_{\omega}\varphi,
\]
where we have set $S_{\omega}\varphi:=\sup_{\tau\in\left[\omega\right]}S_{\left|\omega\right|}\varphi\left(\tau\right)$. Note that $\mathcal{P}_{\psi}\left(\varphi,\mathcal{C}\right)$ is an element of   $\overline{\R}:=\R\cup\left\{-\infty, +\infty\right\} $. \end{defn}
\begin{rem*}
It was shown in \cite[Theorem 2.4]{JaerischKessebohmer10} that the
definition of $\mathcal{P}_{\psi}\left(\varphi,\mathcal{C}\right)$
is in fact independent of the choice of $\eta>0$. For this reason, 
we do not refer to $\eta>0$ in the definition of the induced pressure. \end{rem*}
\begin{notation*}
If $\psi$ and/or $\mathcal{C}$ is left out in the notation
of induced pressure, then we tacitly assume that $\psi=1$ and/or
$\mathcal{C}=\Sigma^{*}$, that is, we let   $\mathcal{P}(\varphi):=\mathcal{P}_{1}\left(\varphi,\Sigma^*\right)$.
\end{notation*}
The following fact is taken from \cite[Remark 2.11, Remark 2.7]{JaerischKessebohmer10}.
\begin{fact}
\label{fac:criticalexponents-via-pressure}Let  $\Sigma$ be a  Markov
shift over a finite alphabet. If $\varphi,\psi:\Sigma\rightarrow\R$ are two functions such that 
$\psi\ge c>0$, for some $c>0$, and if  $\mathcal{C}\subset\Sigma^{*}$
then $\mathcal{P}_{\psi}\left(\varphi,\mathcal{C}\right)$ is equal to the
unique real number $s\in\R$ for which  $\mathcal{P}\left(\varphi-s\psi,\mathcal{C}\right)=0$.
Moreover, we have that
\[
\mathcal{P}_{\psi}\left(\varphi,\mathcal{C}\right)=\inf\left\{ s\in\R:\sum_{\omega\in\mathcal{C}}\e^{S_{\omega}\left(\varphi-s\psi\right)}<\infty\right\} .
\]

\end{fact}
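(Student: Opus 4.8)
The plan is to reduce everything to three elementary facts about the single-variable function $s\mapsto\mathcal{P}\left(\varphi-s\psi,\mathcal{C}\right)$, and then to compare the $\psi$-induced pressure with the ordinary pressure (the case $\psi=1$) by re-indexing the summation according to the value of $S_{\omega}\psi$ rather than of $\left|\omega\right|$. Throughout I may assume $\varphi$ and $\psi$ are bounded --- automatic on a finite-alphabet shift for continuous functions --- and that $\mathcal{C}$ is infinite (the only case of interest; if $\mathcal{C}$ is finite the statement is degenerate).

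First I would record the key comparison. Since $S_{|\omega|}\left(\varphi-s'\psi\right)\left(\tau\right)=S_{|\omega|}\left(\varphi-s\psi\right)\left(\tau\right)-\left(s'-s\right)S_{|\omega|}\psi\left(\tau\right)$ holds pointwise and $c\left|\omega\right|\le S_{|\omega|}\psi\left(\tau\right)\le\Vert\psi\Vert_{\infty}\left|\omega\right|$ because $\psi\ge c>0$, taking the supremum over $\tau\in\left[\omega\right]$ gives, for $s'>s$,
\[
S_{\omega}\left(\varphi-s\psi\right)-\left(s'-s\right)\Vert\psi\Vert_{\infty}\left|\omega\right|\ \le\ S_{\omega}\left(\varphi-s'\psi\right)\ \le\ S_{\omega}\left(\varphi-s\psi\right)-\left(s'-s\right)c\left|\omega\right|.
\]
Substituting this into the $\limsup$ defining $\mathcal{P}\left(\cdot,\mathcal{C}\right)=\mathcal{P}_{1}\left(\cdot,\mathcal{C}\right)$ --- which, since $S_{\omega}1=\left|\omega\right|$, is simply $\limsup_{n\to\infty}\tfrac{1}{n}\log\sum_{\omega\in\mathcal{C},\,|\omega|=n}\e^{S_{\omega}\left(\cdot\right)}$ --- shows that $s\mapsto\mathcal{P}\left(\varphi-s\psi,\mathcal{C}\right)$ is finite ($\mathcal{C}$ being infinite and $\varphi,\psi$ bounded), Lipschitz, strictly decreasing at rate at least $c$, and decreases from $+\infty$ to $-\infty$; hence it has a unique zero $s_{0}\in\R$. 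A standard root-test argument then shows that $\mathcal{P}\left(\varphi-s\psi,\mathcal{C}\right)<0$ forces $\sum_{\omega\in\mathcal{C}}\e^{S_{\omega}\left(\varphi-s\psi\right)}<\infty$ while $\mathcal{P}\left(\varphi-s\psi,\mathcal{C}\right)>0$ forces divergence of that series; combined with the strict monotonicity, this identifies $s_{0}$ with $\inf\bigl\{s:\sum_{\omega\in\mathcal{C}}\e^{S_{\omega}\left(\varphi-s\psi\right)}<\infty\bigr\}$.

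It remains to prove $\mathcal{P}_{\psi}\left(\varphi,\mathcal{C}\right)=s_{0}$. For ``$\le$'', fix $s>s_{0}$, so $M:=\sum_{\omega\in\mathcal{C}}\e^{S_{\omega}\left(\varphi-s\psi\right)}<\infty$; on the range $T-\eta<S_{\omega}\psi\le T$ one has $\e^{S_{\omega}\varphi}\le\e^{sT+|s|\eta}\,\e^{S_{\omega}\left(\varphi-s\psi\right)}$, so the sum defining $\mathcal{P}_{\psi}$ at level $T$ is at most $M\,\e^{sT+|s|\eta}$, giving $\mathcal{P}_{\psi}\left(\varphi,\mathcal{C}\right)\le s$; let $s\downarrow s_{0}$. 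For the reverse inequality I would argue by contradiction: if $\mathcal{P}_{\psi}\left(\varphi,\mathcal{C}\right)<s<s_{0}$, pick $\varepsilon>0$ with $\mathcal{P}_{\psi}\left(\varphi,\mathcal{C}\right)<s-\varepsilon$, use the quoted $\eta$-independence of $\mathcal{P}_{\psi}$ to fix a convenient $\eta>0$, and partition $\mathcal{C}=\bigsqcup_{k\ge1}\mathcal{C}_{k}$ with $\mathcal{C}_{k}:=\left\{\omega\in\mathcal{C}:\left(k-1\right)\eta<S_{\omega}\psi\le k\eta\right\}$ (a genuine partition because $0<c\le S_{\omega}\psi<\infty$ for every $\omega$). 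For all large $k$ the definition of $\mathcal{P}_{\psi}$ forces $\sum_{\omega\in\mathcal{C}_{k}}\e^{S_{\omega}\varphi}\le\e^{\left(s-\varepsilon\right)k\eta}$, and since $\e^{-sS_{\omega}\psi}\le\e^{-sk\eta+|s|\eta}$ on $\mathcal{C}_{k}$ this bounds $\sum_{\omega\in\mathcal{C}_{k}}\e^{S_{\omega}\left(\varphi-s\psi\right)}$ geometrically in $k$; summing over $k$ gives $\sum_{\omega\in\mathcal{C}}\e^{S_{\omega}\left(\varphi-s\psi\right)}<\infty$, contradicting the divergence at $s<s_{0}$ established above. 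Hence $\mathcal{P}_{\psi}\left(\varphi,\mathcal{C}\right)=s_{0}$, and with the previous step the whole statement follows.

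The only genuinely delicate point --- and the place I expect to spend the most care --- is that $S_{\omega}\left(\cdot\right)$ is a supremum over the cylinder $\left[\omega\right]$ and hence is \emph{not} additive in its argument, so each comparison above must be performed first at the level of the Birkhoff sums $S_{|\omega|}\left(\cdot\right)\left(\tau\right)$ and only afterwards have the supremum applied; this is exactly where the two-sided bound $c\le\psi\le\Vert\psi\Vert_{\infty}$ enters. The remaining ingredients --- the root test, the re-indexing of the sum by the value of $S_{\omega}\psi$, and the fact that $\mathcal{P}_{\psi}$ does not depend on $\eta$ (available from \cite[Theorem 2.4]{JaerischKessebohmer10}) --- are routine.
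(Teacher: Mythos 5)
The paper does not actually prove this Fact --- it is imported verbatim from \cite[Remarks 2.7 and 2.11]{JaerischKessebohmer10} --- so there is no internal argument to compare yours against; I can only assess correctness. Your treatment of the map $s\mapsto\mathcal{P}\left(\varphi-s\psi,\mathcal{C}\right)$ is sound: the two-sided comparison is correctly performed at the level of Birkhoff sums before taking suprema (the correction $(s'-s)S_{\left|\omega\right|}\psi(\tau)$ is squeezed between constants times $\left|\omega\right|$, which is the summation index of the ordinary pressure), and the existence/uniqueness of the zero $s_{0}$ together with the root-test identification of $s_{0}$ as the abscissa of convergence are fine.

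The gap is in the two comparisons linking $\mathcal{P}_{\psi}$ to $s_{0}$, and it is precisely the non-additivity you flagged but then claimed to have disposed of. For the upper bound you need $S_{\omega}\varphi-S_{\omega}\left(\varphi-s\psi\right)\le sT+\left|s\right|\eta$ on the window $T-\eta<S_{\omega}\psi\le T$; since $S_{\omega}\varphi-S_{\omega}\left(\varphi-s\psi\right)\le\sup_{\tau\in\left[\omega\right]}sS_{\left|\omega\right|}\psi\left(\tau\right)$, this equals $sS_{\omega}\psi\le sT$ only for $s\ge0$, while for $s<0$ it equals $s\inf_{\tau\in\left[\omega\right]}S_{\left|\omega\right|}\psi\left(\tau\right)$ and you would need the \emph{infimum}, not the supremum, to exceed $T-\eta$. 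Symmetrically, the step $\e^{S_{\omega}\left(\varphi-s\psi\right)}\le\e^{-sS_{\omega}\psi}\e^{S_{\omega}\varphi}$ used on $\mathcal{C}_{k}$ is valid only for $s\le0$; for $s>0$ one gets $S_{\omega}\left(\varphi-s\psi\right)\le S_{\omega}\varphi-s\inf_{\tau\in\left[\omega\right]}S_{\left|\omega\right|}\psi\left(\tau\right)$, and the window controls only $\sup_{\tau}S_{\left|\omega\right|}\psi\left(\tau\right)$. The case $s>0$ is unavoidable in your contradiction argument whenever $0\le\mathcal{P}_{\psi}\left(\varphi,\mathcal{C}\right)<s_{0}$, which is exactly the regime relevant to this paper. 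The hypothesis $c\le\psi\le\Vert\psi\Vert_{\infty}$ only yields $\inf_{\tau}S_{\left|\omega\right|}\psi\left(\tau\right)\ge\left(c/\Vert\psi\Vert_{\infty}\right)S_{\omega}\psi$, a multiplicative loss that survives the $\tfrac{1}{T}\log$ limit. This is not cosmetic: under the hypotheses you actually use (boundedness plus $\psi\ge c$) the statement is false. On the full shift over $\left\{0,1\right\}$ take $\varphi=0$ and $\psi\left(\tau\right)=1$ if $\tau$ is eventually $0$ and $\psi\left(\tau\right)=2$ otherwise; then $S_{\omega}\psi=2\left|\omega\right|$ for every $\omega$, so $\mathcal{P}_{\psi}\left(0,\Sigma^{*}\right)=\tfrac{1}{2}\log2$, whereas $S_{\omega}\left(-s\psi\right)=-s\left|\omega\right|$ for $s>0$ gives $s_{0}=\log2$. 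The repair is to use uniform continuity of $\psi$ (which you invoke only to get boundedness): then $\sup_{\tau,\tau'\in\left[\omega\right]}\left|S_{\left|\omega\right|}\psi\left(\tau\right)-S_{\left|\omega\right|}\psi\left(\tau'\right)\right|=o\left(\left|\omega\right|\right)=o\left(T\right)$ on the relevant index sets, and all your inequalities hold up to $\e^{o\left(T\right)}$ factors, which suffices.
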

The next definition goes back to the work of Ruelle and Bowen (\cite{MR0289084,bowenequilibriumMR0442989}).
\begin{defn}
Let $\varphi:\Sigma\rightarrow\R$ be continuous. We say
that a Borel probability measure \emph{$\mu$ is a Gibbs measure for
$\varphi$ }if there exists a constant $C>0$ such that
\begin{equation}
C^{-1}\le\frac{\mu\left[\omega\right]}{\e^{S_{\left|\omega\right|}\varphi\left(\tau\right)-\left|\omega\right|\mathcal{P}\left(\varphi\right)}}\le C,\mbox{ for all }\omega\in\Sigma^{*}\mbox{ and }\tau\in\left[\omega\right].\label{eq:gibbs-equation}
\end{equation}

\end{defn}
The Perron-Frobenius operator, which we are going to define now,  provides  a useful tool for guaranteeing  the
existence of Gibbs measures and for deriving some of  the  stochastic
properties of these measures  (see \cite{MR0289084,bowenequilibriumMR0442989}).
\begin{defn}\label{def:perron-frobenius}
Let $\Sigma$ be a Markov shift over a finite alphabet  and let $\varphi:\Sigma\rightarrow \R$ be continuous. The \emph{Perron-Frobenius operator associated to $\varphi$} is the operator $\mathcal{L}_{\varphi}:C_b(\Sigma)\rightarrow C_b(\Sigma)$ which is given, for each $f\in C_b(\Sigma)$ and $x\in \Sigma$, by
\[
\mathcal{L}_{\varphi}(f)(x):=\sum_{y\in \sigma^{-1}\left\{ x\right\} }\e^{\varphi\left(y\right)}f\left(y\right).
\]
\end{defn}

The  following theorem  summarizes some of the main results of
the thermodynamic formalism for a Markov shift $\Sigma$ with a finite alphabet
$I$ (see for instance \cite{MR648108} and \cite[Section 2]{MR2003772}).
Here, $\Sigma$ is called  \emph{irreducible} if for all $i,j\in I$
there exists $\omega\in\Sigma^{*}\cup\left\{ \emptyword\right\} $
such that $i\omega j\in\Sigma^{*}$. Moreover, for $k\in\N_{0}$,  the $\sigma$-algebra generated by $\left\{ \left[\omega\right]:\omega\in\Sigma^{k}\right\} $ is denoted by $\mathcal{C}(k)$,   and we say that  $f:\Sigma \rightarrow \R$  is $\mathcal{C}(k)$ -\emph{measurable} if $f^{-1}(A)\in \mathcal{C}(k)$ for every $A\in \mathcal{B}\left(\R\right)$,  where $\mathcal{B}\left(\R\right)$ denotes the Borel $\sigma$-algebra on $\R$.
\begin{thm}
\label{thm:perron-frobenius-thm-urbanski}Let $\Sigma$ be an irreducible
Markov shift over a finite alphabet and let $\varphi:\Sigma\rightarrow\R$
be $\alpha$-H\"older continuous, for some $\alpha>0$. Then there exists
a unique Borel probability measure $\mu$ supported on $\Sigma$ such
that $\int\mathcal{L}_{\varphi}\left(f\right)\, d\mu=\e^{\mathcal{P}\left(\varphi\right)}\int f\, d\mu$, for all $f\in C_{b}\left(\Sigma\right)$. Furthermore, $\mu$ is a Gibbs
measure for $\varphi$ and  there exists a unique $\alpha$-H\"older
continuous function $h:\Sigma\rightarrow\R^{+}$ such that $\int h\,d\mu=1$ and $\mathcal{L}_{\varphi}\left(h\right)=\e^{\mathcal{P}\left(\varphi\right)}h$.
The measure $h\,d\mu$ is the unique $\sigma$-invariant Gibbs measure
for $\varphi$ and will be denoted by $\mu_{\varphi}$. If $\varphi:\Sigma\rightarrow\R$ is $\mathcal{C}(k)$-measurable, for some $k\in \N_0$,
then $h$ is $\mathcal{C}\!\left(\max\left\{ k-1,1\right\} \right)$-measurable.
\end{thm}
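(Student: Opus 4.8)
The plan is to run the classical Ruelle--Perron--Frobenius argument for a finite-alphabet irreducible subshift with H\"older potential, carried out in the order: eigenmeasure, then eigenfunction, then the Gibbs property, then invariance and all the uniqueness assertions, and finally the measurability refinement. Since the alphabet is finite, $\mathcal{P}(\varphi)$ is finite (Fact~\ref{fac:criticalexponents-via-pressure} with $\psi\equiv 1$), and replacing $\varphi$ by $\varphi-\mathcal{P}(\varphi)$ multiplies $\mathcal{L}_{\varphi}$ by the scalar $\e^{-\mathcal{P}(\varphi)}$; so I may assume $\mathcal{P}(\varphi)=0$ and aim to produce $\mu$ and $h$ with $\int\mathcal{L}_{\varphi}f\,d\mu=\int f\,d\mu$ for all $f\in C_{b}(\Sigma)$ and $\mathcal{L}_{\varphi}h=h$. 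For the eigenmeasure I would use a fixed-point argument: the dual operator $\mathcal{L}_{\varphi}^{*}$ induces the weak-$*$ continuous self-map $\nu\mapsto\mathcal{L}_{\varphi}^{*}\nu/(\mathcal{L}_{\varphi}^{*}\nu)(\1)$ of the weak-$*$ compact convex set of Borel probability measures on the compact space $\Sigma$, so the Schauder--Tychonoff theorem yields a probability measure $\mu$ and a constant $\lambda>0$ with $\int\mathcal{L}_{\varphi}f\,d\mu=\lambda\int f\,d\mu$. Iterating gives $\int\mathcal{L}_{\varphi}^{n}\1\,d\mu=\lambda^{n}$, and since $\mathcal{L}_{\varphi}^{n}\1(\omega)=\sum_{\tau\in I^{n}:\,\tau\omega\in\Sigma}\e^{S_{n}\varphi(\tau\omega)}$, the bounded-distortion estimate discussed below shows $\lambda^{n}$ agrees, up to a multiplicative constant independent of $n$, with $\sum_{\omega\in\Sigma^{n}}\e^{S_{\omega}\varphi}$; together with Fact~\ref{fac:criticalexponents-via-pressure} this forces $\log\lambda=\mathcal{P}(\varphi)=0$.

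Next I would extract the eigenfunction. Set $L_{n}:=\mathcal{L}_{\varphi}^{n}\1$. The $\alpha$-H\"older continuity of $\varphi$, combined with irreducibility, yields a constant $C\ge 1$ independent of $n$ such that $C^{-1}\le L_{n}(\omega)/L_{n}(\omega')\le C$ for all $\omega,\omega'\in\Sigma$, such that $C^{-1}\le L_{n}\le C$ on $\Sigma$ (after the normalisation $\lambda=1$), and such that the $L_{n}$ are uniformly $\alpha$-H\"older. By the Arzel\`a--Ascoli theorem the Ces\`aro averages $N^{-1}\sum_{n=1}^{N}L_{n}$ admit a uniformly convergent subsequence, whose limit $h$ is $\alpha$-H\"older, satisfies $C^{-1}\le h\le C$, and obeys $\mathcal{L}_{\varphi}h=h$ (since $\mathcal{L}_{\varphi}(N^{-1}\sum_{n=1}^{N}L_{n})-N^{-1}\sum_{n=1}^{N}L_{n}=N^{-1}(L_{N+1}-L_{1})\to 0$ uniformly and $\mathcal{L}_{\varphi}$ is continuous on $C_{b}(\Sigma)$); rescaling makes $\int h\,d\mu=1$. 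The Gibbs property of $\mu$ then follows by testing the eigenmeasure identity against cylinder indicators, which are continuous: for $\omega\in\Sigma^{*}$ with $n=|\omega|$ one has $\mu[\omega]=\int\mathcal{L}_{\varphi}^{n}\1_{[\omega]}\,d\mu$, and $\mathcal{L}_{\varphi}^{n}\1_{[\omega]}(x)$ equals $\e^{S_{n}\varphi(\omega x)}$ when $\omega x$ is admissible and vanishes otherwise, so bounded distortion gives $C^{-1}\le\mu[\omega]\,\e^{-S_{n}\varphi(\tau)}\le C$ for every $\tau\in[\omega]$, which is exactly \eqref{eq:gibbs-equation} with $\mathcal{P}(\varphi)=0$ (and with the factor $\e^{-|\omega|\mathcal{P}(\varphi)}$ restored in general). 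Irreducibility enters here to guarantee $\mu[\omega]>0$ for every admissible $\omega$ and that $C$ is uniform over all cylinders.

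Next, invariance and uniqueness. Set $\mu_{\varphi}:=h\,d\mu$. For $f\in C_{b}(\Sigma)$, from $\mathcal{L}_{\varphi}((f\circ\sigma)\,h)=f\,\mathcal{L}_{\varphi}h=fh$ and the eigenmeasure identity one gets $\int f\circ\sigma\,d\mu_{\varphi}=\int\mathcal{L}_{\varphi}((f\circ\sigma)h)\,d\mu=\int fh\,d\mu=\int f\,d\mu_{\varphi}$, so $\mu_{\varphi}$ is $\sigma$-invariant, and it is Gibbs for $\varphi$ because $0<\inf h\le\sup h<\infty$. For uniqueness I would use that any two Gibbs measures for $\varphi$ are mutually boundedly absolutely continuous (divide their inequalities \eqref{eq:gibbs-equation}), and that a $\sigma$-invariant Gibbs measure is ergodic --- one deduces from \eqref{eq:gibbs-equation} a uniform comparison of $\mu_{\varphi}(A\cap\sigma^{-n}B)$ with $\mu_{\varphi}(A)\mu_{\varphi}(B)$ for $A$ a cylinder and $B$ arbitrary, giving exactness and in particular ergodicity; hence two $\sigma$-invariant Gibbs measures, being mutually absolutely continuous and ergodic, coincide. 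This gives uniqueness of $\mu_{\varphi}$, uniqueness of the eigenmeasure $\mu$ (any eigenmeasure is a Gibbs measure by the computation above, hence has bounded density against $\mu$, so a second candidate would produce a second invariant Gibbs measure), and uniqueness of the normalised positive eigenfunction $h$ (a second one $h'$ would give the invariant Gibbs measure $h'\,d\mu$, forcing $h'=h$ $\mu$-a.e.\ and then everywhere by continuity, since $\mu$ charges every cylinder). Finally, the measurability refinement: if $\varphi$ is $\mathcal{C}(k)$-measurable and $f$ is $\mathcal{C}(m)$-measurable, then $\mathcal{L}_{\varphi}f(x)=\sum_{i\in I:\,a(i,x_{1})=1}\e^{\varphi(ix)}f(ix)$ depends only on $x_{1},\dots,x_{\max\{k,m\}-1}$, hence is $\mathcal{C}(\max\{k,m\}-1)$-measurable; starting from $\1\in\mathcal{C}(0)$ and iterating, every $L_{n}$ with $n\ge 1$ is $\mathcal{C}(\max\{k-1,1\})$-measurable, and this class is preserved under Ces\`aro averaging and uniform limits, so $h$ is $\mathcal{C}(\max\{k-1,1\})$-measurable.

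The crux --- the one place needing genuine work --- is the bounded-distortion estimate underlying the first two steps: using the $\alpha$-H\"older bound on $\varphi$ one controls $|S_{n}\varphi(\tau\omega)-S_{n}\varphi(\tau\omega')|$ by a summable geometric series in $|\omega\wedge\omega'|$, uniformly in $n$, and using irreducibility one bridges any admissible word to any other in bounded length, so as to compare $L_{n}$ across all of $\Sigma$; together these bound $\sup_{\omega,\omega'}L_{n}(\omega)/L_{n}(\omega')$ uniformly in $n$. This single estimate is what makes the eigenfunction exist, be strictly positive and H\"older, what identifies $\lambda$ with $\e^{\mathcal{P}(\varphi)}$, and what yields the two-sided Gibbs bound; by contrast the fixed-point argument, the invariance computation, the ergodicity input for uniqueness, and the coordinate bookkeeping for measurability are all routine.
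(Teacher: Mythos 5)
Your outline is the classical Ruelle--Perron--Frobenius argument (eigenmeasure via Schauder--Tychonoff, eigenfunction from bounded distortion plus Arzel\`a--Ascoli applied to Ces\`aro averages, Gibbs property, invariance, uniqueness via ergodicity, and coordinate bookkeeping for the measurability refinement). This is exactly the route taken by the sources the paper cites in place of a proof (Walters and Mauldin--Urba\'nski), so in spirit you are doing what the paper does; most steps are correct as sketched, including the identification of the eigenvalue with $\e^{\mathcal{P}(\varphi)}$, the derivation of the Gibbs bounds from the eigenmeasure identity, the invariance computation, and the uniqueness bookkeeping.

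There is, however, one step that fails as stated. The theorem only assumes irreducibility, so the shift may have period $p\ge 2$, and then $\mu_{\varphi}$ is not exact and not even mixing: if $A=B$ is the union of the letter cylinders belonging to one cyclic class, then $\mu_{\varphi}\left(A\cap\sigma^{-n}A\right)=0$ whenever $n\not\equiv 0\ (\mathrm{mod}\ p)$, so no inequality of the form $\mu_{\varphi}\left(A\cap\sigma^{-n}B\right)\ge c\,\mu_{\varphi}(A)\mu_{\varphi}(B)$ can hold uniformly in large $n$, and ``exactness'' is simply false here. The repair is standard and cheap: given a $\sigma$-invariant set $B$ with $\mu_{\varphi}(B)>0$, pick a letter $j_{0}$ with $\mu_{\varphi}\left(B\cap\left[j_{0}\right]\right)>0$ and, for each cylinder $\left[\omega\right]$, a bridging word $\beta$ of bounded length (depending on the last letter of $\omega$ and on $j_{0}$) with $\omega\beta j_{0}$ admissible; the Gibbs estimates then give $\mu_{\varphi}\left(\left[\omega\right]\cap B\right)=\mu_{\varphi}\left(\left[\omega\right]\cap\sigma^{-\left(\left|\omega\right|+\left|\beta\right|\right)}B\right)\ge c(B)\,\mu_{\varphi}\left(\left[\omega\right]\right)$, whence $\mu_{\varphi}(B)=1$. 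Ergodicity is all your uniqueness argument needs. Two smaller cautions in the same vein: the two-sided bound $C^{-1}\le L_{n}\le C$ for each single $n$ does not follow merely from ``bridging in bounded length'', because bridging changes the word length and in the periodic case the bridge length cannot be prescribed exactly; the uniform upper bound is easy, and for the lower bound it suffices (and is easier) to bound the Ces\`aro averages of $L_{n}$ below, which is all your construction of $h$ actually uses. Finally, $\mathcal{L}_{\varphi}f$ for $\mathcal{C}(k)$-measurable $\varphi$ and $\mathcal{C}(m)$-measurable $f$ is $\mathcal{C}\!\left(\max\left\{ k-1,m-1,1\right\} \right)$-measurable (the set of admissible predecessors depends on $x_{1}$), not $\mathcal{C}\!\left(\max\left\{ k,m\right\} -1\right)$; your final conclusion $\mathcal{C}\!\left(\max\left\{ k-1,1\right\} \right)$ is nevertheless the correct one.
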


\subsection{Graph Directed Markov Systems\label{sub:Graph-Directed-Markov}}

In this section we will first recall the definition of a graph directed
Markov system (GDMS), which was introduced by Mauldin and Urba\'nski
\cite{MR2003772}. Subsequently, we will  introduce the notion of a linear  GDMS associated to a  free group and certain radial limit sets.
\begin{defn}
\label{def:gdms}
A \emph{graph directed Markov system (GDMS)} $\Phi:=\left(V,\left(X_{v}\right)_{v\in V},E,i,t,\left(\phi_{e}\right)_{e\in E},A\right)$
consists of a finite vertex set $V$, a family of nonempty compact
metric spaces $\left(X_{v}\right)_{v\in V}$, a countable edge set
$E$, the maps $i,t:E\rightarrow V$ defining the initial and terminal
vertex of an edge, a family of injective contractions $\phi_{e}:X_{t\left(e\right)}\rightarrow X_{i\left(e\right)}$
with Lipschitz constants bounded by some $0<s<1$, and an edge incidence
matrix $A=\left(a\left(e,f\right)\right)\in\left\{ 0,1\right\} ^{E\times E}$
such that $a\left(e,f\right)=1$ implies $t\left(e\right)=i\left(f\right)$,
for all $e,f\in E$. For a GDMS $\Phi$ there exists a canonical \emph{coding
map} $\pi_{\Phi}:\Sigma_{\Phi}\rightarrow\oplus_{v\in V}X_{v}$, which is defined by
\[
\bigcap_{n\in\N}\phi_{\omega_{|n}}\left(X_{t\left(\omega_{n}\right)}\right)=\left\{ \pi_{\Phi}\left(\omega\right)\right\},
\]
where $\oplus_{v\in V}X_{v}$ denotes the disjoint union of the sets
$X_{v}$, $\phi_{\omega|_n}:=\phi_{\omega_1}\circ\cdots\circ\phi_{\omega_n}$ and  $\Sigma_{\Phi}$ denotes the Markov shift with alphabet
$E$ and incidence matrix $A$. We set
\[
J\left(\Phi\right):=\pi_{\Phi}\left(\Sigma_{\Phi}\right),\quad J^{*}\left(\Phi\right):=\bigcup_{F\subset E,\card\left(F\right)<\infty}\pi_{\Phi}\left(\Sigma_{\Phi}\cap F^{\N}\right),
\]
and refer to $J\left(\Phi\right)$ as the \emph{limit set of $\Phi$}.
\end{defn}
The following notion was introduced in \cite[Section 4]{MR2003772}.
\begin{defn}
\label{def:cgdms}The GDMS $\Phi=\left(V,\left(X_{v}\right)_{v\in V},E,i,t,\left(\phi_{e}\right)_{e\in E},A\right)$
is called \emph{conformal }if the following conditions are satisfied.

\renewcommand{\theenumi}{\alph{enumi}}
\begin{enumerate}
\item \label{enu:cgdms-a-phasespace}For $v\in V$, the \emph{phase space}
$X_{v}$ is a compact connected subset of a Euclidean space $\left(\R^{D},\Vert\cdot\Vert\right)$,
for some $D\geq1$, such that $X_{v}$ is equal to the closure of
its interior, that is $X_{v}=\overline{\Int(X_{v})}$.
\item \textit{\emph{\label{enu:cgdms-b-osc}(}}\textit{Open set condition
}\textit{\emph{(OSC))}} For all $a,b\in E$ with $a\ne b$, we have
that
\[
\phi_{a}\left(\Int(X_{t\left(a\right)})\right)\cap\phi_{b}\left(\Int(X_{t\left(b\right)})\right)=\emptyset.
\]

\item \label{enu:cgdms-c-conformalextension}For each vertex $v\in V$ there
exists an open connected set $W_{v}\supset X_{v}$ such that the map
$\phi_{e}$ extends to a $C^{1}$ conformal diffeomorphism of $W_{v}$
into $W_{i\left(e\right)}$, for every $e\in E$ with $t\left(e\right)=v$.
\item \textit{\emph{\label{enu:cgdms-d-coneproperty}(}}\textit{Cone property}\textit{\emph{)}}
There exist $l>0$ and $0<\gamma<\pi/2$ such that for each $x\in X\subset\R^{D}$
there exists an open cone $\Con(x,\gamma,l)\subset\Int(X)$ with vertex
$x$, central angle of measure $\gamma$ and altitude $l$.
\item \label{enu:cgdms-e-hoelderderivative}There are two constants $L\geq1$
and $\alpha>0$ such that for each $e\in E$ and $x,y\in X_{t\left(e\right)}$
we have
\[
\big|\left|\phi_{e}'(y)\right|-\left|\phi_{e}'(x)\right|\big|\leq L\inf_{u\in W_{t\left(e\right)}}\left|\phi_{e}'\left(u\right)\right|\Vert y-x\Vert^{\alpha}.
\]

\end{enumerate}
\end{defn}
The \emph{associated geometric potential $\zeta_{\Phi}:\Sigma_{\Phi}\rightarrow\R^{-}$
of a conformal GDMS $\Phi$} is defined by
\[
\zeta_{\Phi}\left(\omega\right):=\log\left|\phi_{\omega_{1}}'\left(\pi_{\Phi}\left(\sigma\left(\omega\right)\right)\right)\right|,\mbox{ for all }\omega\in\Sigma_{\Phi}.
\]
A Markov shift $\Sigma$   with a finite or countable alphabet $I$  is called \emph{finitely irreducible}
if there exists a finite set $\Lambda\subset\Sigma^{*}$ such that
for all $i,j\in I$ there exists a word $\omega\in\Lambda\cup\left\{ \emptyword\right\} $
such that $i\omega j\in\Sigma^{*}$ (see \cite[Section 2]{MR2003772}). Note that if  $I$ is     finite, then $\Sigma$ is finitely irreducible if and only if $\Sigma$ is irreducible. 
The following result from \cite[Theorem 3.7]{MR2413348} shows that
in the sense of Hausdorff dimension, the limit set of a conformal
GDMS with a finitely irreducible incidence matrix can be exhausted
by its finitely generated subsystems. The last equality in
Theorem \ref{thm:cgdms-bowen-formula}  follows from \cite[Corollary 2.10]{JaerischKessebohmer10}
since the associated geometric potential  of the conformal GDMS  $\Phi$  is bounded away from zero by $-\log\left(s\right)$, where $s$ denotes the uniform bound of the Lipschitz constants of the contractions of $\Phi$ (see Definition \ref{def:gdms}).
\begin{thm}
[Generalized Bowen's formula]\label{thm:cgdms-bowen-formula}Let
$\Phi$ be a conformal GDMS such that $\Sigma_{\Phi}$ is finitely
irreducible. We then have that
\[
\dim_{H}\left(J\left(\Phi\right)\right)=\dim_{H}\left(J^{*}\left(\Phi\right)\right)=\inf\left\{ s\in\R:\mathcal{P}\left(s\zeta_{\Phi}\right)\le0\right\} =\mathcal{P}_{-\zeta_{\Phi}}\left(0,\Sigma_{\Phi}^{*}\right).
\]

\end{thm}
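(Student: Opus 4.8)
The plan is to set $\theta:=\inf\{s\in\R:\mathcal{P}(s\zeta_\Phi)\le0\}$ and prove that the four quantities coincide by a sandwich: I will show $\dim_H(J(\Phi))\le\theta$ on the one hand and $\dim_H(J^*(\Phi))\ge\theta$ on the other, and then invoke the trivial inclusion $J^*(\Phi)\subset J(\Phi)$ to close the loop. The final equality $\theta=\mathcal{P}_{-\zeta_\Phi}(0,\Sigma_\Phi^*)$ will follow immediately from \cite[Corollary 2.10]{JaerischKessebohmer10} applied with $\psi:=-\zeta_\Phi$, which by Definition \ref{def:gdms} is bounded away from $0$ since the $\phi_e$ have Lipschitz constants uniformly below $1$. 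The geometric workhorse throughout is bounded distortion: telescoping condition \ref{enu:cgdms-e-hoelderderivative} along a word and using the conformal extension \ref{enu:cgdms-c-conformalextension}, I get a constant $K\ge1$ independent of $\omega\in\Sigma_\Phi^*$ with $|\phi_\omega'(x)|\asymp|\phi_\omega'(y)|$ on $X_{t(\omega)}$, hence $\diam\big(\phi_\omega(X_{t(\omega)})\big)\asymp\exp\big(S_\omega\zeta_\Phi\big)$, and, using the cone property \ref{enu:cgdms-d-coneproperty}, $\phi_\omega(X_{t(\omega)})$ contains a ball of radius comparable to $\exp\big(S_\omega\zeta_\Phi\big)$.

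For the upper bound I would fix $s>\theta$. Since $\zeta_\Phi$ is bounded above by a negative constant, the map $t\mapsto\mathcal{P}(t\zeta_\Phi)$ is strictly decreasing where finite (adding a negative constant to the potential subtracts it from the pressure), so $\mathcal{P}(s\zeta_\Phi)<0$; reading off Definition \ref{def:induced-topological-pressure} this gives $\sum_{\omega\in\Sigma_\Phi^n}\exp(s\,S_\omega\zeta_\Phi)\to0$ as $n\to\infty$. For each $n$ the countable family $\big\{\phi_\omega(X_{t(\omega)}):\omega\in\Sigma_\Phi^n\big\}$ covers $J(\Phi)$ by sets of uniformly vanishing diameter, and by bounded distortion $\sum_{\omega\in\Sigma_\Phi^n}\diam\big(\phi_\omega(X_{t(\omega)})\big)^s\lesssim\sum_{\omega\in\Sigma_\Phi^n}\exp(s\,S_\omega\zeta_\Phi)\to0$, so the $s$-dimensional Hausdorff measure of $J(\Phi)$ vanishes and $\dim_H(J(\Phi))\le s$. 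Letting $s\downarrow\theta$ finishes this half.

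For the lower bound I would exploit finite irreducibility: fix a finite $\Lambda\subset\Sigma_\Phi^*$ witnessing it. Given any finite $F\subset E$, I enlarge $F$ to a finite $F'$ containing all letters appearing in $\Lambda$, so that the restricted Markov shift $\Sigma_{F'}:=\Sigma_\Phi\cap(F')^\N$ is irreducible, and I write $\Phi_{F'}$ for the corresponding conformal GDMS over the finite alphabet $F'$. Its Bowen parameter $\theta_{F'}$, the unique zero of $t\mapsto\mathcal{P}_{F'}(t\zeta_{\Phi_{F'}})$, exists by continuity and strict monotonicity. Applying Theorem \ref{thm:perron-frobenius-thm-urbanski} to the $\alpha$-H\"older potential $\theta_{F'}\zeta_{\Phi_{F'}}$ on the irreducible finite shift $\Sigma_{F'}$ yields a Gibbs measure, and I would push it forward by $\pi_{\Phi_{F'}}$ to a measure $\nu$ on $J(\Phi_{F'})$; combining the Gibbs inequality \eqref{eq:gibbs-equation}, the distortion estimate, and a bounded-overlap argument for the sets $\phi_\omega(X_{t(\omega)})$ supplied by the OSC \ref{enu:cgdms-b-osc} and the cone property, one obtains $\nu(B(x,r))\le C\,r^{\theta_{F'}}$ for all $x\in J(\Phi_{F'})$ and small $r$, whence the mass distribution principle gives $\dim_H(J(\Phi_{F'}))\ge\theta_{F'}$. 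It then remains to show $\sup_{F'}\theta_{F'}=\theta$: the inequality $\mathcal{P}_{F'}\le\mathcal{P}$ (the subshift has fewer admissible words) forces $\theta_{F'}\le\theta$, while for $s<\theta$ we have $\mathcal{P}(s\zeta_\Phi)>0$, so I can pick finitely many words of a large common length whose weighted sum $\sum\exp(s\,S_\omega\zeta_\Phi)$ exceeds $1$, adjoin the connecting words of $\Lambda$, and concatenate to see that the finite subsystem built from these letters has $\mathcal{P}_{F'}(s\zeta_{\Phi_{F'}})>0$ and thus $\theta_{F'}>s$. Since $J^*(\Phi)=\bigcup_{F'}J(\Phi_{F'})$ is a countable union of subsets of $J(\Phi)$, countable stability of Hausdorff dimension gives $\dim_H(J^*(\Phi))=\sup_{F'}\theta_{F'}=\theta$, and together with the upper bound all four quantities agree.

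I expect the lower bound to be the main obstacle, and within it two points demand care. First, the Ahlfors-type bound $\nu(B(x,r))\le C\,r^{\theta_{F'}}$ really uses the whole package of conformality axioms: bounded distortion from \ref{enu:cgdms-e-hoelderderivative} and \ref{enu:cgdms-c-conformalextension}, separation from the OSC \ref{enu:cgdms-b-osc}, and the uniform cone/ball condition \ref{enu:cgdms-d-coneproperty} to turn cylinder estimates into honest ball estimates. Second, the exhaustion $\sup_{F'}\theta_{F'}=\theta$ is where finite irreducibility is indispensable, since it is exactly what lets one enlarge an arbitrary finite alphabet to one whose subshift is still irreducible, so that Theorem \ref{thm:perron-frobenius-thm-urbanski} and the classical Bowen formula apply to each subsystem. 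By contrast, the upper bound and the reduction of the final identity to \cite{JaerischKessebohmer10} are essentially formal once bounded distortion is in hand.
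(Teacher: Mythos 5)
Your outline is correct, but it is worth pointing out that the paper does not prove this theorem at all: the first three equalities are quoted verbatim from \cite[Theorem 3.7]{MR2413348} (which in turn rests on the machinery of \cite[Chapter 4]{MR2003772}), and the last equality is obtained by applying \cite[Corollary 2.10]{JaerischKessebohmer10} with $\psi=-\zeta_{\Phi}\ge-\log s>0$, exactly as you do at the end. What you have written is essentially a reconstruction of the Mauldin--Urba\'nski/Roy--Urba\'nski proof: the upper bound $\dim_{H}(J(\Phi))\le\theta$ by covering with $n$-cylinder images and bounded distortion; the lower bound by exhausting with finite subsystems $\Phi_{F'}$ whose alphabets are enlarged to contain the letters of the witness set $\Lambda$ (so that irreducibility survives, Theorem \ref{thm:perron-frobenius-thm-urbanski} applies, and the Gibbs measure plus the mass distribution principle give $\dim_{H}(J(\Phi_{F'}))\ge\theta_{F'}$); and the exhaustion $\sup_{F'}\theta_{F'}=\theta$ via the concatenation argument for $s<\theta$. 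All of these steps are sound, and you correctly isolate the two genuinely delicate points: the Ahlfors-type estimate $\nu(B(x,r))\le Cr^{\theta_{F'}}$, which needs the full package of OSC, cone property and bounded distortion to convert cylinder estimates into ball estimates (this is precisely \cite[Lemma 4.2.5, Theorem 4.2.11]{MR2003772}), and the role of finite irreducibility in the exhaustion. The only places I would ask for slightly more care are routine: for $s>\theta$ one should note explicitly that $\mathcal{P}(s\zeta_{\Phi})\le\mathcal{P}(s'\zeta_{\Phi})+(s-s')\log s<0$ for some $s'\in[\theta,s)$ with $\mathcal{P}(s'\zeta_{\Phi})\le0$, which is where the uniform contraction bound enters; and in the concatenation step the finite family of words must be chosen so large that its weighted sum beats the multiplicative loss $\min_{\gamma\in\Lambda\cup\{\emptyword\}}\e^{sS_{\gamma}\zeta_{\Phi}}$ incurred by the connecting words. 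In short, your route is self-contained where the paper simply cites the literature; the price is that the substantial geometric measure theory hidden in the lower bound must actually be carried out, which the cited references do in full.
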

Let us now give the definition of a GDMS $\Phi$ associated
to the free group $\F_{d}$ of rank $d\geq2$ and introduce the radial limit set of a normal subgroup
$N$ of $\F_{d}$ with respect to $\Phi$.
\begin{defn}
\label{def:gdms-associated-to-freegroup-and-radiallimitsets} Let $\Phi=\left(V,\left(X_{v}\right)_{v\in V},E,i,t,\left(\phi_{e}\right)_{e\in E},A\right)$ be a GDMS and let $d\ge2$. The GDMS  $\Phi$ is said to be   \emph{associated to $\F_{d}=\langle g_{1},\dots,g_{d}\rangle$}, if $V=\left\{ g_1, g_{1}^{-1},\dots,g_d, g_{d}^{-1}\right\} $,
$E=\left\{ \left(v,w\right)\in V^{2}:v\neq w^{-1}\right\} $, the maps $i,t:E\rightarrow V$
are  given by $i\left(v,w\right)=v$ and $t\left(v,w\right)=w$,  for each $(v,w)\in E$, and
the incidence matrix $A=\left(a\left(e,f\right)\right)\in\left\{ 0,1\right\} ^{E\times E}$
satisfies $a\left(e,f\right)=1$ if and only if $t\left(e\right)=i\left(f\right)$,
for all $e,f\in E$. If additionally $\Phi$ is a conformal GDMS such that, for each $(v,w)\in E$, the map  $\phi_{\left(v,w\right)}$ is a similarity for which the contraction ratio  is independent
of $w$, then $\Phi$ is called a    \emph{linear GDMS associated to} $\F_d$.

For a  subgroup $H$ of $\F_{d}$ and a GDMS $\Phi$ associated
to $\F_{d}$, the\emph{ radial }and the\emph{ uniformly radial limit
set of $H$ with respect to $\Phi$} are respectively  given by
\begin{align*}
\Lr(H,\Phi) & :=\pi_{\Phi}\left\{ \left(v_{i},w_{i}\right)\in\Sigma_{\Phi}:\exists\gamma\in\F_{d}\mbox{ such that for infinitely many }n\in\N,\, v_{1}\cdot\dots\cdot v_{n}\in H\gamma\right\}
\\
\text{and}
\\
\Lur(H,\Phi) & :=\pi_{\Phi}\left\{ \left(v_{i},w_{i}\right)\in\Sigma_{\Phi}:\exists\Gamma\subset\F_{d}\mbox{ finite such that for all }n\in\N,\, v_{1}\cdot\dots\cdot v_{n}\in H \Gamma \right\} .
\end{align*}

\end{defn}

\begin{rem*}It is clear that if $\Phi$ is  a GDMS generated by a family of similarity maps, then $\Phi$ automatically satisfies (c) and (e) in Definition \ref{def:cgdms} of a conformal GDMS.
\end{rem*}

\subsection{Random Walks on Graphs and Amenability}

In this section we collect some useful definitions and results concerning random walks on graphs. We will mainly follow \cite{MR1743100}.
\begin{defn}
\label{def:graphs}A \emph{graph $X=\left(V,E\right)$} consists of
a countable \emph{vertex set} $V$ and an \emph{edge set }$E\subset V\times V$
such that $\left(v,w\right)\in E$ if and only if $\left(w,v\right)\in E$.
We write $v\sim w$ if $\left(v,w\right)\in E$, which defines an
equivalence relation on $V$. For all $v,w\in V$ and $k\in\N_{0}$,
a \emph{path of length $k$ from $v$ to $w$} is a sequence $\left(v_{0},\dots,v_{k}\right)\in V^{k+1}$
such that $v_{0}=v$, $v_{k}=w$ and $v_{i-1}\sim v_{i}$ for all
$1\le i\le k$. For all $v\in V$, let $\mathrm{deg}\left(v\right):=\card\left\{ w\in V:w\sim v\right\} $
denote the \emph{degree }of the vertex $v$. The graph $\left(V,E\right)$
is called \emph{connected} if for all $v,w\in V$ with $v\neq w$,
there exists $k\in\N$ and a path of length $k$ from $v$ to $w$.
For a connected graph $X=\left(V,E\right)$ and $v,w\in V$ we let
$d_{X}\left(v,w\right)$ denote the minimal length of all paths from
$v$ to $w$, which defines the \emph{graph metric }$d_{X}\left(\cdot,\cdot\right):V\times V\rightarrow\N_{0}$.
The graph $\left(V,E\right)$ is said to have \emph{bounded geometry}
if it is connected and if $\sup_{v\in V}\left\{ \mathrm{deg}\left(v\right)\right\} <\infty$.
For each set of vertices $A\subset V$ we define $dA:=\left\{ v\in A:\exists w\in V\setminus A\mbox{ such that }v\sim w\right\} $.
\end{defn}
We now recall an important property of groups, which was introduced
by von Neumann \cite{vonNeumann1929amenabledef} under the German
name {\em messbar}. Later, groups with this property were renamed {\em amenable
groups} by Day \cite{day1949amenabledef} and also referred to as {\em groups with full
Banach mean value} by F\o lner \cite{MR0079220}.
\begin{defn}
\label{def:amenable-group}A discrete group\emph{
$G$ }is said to be\emph{ amenable} if there exists a finitely additive probability
measure $\nu$ on the set of all subsets of $G$ which is invariant
under left multiplication by elements of $G$, that is, $\nu\left(A\right)=\nu\left(g\left(A\right)\right)$
for all $g\in G$ and $A\subset G$.
\end{defn}
We will also require  the concept of an amenable graph,  which extends the concept of amenability for  groups (see Proposition \ref{pro:groupamenable-iff-graphamenable}
below).
\begin{defn}
\label{def:amenable-graph}A graph $X=\left(V,E\right)$
with bounded geometry is called \emph{amenable} if and only if there
exists $\kappa>0$ such that for all finite sets $A\subset V$ we
have $\card\left(A\right)\le\kappa\card\left(dA\right)$.
\end{defn}
For the study of graphs in terms of amenability, the following definition
is useful.
\begin{defn}
A \emph{rough isometry (or quasi-isometry) between
two metric spaces} $\left(Y,d_{Y}\right)$ and $\left(Y',d_{Y'}\right)$
is a map $\varphi:Y\rightarrow Y'$ which has the following properties.
There exist constants $A,B>0$ such that for all $y_{1},y_{2}\in Y$
we have
\[
A^{-1}d_{Y}\left(y_{1},y_{2}\right)-A^{-1}B\le d_{Y'}\left(\varphi\left(y_{1}\right),\varphi\left(y_{2}\right)\right)\le Ad_{Y}\left(y_{1},y_{2}\right)+B
\]
and for all $y'\in Y'$ we have
\[
d_{Y'}\left(y',\varphi\left(Y\right)\right)\le B.
\]
Two metric spaces \emph{$\left(Y,d_{Y}\right)$ }and\emph{ $\left(Y',d_{Y'}\right)$
}are said to be \emph{roughly isometric} if there exists a rough isometry between
$\left(Y,d_{Y}\right)$ and $\left(Y',d_{Y'}\right)$. For connected
graphs $X=\left(V,E\right)$ and $X=\left(V',E'\right)$ with graph
metrics $d_{X}$ and $d_{X'}$ we say that the graphs $X$ and $X'$\emph{
}are\emph{ roughly isometric} if the metric spaces $\left(V,d_{X}\right)$
and $\left(V',d_{X'}\right)$ are roughly isometric.
\end{defn}
The next theorem states that amenability of graphs is invariant under rough isometries
(\cite[Theorem 4.7]{MR1743100}).
\begin{thm}
\label{thm:amenability-is-roughisometry-invariant}Let $X$ and $X'$
be graphs with bounded geometry such that $X$ and $X'$ are roughly
isometric. We then have that
$X$ is  amenable if and only if $X'$ is amenable.

\end{thm}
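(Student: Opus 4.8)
The plan is to recast amenability as the existence of a F\o lner sequence and then pull such a sequence back through the rough isometry. First I would unravel Definition~\ref{def:amenable-graph}: a bounded-geometry graph $X=(V,E)$ \emph{fails} to be amenable precisely when there is a sequence of nonempty finite sets $(A_n)_{n\in\N}\subset V$ with $\card(dA_n)/\card(A_n)\to 0$, which I shall call a \emph{F\o lner sequence} for $X$. So it suffices to show that the existence of a F\o lner sequence is a rough-isometry invariant. Next I would note that a rough isometry $\varphi\colon V\to V'$ with constants $A,B$ admits a quasi-inverse: choosing, for each $v'\in V'$, a point $\psi(v')\in V$ with $d_{X'}(\varphi(\psi(v')),v')\le B$ gives a rough isometry $\psi\colon V'\to V$, so ``roughly isometric'' is a symmetric relation and it is enough to prove one implication --- \emph{if $X'$ carries a F\o lner sequence, then so does $X$}.

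Before the main construction I would record two soft consequences of bounded geometry. Let $D$ be a common degree bound for $X$ and $X'$, put $b(r):=(r+1)D^{r}$ (a uniform bound for the cardinality of any ball of radius $r$ in either graph), and write $N_r(S):=\{x\in V':d_{X'}(x,S)\le r\}$, so that $\card(N_r(S))\le b(r)\card(S)$. (i) $\varphi$ is uniformly finite-to-one: if $\varphi(v_1)=\varphi(v_2)$ the lower rough-isometry estimate forces $d_X(v_1,v_2)\le B$, so $\varphi$ is at most $M:=b(B)$-to-one and $\card(\varphi^{-1}(S))\le M\,\card(S)$ for all $S\subset V'$. (ii) $\varphi(V)$ is $B$-dense in $V'$, so every subset of $V'$ is contained in $N_B(\varphi(V))$.

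Given a F\o lner sequence $(A'_n)$ for $X'$ I would set $A_n:=\varphi^{-1}(A'_n)\subset V$ and establish two estimates. \emph{Boundary bound:} if $v\in dA_n$ then $\varphi(v)\in A'_n$, some neighbour $w\sim v$ has $\varphi(w)\notin A'_n$, and $d_{X'}(\varphi(v),\varphi(w))\le A+B$; walking along a geodesic in $X'$ from $\varphi(v)$ to $\varphi(w)$ exhibits a vertex of $dA'_n$ within distance $A+B$ of $\varphi(v)$, whence $\varphi(dA_n)\subset N_{A+B}(dA'_n)$ and, by (i), $\card(dA_n)\le M\,b(A+B)\,\card(dA'_n)$. \emph{Volume bound:} passing to the $B$-interior $(A'_n)^{\circ}:=\{v'\in A'_n:d_{X'}(v',V'\setminus A'_n)>B\}$, the same geodesic argument gives $A'_n\setminus (A'_n)^{\circ}\subset N_B(dA'_n)$, so $\card((A'_n)^{\circ})\ge \card(A'_n)-b(B)\card(dA'_n)$; and for $v'\in (A'_n)^{\circ}$ the $B$-ball about $v'$ lies inside $A'_n$ and, by (ii), contains some $\varphi(v)$, forcing $v\in A_n$, so $(A'_n)^{\circ}\subset N_B(\varphi(A_n))$ and $\card(A_n)\ge b(B)^{-1}\card((A'_n)^{\circ})$. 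Since $\card(dA'_n)/\card(A'_n)\to 0$, the correction term is negligible for large $n$, giving $\card(A_n)\ge (2b(B))^{-1}\card(A'_n)$ and
\[
\frac{\card(dA_n)}{\card(A_n)}\ \le\ 2\,M\,b(A+B)\,b(B)\,\frac{\card(dA'_n)}{\card(A'_n)}\ \longrightarrow\ 0 ,
\]
so $(A_n)$ is a F\o lner sequence for $X$. Applying this to the quasi-inverse $\psi$ yields the reverse implication, hence the claimed equivalence.

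The genuinely delicate point, and the one I would spend care on, is the volume bound $\card(A_n)\ge c\,\card(A'_n)$: since $\varphi$ is in general neither injective nor surjective, the raw preimage $\varphi^{-1}(A'_n)$ could be far too small (if $A'_n$ largely misses $\varphi(V)$) or unnecessarily large. The remedy is exactly the $B$-interior device above, which only controls the bulk of $A'_n$ and discards a shell of size $O(\card(dA'_n))$ --- and this is harmless precisely because $(A'_n)$ is already a F\o lner sequence, so the discarded shell is a lower-order correction. Everything else --- the finite-to-one property, the boundary estimate, and the quasi-inverse --- is routine bookkeeping with the rough-isometry constants $A,B$ and the bounded-geometry ball bounds $b(r)$.
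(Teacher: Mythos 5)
The paper does not prove this theorem at all: it is quoted verbatim from \cite[Theorem 4.7]{MR1743100}, so there is no in-paper argument to compare against. Your proof is correct and complete, and it follows the standard route (the one Woess's book also takes in spirit): translate non-amenability into the existence of a F\o lner sequence, pull such a sequence back through the rough isometry via $A_n:=\varphi^{-1}(A_n')$, and control both $\card(dA_n)$ from above and $\card(A_n)$ from below using only the rough-isometry constants and the uniform ball-volume bound $b(r)$ coming from bounded geometry. You correctly identify and handle the one genuinely non-routine point, namely that $\varphi^{-1}(A_n')$ could a priori be too small; the $B$-interior device, together with the observation that the discarded shell has size $O(\card(dA_n'))$ and is therefore negligible for a F\o lner sequence, closes that gap. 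The quasi-inverse reduction to a single implication and the finite-to-one estimate are also handled correctly.
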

The Cayley graph of a group provides the connection between groups
and graphs.
\begin{defn}
We say that a set $S\subset G$ is a \emph{symmetric
set of generators of the group} $G$ if $\left\langle S\right\rangle =G$
and if  $g^{-1}\in S$, for all  $g\in S$.  For a group
$G$ and a symmetric set of generators $S$, the \emph{Cayley graph
of $G$ with respect to $S$} is the graph with vertex set $G$ and
edge set $E:=\left\{ \left(g,g'\right)\in G\times G:g^{-1}g'\in S\right\} $.
We denote this graph by $X\left(G,S\right)$.
\end{defn}
Next proposition shows that amenability of groups and graphs is compatible
(\cite[Proposition 12.4]{MR1743100}).
\begin{prop}
\label{pro:groupamenable-iff-graphamenable}A finitely generated group
$G$ is amenable if and only if one (and hence  every) Cayley graph $X\left(G,S\right)$
of $G$ with respect to a finite symmetric set of generators $S\subset G$
is amenable.
\end{prop}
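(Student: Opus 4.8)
The plan is to establish the two halves of the statement separately: first that amenability of the Cayley graph $X(G,S)$ is independent of the chosen finite symmetric set of generators $S$, and then that it is equivalent to amenability of $G$ in the sense of Definition \ref{def:amenable-group}. At the outset I would record that $X(G,S)$ always has bounded geometry: it is connected because $S$ generates $G$, and every vertex has degree at most $\card(S)<\infty$.

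\emph{Independence of $S$.} Let $S,S'$ be two finite symmetric generating sets of $G$. Since $S'$ generates $G$, each $s\in S$ is represented by an $S'$-word; putting $M:=\max_{s\in S}d_{X(G,S')}(e,s)$ and replacing, along an $X(G,S)$-geodesic from $g$ to $g'$, each letter from $S$ by an $S'$-word, I get $d_{X(G,S')}(g,g')\le M\,d_{X(G,S)}(g,g')$, and symmetrically $d_{X(G,S)}(g,g')\le M'\,d_{X(G,S')}(g,g')$ for a suitable $M'$. Hence the identity map on $G$ is a rough isometry between $X(G,S)$ and $X(G,S')$ (additive constant $0$, and onto), so Theorem \ref{thm:amenability-is-roughisometry-invariant} shows that one of the two graphs is amenable if and only if the other is. From here on I fix one finite symmetric generating set $S$.

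\emph{Reduction to a F\o lner condition.} For the equivalence with group amenability I would invoke F\o lner's classical characterisation: $G$ is amenable in the sense of Definition \ref{def:amenable-group} if and only if there is a sequence of nonempty finite sets $F_n\subset G$ with $\card(F_ns\,\triangle\,F_n)=o(\card(F_n))$ for every $s\in S$ (the left- and right-hand forms of F\o lner's condition are interchanged by $F\mapsto F^{-1}$, so either one characterises amenability). The easy direction takes a weak-$*$ cluster point in $\ell^\infty(G)^*$ of the normalised counting measures on the $F_n$ as an invariant mean; the converse, extracting almost-invariant finite sets from an invariant mean, is the substantial classical half, which I would cite (e.g.\ from \cite{MR1743100}) rather than reprove. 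It then remains to compare the graph boundary $dF$ in $X(G,S)$ with these symmetric differences.

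\emph{The combinatorial comparison.} The neighbours of $g$ in $X(G,S)$ are exactly the elements $gs$, $s\in S$, so $g\in dF$ precisely when $gs\notin F$ for some $s\in S$; thus $dF=\bigcup_{s\in S}\bigl(F\setminus Fs^{-1}\bigr)$, and for fixed $s$ one has $\card(F\setminus Fs^{-1})=\tfrac12\card(F\triangle Fs^{-1})=\tfrac12\card(F\triangle Fs)$, using $\card(F)=\card(Fs^{-1})$ and $F\triangle Fs=(F\triangle Fs^{-1})s$. This yields, for every nonempty finite $F\subset G$,
\[
\tfrac12\,\max_{s\in S}\card(F\triangle Fs)\ \le\ \card(dF)\ \le\ \tfrac12\sum_{s\in S}\card(F\triangle Fs)\ \le\ \tfrac{\card(S)}{2}\,\max_{s\in S}\card(F\triangle Fs).
\]
Hence a sequence of nonempty finite sets $F_n$ satisfies $\card(dF_n)/\card(F_n)\to 0$ if and only if $\card(F_ns\,\triangle\,F_n)/\card(F_n)\to0$ for every $s\in S$. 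The existence of such a sequence on the left means exactly that the isoperimetric constant of $X(G,S)$ vanishes, i.e.\ that $X(G,S)$ is amenable (cf.\ Definition \ref{def:amenable-graph}); on the right, by the F\o lner criterion, it means exactly that $G$ is amenable. Combined with the first step, this proves the proposition. The only genuinely substantial ingredient is F\o lner's theorem; were one to insist on a fully self-contained argument, re-deriving the existence of F\o lner sets from a left-invariant mean would be the main obstacle.
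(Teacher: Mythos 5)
The paper offers no proof of this proposition at all: it simply cites \cite[Proposition~12.4]{MR1743100}, so any written-out argument is by definition a different route, and yours is a correct and standard one. The two halves are both sound. For the independence of $S$, the identity map on $G$ is indeed a rough isometry between $X(G,S)$ and $X(G,S')$ (with bounded geometry coming from finiteness of the generating sets), and Theorem~\ref{thm:amenability-is-roughisometry-invariant} finishes that step. For the equivalence with group amenability, the combinatorial identity $dF=\bigcup_{s\in S}(F\setminus Fs^{-1})$ together with $\card(F\setminus Fs^{-1})=\tfrac12\card(F\triangle Fs)$ is correct and gives the two-sided comparison you state, so vanishing of the isoperimetric ratio $\card(dF_n)/\card(F_n)$ is equivalent to the (right) F\o lner condition over the finite set $S$, which for a group generated by $S$ is equivalent to amenability; delegating the mean-to-F\o lner-sets implication to the literature is entirely reasonable given that the paper delegates the whole proposition. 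Two small remarks: first, Definition~\ref{def:amenable-graph} as literally printed in the paper asserts the existence of $\kappa$ with $\card(A)\le\kappa\card(dA)$, which is the strong isoperimetric inequality characterizing \emph{non}-amenability; your reading (amenable $=$ the isoperimetric constant vanishes, i.e.\ no such $\kappa$ exists) is the one intended and the one consistent with Theorem~\ref{thm:woess-amenability-randomwalk-characterization}, so your argument matches the correct convention. Second, when invoking F\o lner's criterion restricted to $s\in S$ you are implicitly using that almost-invariance under a generating set propagates to all of $G$ via $\card(F\triangle Fgh)\le\card(F\triangle Fg)+\card(F\triangle Fh)$; this is standard but worth a sentence if the proof were to be included. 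What your approach buys is a self-contained argument modulo only F\o lner's theorem, whereas the paper buys brevity by outsourcing everything to Woess.
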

Let us now relate amenability of graphs to spectral properties
of transition operators.
\begin{defn}
For a finite or countably infinite discrete vertex
set $V$, we say that the matrix $P=\left(p\left(v,w\right)\right)\in\R^{V\times V}$
is a \emph{transition matrix on $V$} if $p\left(v,w\right)\ge0$
and $\sum_{u\in V}p\left(v,u\right)=1$, for all $v,w\in V$. A Borel
measure $\nu$ supported on $V$ is\emph{ $P$-invariant }if we have
$\sum_{u\in V}\nu\left(u\right)p\left(u,w\right)=\nu\left(w\right)$,
for all $w\in V$.
\end{defn}
The following definitions introduce the concept of a transition matrix to be adapted to a graph (see \cite[(1.20, 1.21)]{MR1743100}).
\begin{defn}
\label{def:uniformly-irred-bounded-range}For
a connected graph $X=\left(V,E\right)$ and a transition matrix $P=\left(p\left(v,w\right)\right)\in\R^{V\times V}$
on $V$, we say that \emph{$P$ is uniformly irreducible with respect
to $X$} if there exist $K\in\N$ and $\epsilon>0$ such that
for all $v,w\in V$ satisfying $v\sim w$ there exists $k\in\N$ with
$k\le K$ such that $p^{\left(k\right)}\left(v,w\right)\ge\epsilon$.
We say that $P$ has \emph{bounded range with respect to $X$} if
there exists $R>0$ such that $p\left(v,w\right)=0$ whenever $d_{X}\left(v,w\right)>R$.
\end{defn}
Let $P=\left(p\left(v,w\right)\right)\in\R^{V\times V}$ be a transition
matrix on $V$ with $P$-invariant Borel measure $\nu$ on $V$. It
is well-known that $P$ defines a linear operator on $\ell^{2}\left(V,\nu\right)$
through the equations
\[
Pf\left(v\right):=\sum_{w\in V}p\left(v,w\right)f\left(w\right),\quad\mbox{for all }v\in V\mbox{ and }f\in\ell^{2}\left(V,\nu\right)
\]
and that the norm of this operator is less or equal to one. For the
spectral radius $\rho\left(P\right)$ of the operator $P$ on $\ell^{2}\left(V,\nu\right)$
we cite the following result from \cite{MR2338235}. This result has
a rather long history going back to \cite{MR0109367,MR0112053} (see
also \cite{MR0159230,MR678175,MR743744,MR894523,MR938257,MR943998,MR1245225,MR1743100}).
\begin{thm}
[Ortner, Woess]\label{thm:woess-amenability-randomwalk-characterization}Let
$X=\left(V,E\right)$ be a graph with bounded geometry and let $P$
denote a transition matrix on $V$ such that $P$ is uniformly irreducible
with respect to $X$ and has bounded range with respect
to $X$. If there exists a $P$-invariant Borel measure $\nu$ on
$V$ and a constant $C\ge1$ such that $C^{-1}\le\nu\left(w\right)\le C$,
for all $w\in V$, then we have that $\rho\left(P\right)=1$  if and only if $X$ is amenable.

\end{thm}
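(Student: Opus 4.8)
I would prove the two implications separately, each time reducing a statement about the operator $P$ on $\ell^{2}(V,\nu)$ to the isoperimetric geometry of $X$. Two facts are used throughout. First, since $\nu$ is $P$-invariant, Jensen's inequality applied to the probability vectors $p(v,\cdot)$ gives $\Vert Pf\Vert_{2}^{2}=\sum_{v}\nu(v)\bigl(\sum_{w}p(v,w)f(w)\bigr)^{2}\le\sum_{w}\nu(w)f(w)^{2}=\Vert f\Vert_{2}^{2}$, so $\rho(P)\le\Vert P\Vert\le1$; moreover the reversed chain $P^{*}$, with kernel $p^{*}(v,w)=\nu(w)p(w,v)/\nu(v)$, is again a transition matrix with invariant measure $\nu$, uniformly irreducible and of bounded range with respect to $X$. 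Second, since $C^{-1}\le\nu(w)\le C$, the Hilbert space $\ell^{2}(V,\nu)$ has norm uniformly comparable to that of $\ell^{2}(V)$ with counting measure, so cardinalities of vertex sets may be substituted for $\nu$-masses.

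For the implication ``$X$ amenable $\Rightarrow\rho(P)=1$'' I would exhibit $1$ as an approximate eigenvalue of $P$. By Definition~\ref{def:amenable-graph}, $X$ admits a F\o lner sequence, that is, finite sets $A_{n}\subset V$ with $\card(dA_{n})/\card(A_{n})\to0$. Let $R$ be a bound on the range of $P$ and $\Delta:=\sup_{v\in V}\deg(v)<\infty$. Since $P$ has range $R$, the function $(I-P)\1_{A_{n}}$ vanishes at every vertex whose $d_{X}$-ball of radius $R$ is contained entirely in $A_{n}$ or entirely in $V\setminus A_{n}$, hence is supported on the set of vertices within $d_{X}$-distance $R$ of $dA_{n}$, which by bounded geometry has at most $C_{\Delta,R}\,\card(dA_{n})$ elements for a constant $C_{\Delta,R}$ depending only on $\Delta$ and $R$, while $\bigl|(I-P)\1_{A_{n}}\bigr|\le1$ pointwise. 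Therefore $\bigl\Vert(I-P)\1_{A_{n}}\bigr\Vert_{2}^{2}\big/\bigl\Vert\1_{A_{n}}\bigr\Vert_{2}^{2}\le C^{2}C_{\Delta,R}\,\card(dA_{n})/\card(A_{n})\to0$, so $1\in\sigma(P)$ and thus $\rho(P)\ge1$; together with $\rho(P)\le1$ this gives $\rho(P)=1$.

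For the converse I would argue the contrapositive: assuming $X$ non-amenable, so that there is $\kappa>0$ with $\card(A)\le\kappa\,\card(dA)$ for every finite $A\subset V$, I would show $\rho(P)<1$ in three steps. Step~1: transfer the isoperimetric inequality of $X$ to the chain. Using uniform irreducibility (for $v\sim w$, $p^{(k)}(v,w)\ge\epsilon$ for some $k\le K$), bounded range, and $\Delta<\infty$, a routine $K$-step comparison produces $h>0$ with $\sum_{v\in A}\sum_{w\notin A}\nu(v)\,p(v,w)\ge h\,\nu(A)$ for all finite $A\subset V$, and likewise for $P^{*}$ (the $P^{*}$-flow out of $A$ equals the $P$-flow out of $A$, by $\nu$-invariance). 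Step~2: a discrete Cheeger/co-area estimate. Writing $S:=\tfrac12(P+P^{*})$, which is self-adjoint on $\ell^{2}(V,\nu)$ and, being a transition matrix, reversible with respect to $\nu$, with Dirichlet form $\langle(I-S)f,f\rangle_{\nu}=\tfrac12\sum_{v,w}\nu(v)\,s(v,w)\,(f(v)-f(w))^{2}$, one applies the inequality of Step~1 to the super-level sets of $|f|$ and integrates over the levels to obtain a Poincar\'e inequality $\langle(I-S)f,f\rangle_{\nu}\ge\delta\,\Vert f\Vert_{2}^{2}$ for all finitely supported $f$, with $\delta=\delta(h)>0$; hence $\sup\sigma(S)\le1-\delta$. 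Step~3: conclude $\rho(P)<1$. Since $\rho(P)\le\Vert P^{n_{0}}\Vert_{\ell^{2}(\nu)}^{1/n_{0}}$ for every $n_{0}$, it is enough to find one $n_{0}$ with $\Vert P^{n_{0}}\Vert<1$; and $\Vert P^{n_{0}}\Vert^{2}=\Vert(P^{*})^{n_{0}}P^{n_{0}}\Vert$, where $(P^{*})^{n_{0}}P^{n_{0}}$ is again a self-adjoint transition matrix, reversible with respect to $\nu$ and of bounded range, so Step~2 applies to it, and its chain-level isoperimetric constant is positive for suitable $n_{0}$ --- a consequence of non-amenability of $X$ together with uniform irreducibility and bounded range, after passing to a lazy modification to suppress periodicity --- giving $\Vert(P^{*})^{n_{0}}P^{n_{0}}\Vert<1$.

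The genuinely delicate part is this second direction, and within it Step~3. For a non-reversible $P$ the $\ell^{2}$-spectral radius is strictly smaller than the operator norm and is \emph{not} controlled by $\Vert S\Vert$ alone --- a Euclidean rotation has $\rho=1$ while $\Vert\tfrac12(P+P^{\mathsf T})\Vert=|\cos\theta|<1$ --- so a purely numerical-range argument would only yield $\mathrm{Re}\,\sigma(P)\le1-\delta$, which together with $\sigma(P)\subset\overline{\mathbb D}$ still allows $\rho(P)=1$. Excluding this forces one to exploit the Markov structure (positivity and stochasticity of all the kernels, which is what underlies the return-probability description of the spectral radius) in order to produce genuine $\ell^{2}$-norm decay $\Vert P^{n_{0}}\Vert<1$. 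The hypotheses of bounded geometry, bounded range, uniform irreducibility, and the two-sided bound on $\nu$ are needed precisely to keep every comparison constant --- above all the isoperimetric constant $h$ and its analogue for the iterated chain --- uniform and bounded away from the degenerate value; securing this uniformity, rather than any single estimate, is the crux.
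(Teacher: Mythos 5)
First, a point of reference: the paper does not prove this theorem at all --- it is quoted from Ortner and Woess \cite{MR2338235} (with antecedents in Kesten, Dodziuk, Gerl and Woess's book) --- so there is no internal proof to compare yours with. On its own terms, your first direction (amenable $\Rightarrow\rho(P)=1$) is correct and standard: F\o lner sets give approximate eigenfunctions $\1_{A_{n}}$ for the eigenvalue $1$, using bounded range, bounded geometry and $C^{-1}\le\nu\le C$ (you are, rightly, using the usual F\o lner meaning of amenability; note that Definition \ref{def:amenable-graph} as printed states the opposite, strong isoperimetric, inequality --- evidently a typo). Steps 1 and 2 of the converse are also sound: the one-step flow bound follows from uniform irreducibility together with the stationarity union bound $\sum_{v\in A,\,w\notin A}\nu(v)p^{(k)}(v,w)\le k\sum_{v\in A,\,w\notin A}\nu(v)p(v,w)$, and the co-area/Cheeger argument then yields $\sup\sigma(S)\le1-\delta$ for $S=\tfrac12(P+P^{*})$.

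The genuine gap is exactly where you locate the crux, Step 3. The decisive claim --- that $(P^{*})^{n_{0}}P^{n_{0}}$ satisfies a chain-level isoperimetric inequality for some $n_{0}$ --- is asserted, not derived, and it does not follow routinely from the hypotheses: for a near-deterministic $\nu$-preserving kernel, $(P^{*})^{n_{0}}P^{n_{0}}$ can be essentially the identity for every $n_{0}$, and nothing in Steps 1--2 rules this out (it is excluded only by the theorem one is trying to prove). Your fallback, lazifying to $P'=\tfrac12(I+P)$, does make the claim true with $n_{0}=1$, since $(P')^{*}P'\ge\tfrac14(P+P^{*})$ entrywise and so inherits the flow bound; but this bounds $\rho(P')$, not $\rho(P)$, and for a general contraction $\rho(\tfrac12(I+P))<1$ does not imply $\rho(P)<1$. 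The missing ingredient for the transfer is a Perron--Frobenius-type fact: the spectral radius of a positive bounded operator on $\ell^{2}(V,\nu)$ lies in its spectrum. Once that is invoked, the detour through powers and lazification is unnecessary anyway: Step 2 already shows that the numerical range, hence the spectrum, of $P$ is contained in $\left\{ z:\mathrm{Re}\,z\le1-\delta\right\} $, and $\rho(P)\in\sigma(P)\cap[0,\infty)$ then gives $\rho(P)\le1-\delta$ directly. (Your closing diagnosis that a ``return-probability description of the spectral radius'' is the lever is also off the mark: for non-reversible $P$ the $\ell^{2}(\nu)$-spectral radius is not given by return probabilities, as the biased walk on $\Z$ shows; positivity of the operator is the structure that must be used.) As written, then, the hard implication is not established.
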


\section{Thermodynamic Formalism for Group-extended Markov Systems\label{sec:Thermodynamic-Formalism-grpextension}}

Throughout this section our setting is as follows.
\begin{enumerate}
\item $\Sigma$ is a Markov shift with finite alphabet $I$ and left-shift map
$\sigma:\Sigma\rightarrow\Sigma$.
\item $G$ is a countable discrete group $G$ with Haar measure (counting
measure) $\lambda.$
\item $\Psi:I^{*}\rightarrow G$ is a semigroup homomorphism such that the
following property holds. For all $a,b\in I$ there exists $\gamma\in\Psi^{-1}\left\{ \id\right\} \cap\Sigma^{*}\cup\left\{ \emptyword\right\} $
such that $a\gamma b\in\Sigma^{*}$.
\item $\varphi:\Sigma\rightarrow\R$ denotes a H\"older continuous potential
with $\sigma$-invariant Gibbs measure $\mu_{\varphi}$, $\mathcal{L}_{\varphi}:C_b(\Sigma)\rightarrow C_b(\Sigma) $ denotes the Perron-Frobenius operator associated to $\varphi$, and $h:\Sigma \rightarrow  \R$ denotes the unique H\"older continuous eigenfunction of $\mathcal{L}_{\varphi}$ with corresponding eigenvalue $\e^{\mathcal{P}(\varphi)}$ whose existence is guaranteed by Theorem \ref{thm:perron-frobenius-thm-urbanski}.
\end{enumerate}
In this section we will address the following problem. \begin{problem}
\label{mainproblem}How is  amenability of $G$ reflected in the relationship
between $\mathcal{P}\left(\varphi,\Psi^{-1}\left\{ \id\right\} \cap\Sigma^{*}\right)$
and $\mathcal{P}\left(\varphi\right)$?
\end{problem}
It turns out that in order to investigate Problem \ref{mainproblem}
it is helpful to consider group-extended Markov systems (defined below), which were
studied in (\cite{MR1803461,MR1906436}) for certain abelian groups.
\begin{defn}
\label{def:skew-product-dynamics}The
skew-product dynamics on  $\left(\Sigma\times G,\sigma\rtimes\Psi\right)$, for which  the transformation $\sigma\rtimes\Psi:\Sigma\times G\rightarrow\Sigma\times G$ is given by
\[
\left(\sigma\rtimes\Psi\right)\left(\omega,g\right):=\left(\sigma\left(\omega\right),g\Psi\left(\omega_{1}\right)\right),\ \text{ for all }
\left(\omega,g\right)\in\Sigma\times G,
\]
is called a \emph{group-extended Markov system}. We let $\pi_{1}:\Sigma\times G\rightarrow\Sigma\mbox{ and }\pi_{2}:\Sigma\times G\rightarrow G$
denote the projections to the first and to the second factor of $\Sigma\times G$. \end{defn}
\begin{rem*}
Throughout, we assume that  $\Sigma\times G$ is equipped with the product topology. Note that by item
(3) of our standing assumptions we have that the group-extended Markov
system $\left(\Sigma\times G,\sigma\rtimes\Psi\right)$ is topologically
transitive if and only if $\Psi\left(\Sigma^{*}\right)=G$.
\end{rem*}

\subsection{Perron-Frobenius Theory\label{sub:Perron-Frobenius-Theory}}

In this section,  we investigate the relationship
between the pressure $\mathcal{P}\left(\varphi,\Psi^{-1}\left\{ \id\right\} \cap\Sigma^{*}\right)$
and the spectral radius of a Perron-Frobenius operator associated
to $\left(\Sigma\times G,\sigma\rtimes\Psi\right)$, which will be introduced in Definition \ref{def:Koopman-M-PF} below. Combining this with
results concerning transition operators of random walks on graphs, which will be given in Section
\ref{sec:Random-Walks-Application}, we are able to give a  complete answer
to Problem \ref{mainproblem} for  potentials $\varphi$ depending only on a finite number
of coordinates (see Theorem \ref{thm:amenability-dichotomy-markov}).

Let us begin by stating the following lemma. The proof is straightforward and is thus left to the reader.
\begin{lem}
\label{lem:mu-phi-prod-counting-is-invariant}The measure \textup{\emph{$\mu_{\varphi}\times\lambda$
is $\left(\sigma\rtimes\Psi\right)$-invariant. }}
\end{lem}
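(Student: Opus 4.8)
The plan is to reduce the assertion to a one-line computation on cylinder-times-point sets and then appeal to the $\sigma$-invariance of $\mu_{\varphi}$ together with the translation invariance of the counting measure $\lambda$.

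First I would note that, since $G$ is countable and $\mu_{\varphi}$ is a probability measure, $\mu_{\varphi}\times\lambda$ is $\sigma$-finite, and that the family $\mathcal{G}:=\{[\omega]\times\{g\}:\omega\in\Sigma^{*}\cup\{\emptyword\},\ g\in G\}$ is a $\pi$-system consisting of sets of finite measure which generates the Borel $\sigma$-algebra on $\Sigma\times G$ and whose members cover $\Sigma\times G$. By the standard uniqueness theorem for $\sigma$-finite measures it therefore suffices to verify that $(\mu_{\varphi}\times\lambda)\bigl((\sigma\rtimes\Psi)^{-1}B\bigr)=(\mu_{\varphi}\times\lambda)(B)$ for every $B\in\mathcal{G}$.

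Next, fix $\omega\in\Sigma^{*}$ with $|\omega|=n$ and $g\in G$ (the degenerate case $\omega=\emptyword$, where $[\emptyword]=\Sigma$, being handled identically). Unwinding Definition \ref{def:skew-product-dynamics}, a pair $(\tau,h)\in\Sigma\times G$ lies in $(\sigma\rtimes\Psi)^{-1}([\omega]\times\{g\})$ exactly when $\sigma(\tau)\in[\omega]$ and $h\Psi(\tau_{1})=g$, that is, when $\tau_{1}\omega\in\Sigma^{n+1}$ with $\tau_{|n+1}=\tau_{1}\omega$ and $h=g\,\Psi(\tau_{1})^{-1}$. Hence
\[
(\sigma\rtimes\Psi)^{-1}\bigl([\omega]\times\{g\}\bigr)=\bigsqcup_{a\in I:\,a\omega\in\Sigma^{*}}[a\omega]\times\bigl\{g\,\Psi(a)^{-1}\bigr\},
\]
a disjoint union. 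Since $\lambda$ is the counting measure, each factor $\{g\,\Psi(a)^{-1}\}$ has $\lambda$-measure $1$, so the left-hand side of the identity to be proved equals $\sum_{a:\,a\omega\in\Sigma^{*}}\mu_{\varphi}([a\omega])$. On the other hand $\sigma^{-1}[\omega]=\bigsqcup_{a:\,a\omega\in\Sigma^{*}}[a\omega]$, so this sum equals $\mu_{\varphi}(\sigma^{-1}[\omega])$, which equals $\mu_{\varphi}([\omega])$ because $\mu_{\varphi}$ is $\sigma$-invariant by Theorem \ref{thm:perron-frobenius-thm-urbanski}. As $(\mu_{\varphi}\times\lambda)([\omega]\times\{g\})=\mu_{\varphi}([\omega])$ as well, the two sides agree and the lemma follows.

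There is no real obstacle here; the only points requiring a little care are the routine reduction from a $\pi$-system of finite-measure sets to all Borel sets in the $\sigma$-finite setting (note that $\mu_{\varphi}\times\lambda$ is an infinite measure when $G$ is infinite), and the bookkeeping identity $\sigma^{-1}[\omega]=\bigsqcup_{a\omega\in\Sigma^{*}}[a\omega]$. An equivalent route would be to test against bounded functions $f$ on $\Sigma\times G$ and use the normalized transfer operator $\mathcal{L}_{\widehat\varphi}$ with $\widehat\varphi=\varphi+\log h-\log(h\circ\sigma)-\mathcal{P}(\varphi)$, together with $\mathcal{L}_{\widehat\varphi}1=1$, the identity $\int\mathcal{L}_{\widehat\varphi}F\,d\mu_{\varphi}=\int F\,d\mu_{\varphi}$, and the fact that right translation by $\Psi(a)$ permutes $G$; but the cylinder computation above is the most transparent.
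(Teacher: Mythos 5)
Your proof is correct, and it is precisely the routine cylinder-set verification that the paper intends when it leaves this lemma to the reader: decompose $(\sigma\rtimes\Psi)^{-1}([\omega]\times\{g\})$ into the disjoint sets $[a\omega]\times\{g\Psi(a)^{-1}\}$, use that $\lambda$ is counting measure, and invoke the $\sigma$-invariance of $\mu_{\varphi}$ from Theorem \ref{thm:perron-frobenius-thm-urbanski} together with the uniqueness theorem for $\sigma$-finite measures on a generating $\pi$-system. No gaps.
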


Next, we define the Koopman operator (\cite{koopman31,MR1244104})
and the Perron-Frobenius operator associated to the group-extended
Markov system $\left(\Sigma\times G,\sigma\rtimes\Psi\right)$. Note
that the previous lemma ensures that these operators are well-defined. We denote by $L^{2}\left(\Sigma\times G,\mu_{\varphi}\times\lambda\right)$ the Hilbert space of real-valued functions on $\Sigma\times G$ which are square-integrable with respect to $\mu_{\varphi}\times \lambda$.
\begin{defn}
\label{def:Koopman-M-PF}The \emph{Koopman operator} $U:L^{2}\left(\Sigma\times G,\mu_{\varphi}\times\lambda\right)\rightarrow L^{2}\left(\Sigma\times G,\mu_{\varphi}\times\lambda\right)$ is given by
\[
U\left(f\right):= f\circ\left(\sigma\rtimes\Psi\right),
\]
and the \emph{Perron-Frobenius operator} $\mathcal{L}_{\varphi \circ \pi_1}:L^{2}\left(\Sigma\times G,\mu_{\varphi}\times\lambda\right)\rightarrow L^{2}\left(\Sigma\times G,\mu_{\varphi}\times\lambda\right)$
is given by
\[
\mathcal{L}_{\varphi \circ \pi_1}:=  \e^{\mathcal{P}\left(\varphi\right)}M_{h\circ\pi_{1}}\circ U^{*}\circ\left(M_{h\circ\pi_{1}}\right)^{-1},
\]
where the \emph{multiplication operator} $M_{h\circ\pi_{1}}:L^{2}\left(\Sigma\times G,\mu_{\varphi}\times\lambda\right)\rightarrow L^{2}\left(\Sigma\times G,\mu_{\varphi}\times\lambda\right)$
is given by
\[
M_{h\circ\pi_{1}}\left(f\right):=f\cdot\left(h\circ\pi_{1}\right)
\]
and $U^{*}:L^{2}\left(\Sigma\times G,\mu_{\varphi}\times\lambda\right)\rightarrow L^{2}\left(\Sigma\times G,\mu_{\varphi}\times\lambda\right)$ denotes the adjoint of $U$.
\end{defn}
The proof of the next lemma is straightforward and therefore omitted.
\begin{lem}
\label{fac:pf-fact}For the bounded linear operators  $U,\mathcal{L}_{\varphi \circ \pi_1}:L^{2}\left(\Sigma\times G,\mu_{\varphi}\times\lambda\right)\rightarrow L^{2}\left(\Sigma\times G,\mu_{\varphi}\times\lambda\right)$, 
the following properties hold.
\begin{enumerate}
\item \label{enu:UisIsometry}$U$ is an isometry, so we have that $\Vert U\Vert=\rho\left(U\right)=1$, where $\rho$ denotes the spectral radius of $U$. 
\item \label{enu:Uadjoint-is-PF}For $f\in L^{2}\left(\Sigma\times G,\mu_{\varphi}\times\lambda\right)$
and $\left(\mu_{\varphi}\times\lambda\right)$-almost every $\left(\omega,g\right)\in\Sigma\times G$
we have that \textup{
\[
\mathcal{L}_{\varphi\circ\pi_{1}}\left(f\right)\left(\omega,g\right)=\sum_{i\in I:i\omega_{1}\in\Sigma^{2}}\e{}^{\varphi\left(i\omega\right)}f\left(i\omega,g\Psi\left(i\right)^{-1}\right).
\]
}
\item \label{enu:.pf-fact-spectralradius-pressure}For the spectral radius
of $\mathcal{L}_{\varphi\circ\pi_{1}}$ we obtain that $\rho\left(\mathcal{L}_{\varphi\circ\pi_{1}}\right)=\e^{\mathcal{P}\left(\varphi\right)}$.
\end{enumerate}
\end{lem}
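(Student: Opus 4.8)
The plan is to establish the three items in turn, feeding item~(1) into item~(3), with the explicit transfer-operator description of $U^*$ forming the technical core of item~(2).

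\emph{Item (1).} By Lemma~\ref{lem:mu-phi-prod-counting-is-invariant} the measure $\mu_\varphi\times\lambda$ is $(\sigma\rtimes\Psi)$-invariant, so for every $f\in L^2(\Sigma\times G,\mu_\varphi\times\lambda)$ the change-of-variables formula for a measure-preserving map gives
\[
\Vert U f\Vert^2=\int\bigl(f\circ(\sigma\rtimes\Psi)\bigr)^2\,d(\mu_\varphi\times\lambda)=\int f^2\,d(\mu_\varphi\times\lambda)=\Vert f\Vert^2 .
\]
Hence $U$ is an isometry and $\Vert U\Vert=1$; iterating, each $U^n$ is an isometry, so $\Vert U^n\Vert=1$ and $\rho(U)=\lim_n\Vert U^n\Vert^{1/n}=1$. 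Applying the same estimate to $U^*$ (via $\Vert(U^*)^n\Vert=\Vert(U^n)^*\Vert=\Vert U^n\Vert=1$) also yields $\rho(U^*)=1$, which I will reuse in item~(3).

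\emph{Item (2).} First I would record the classical fact that on $L^2(\Sigma,\mu_\varphi)$ the adjoint of the Koopman operator $g\mapsto g\circ\sigma$ is the $h$-normalised transfer operator $g\mapsto\e^{-\mathcal P(\varphi)}h^{-1}\mathcal L_\varphi(hg)$; this follows from the eigenmeasure identity $\int\mathcal L_\varphi(f)\,d\mu=\e^{\mathcal P(\varphi)}\int f\,d\mu$ in Theorem~\ref{thm:perron-frobenius-thm-urbanski} together with $\mu_\varphi=h\,d\mu$, and is equivalent to the Jacobian rule $d\mu_\varphi(i\tau)=\frac{h(i\tau)}{h(\tau)}\e^{\varphi(i\tau)-\mathcal P(\varphi)}\,d\mu_\varphi(\tau)$ for the inverse branch $\tau\mapsto i\tau$ of $\sigma$ on $\{\tau:i\tau\in\Sigma\}$. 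Then I would expand $\langle Uf,\psi\rangle$, split the $\Sigma$-integral according to the first coordinate $\omega_1=i$, substitute $\omega=i\tau$ with this Jacobian, and relabel the group coordinate by $x:=g\Psi(i)$, which gives
\[
U^*\psi(\tau,x)=\e^{-\mathcal P(\varphi)}\sum_{i\in I:\, i\tau_1\in\Sigma^2}\e^{\varphi(i\tau)}\frac{h(i\tau)}{h(\tau)}\,\psi\bigl(i\tau,\,x\Psi(i)^{-1}\bigr);
\]
interchanging the finite sum over $i$ with the integral and the sum over $G$ is harmless since $I$ is finite. Feeding this into $\mathcal L_{\varphi\circ\pi_1}=\e^{\mathcal P(\varphi)}M_{h\circ\pi_1}\circ U^*\circ(M_{h\circ\pi_1})^{-1}$, the factors $h(i\tau)$, $h(\tau)$ and $\e^{\pm\mathcal P(\varphi)}$ cancel, and using $\{i:i\tau\in\Sigma\}=\{i:i\tau_1\in\Sigma^2\}$ the claimed formula for $\mathcal L_{\varphi\circ\pi_1}(f)$ drops out.

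\emph{Item (3).} Since $I$ is finite, $\Sigma$ is compact, so the continuous positive eigenfunction $h$ is bounded away from $0$ and from $\infty$; hence $M_{h\circ\pi_1}$ is a bounded operator with bounded inverse $M_{1/(h\circ\pi_1)}$, and $\mathcal L_{\varphi\circ\pi_1}$ is in particular bounded. Consequently $\mathcal L_{\varphi\circ\pi_1}$ is similar to $\e^{\mathcal P(\varphi)}U^*$, and conjugation by a boundedly invertible operator preserves the spectrum, hence the spectral radius; combining with $\rho(U^*)=1$ from item~(1) gives $\rho(\mathcal L_{\varphi\circ\pi_1})=\e^{\mathcal P(\varphi)}\rho(U^*)=\e^{\mathcal P(\varphi)}$.

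The only step requiring genuine care is the adjoint computation in item~(2): one must correctly handle the Radon--Nikodym derivative of $\mu_\varphi$ under the inverse branches of $\sigma$ (equivalently, the passage from $\mathcal L_\varphi$ to its $h$-conjugated, measure-preserving normalisation) and the twist of the $G$-coordinate by the cocycle $\Psi$. Everything else --- the isometry property, the cancellation of the $h$- and pressure-factors, and the spectral-radius identity --- is routine once the finiteness of $I$ removes all convergence concerns.
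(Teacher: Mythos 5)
Your proof is correct, and since the paper explicitly omits the argument as ``straightforward'', what you have written is precisely the routine verification the author had in mind: invariance of $\mu_{\varphi}\times\lambda$ for the isometry claim, the Jacobian rule $d\mu_{\varphi}(i\tau)=\frac{h(i\tau)}{h(\tau)}\e^{\varphi(i\tau)-\mathcal{P}(\varphi)}\,d\mu_{\varphi}(\tau)$ for the adjoint computation, and similarity via the boundedly invertible $M_{h\circ\pi_{1}}$ (using compactness of $\Sigma$ to bound $h$ away from $0$ and $\infty$) for the spectral radius. All three steps, including the deduction $\rho(U^{*})=1$ from $\Vert (U^{*})^{n}\Vert=\Vert U^{n}\Vert=1$ and Gelfand's formula, check out.
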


\global\long\def\pr#1{\1_{\left\{  \pi_{2}=#1\right\}  }}

\begin{rem*} The representation of $\mathcal{L}_{\varphi\circ\pi_{1}}$ in Lemma  \ref{fac:pf-fact} (\ref{enu:Uadjoint-is-PF}) extends Definition \ref{def:perron-frobenius}  of the Perron-Frobenius operator for Markov shifts with a finite alphabet.
\end{rem*}
The next lemma gives relationships between $\mathcal{P}\left(\varphi,\Psi^{-1}\left\{ \id\right\} \cap\Sigma^{*}\right)$
and $\mathcal{L}_{\varphi\circ\pi_{1}}$. Before stating the lemma, let us fix some notation. We write $\1_{A}$ for the
characteristic function of a set $A$ and we use $\left\{ \pi_{2}=g\right\} $
to denote the set $\pi_{2}^{-1}\left\{ g\right\}$,  for each $g\in G$. Further, let $\mathcal{B}\left(\Sigma\times G\right)$ denote the Borel $\sigma$-algebra on $\Sigma \times G$.
\begin{lem}
\label{lem:perronfrobenius-pressure}For all sets $A,B\in\mathcal{B}\left(\Sigma\times G\right)$
and for each $n\in\N$ we have that 
\[
\frac{\min h}{\max h}\mu_{\varphi}\left(A\cap\left(\sigma\rtimes\Psi\right)^{-n}\left(B\right)\right)\le\e^{-n\mathcal{P}\left(\varphi\right)}\left(\mathcal{L}_{\varphi\circ\pi_{1}}^{n}\left(\1_{A}\right),\1_{B}\right)\le\frac{\max h}{\min h}\mu_{\varphi}\left(A\cap\left(\sigma\rtimes\Psi\right)^{-n}\left(B\right)\right).
\]
Moreover, for all $g,g'\in G$ we have that
\[
\limsup_{n\rightarrow\infty}\frac{1}{n}\log\left(\mathcal{L}_{\varphi\circ\pi_{1}}^{n}\left(\pr g\right),\pr{g'}\right)=\mathcal{P}\left(\varphi,\Psi^{-1}\left\{ g^{-1}g'\right\} \cap\Sigma^{*}\right).
\]
\end{lem}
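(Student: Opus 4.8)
The plan is to prove the finite-$n$ two-sided estimate first, by pushing the operator identity that defines $\mathcal{L}_{\varphi\circ\pi_1}$ through the inner product, and then to deduce the asymptotic identity by specialising $A$ and $B$ to fibres of $\pi_2$ and invoking the Gibbs property of $\mu_\varphi$. For the estimate I would start from Definition \ref{def:Koopman-M-PF}: since the multiplication operators telescope, $\e^{-n\mathcal{P}(\varphi)}\mathcal{L}_{\varphi\circ\pi_1}^{n}=M_{h\circ\pi_1}\circ(U^{*})^{n}\circ M_{h\circ\pi_1}^{-1}$. Because $\Sigma$ is compact and $h\colon\Sigma\to\R^{+}$ is continuous, $h\circ\pi_1$ takes values in $[\min h,\max h]\subset(0,\infty)$, so $M_{h\circ\pi_1}$ is a bounded, invertible, self-adjoint (multiplication by a real function) operator on $L^{2}(\Sigma\times G,\mu_\varphi\times\lambda)$. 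Moving $M_{h\circ\pi_1}$ and $(U^{*})^{n}$ across the inner product and using $U^{n}f=f\circ(\sigma\rtimes\Psi)^{n}$ together with $\pi_1\circ(\sigma\rtimes\Psi)^{n}=\sigma^{n}\circ\pi_1$, one obtains
\[
\e^{-n\mathcal{P}(\varphi)}\bigl(\mathcal{L}_{\varphi\circ\pi_1}^{n}(\1_{A}),\1_{B}\bigr)=\int_{\Sigma\times G}\1_{A}\cdot\bigl(\1_{B}\circ(\sigma\rtimes\Psi)^{n}\bigr)\cdot\frac{h\circ\sigma^{n}\circ\pi_1}{h\circ\pi_1}\,d(\mu_\varphi\times\lambda).
\]
The integrand is supported on $A\cap(\sigma\rtimes\Psi)^{-n}(B)$, and on that set the ratio $h\circ\sigma^{n}\circ\pi_1/(h\circ\pi_1)$ lies in $[\min h/\max h,\ \max h/\min h]$; integrating gives both inequalities at once (reading the $\mu_\varphi$ of the statement as $\mu_\varphi\times\lambda$).

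For the asymptotic identity I would apply this estimate with $A=\{\pi_2=g\}$ and $B=\{\pi_2=g'\}$. Since $\Psi$ is a semigroup homomorphism, iteration gives $(\sigma\rtimes\Psi)^{n}(\omega,x)=(\sigma^{n}\omega,\,x\Psi(\omega_{|n}))$, hence
\[
A\cap(\sigma\rtimes\Psi)^{-n}(B)=\bigl\{\omega\in\Sigma:\Psi(\omega_{|n})=g^{-1}g'\bigr\}\times\{g\},
\]
whose $\mu_\varphi\times\lambda$-measure equals $\mu_\varphi\{\omega:\Psi(\omega_{|n})=g^{-1}g'\}$. As $\Psi(\omega_{|n})$ depends only on $\omega_{|n}$, this set is the disjoint union of the cylinders $[\tau]$ over $\tau\in\Sigma^{n}$ with $\Psi(\tau)=g^{-1}g'$; so the Gibbs inequality (\ref{eq:gibbs-equation}), together with the standard uniform bound on the oscillation of $S_{n}\varphi$ over cylinders (valid since $\varphi$ is H\"older), shows that $\e^{n\mathcal{P}(\varphi)}\mu_\varphi\{\omega:\Psi(\omega_{|n})=g^{-1}g'\}$ is comparable, up to a multiplicative constant independent of $n$, to $\sum_{\tau\in\Sigma^{n},\,\Psi(\tau)=g^{-1}g'}\e^{S_{\tau}\varphi}$, where $S_{\tau}\varphi=\sup_{x\in[\tau]}S_{n}\varphi(x)$ as in Definition \ref{def:induced-topological-pressure}. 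Combining this with the displayed identity, $\bigl(\mathcal{L}_{\varphi\circ\pi_1}^{n}(\pr g),\pr{g'}\bigr)$ is comparable, up to a multiplicative constant independent of $n$, to the same sum. Taking $\frac{1}{n}\log(\cdot)$ and $\limsup_{n\to\infty}$ removes the constant, and the resulting limit equals $\mathcal{P}(\varphi,\Psi^{-1}\{g^{-1}g'\}\cap\Sigma^{*})$: in Definition \ref{def:induced-topological-pressure} one has $S_{\omega}1=|\omega|$, so taking $\psi=1$ and $\eta=1$ (legitimate by the $\eta$-independence recorded after that definition) the defining $\limsup$ over $T$ reduces to $\limsup_{n}\frac{1}{n}\log\sum_{\omega\in\Sigma^{n},\,\Psi(\omega)=g^{-1}g'}\e^{S_{\omega}\varphi}$.

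I do not expect a genuine obstacle; the argument is essentially bookkeeping around the definition of $\mathcal{L}_{\varphi\circ\pi_1}$ and the Gibbs property. The only step that needs a little care is the comparison in the previous paragraph: (\ref{eq:gibbs-equation}) compares $\mu_\varphi[\tau]$ with $\exp(S_{n}\varphi(x)-n\mathcal{P}(\varphi))$ at an \emph{arbitrary} point $x\in[\tau]$, and one must pass to the supremum $S_{\tau}\varphi$, which is exactly what the uniform oscillation bound $\sup_{x\in[\tau]}S_{n}\varphi(x)-\inf_{x\in[\tau]}S_{n}\varphi(x)=O(1)$ for a H\"older potential supplies. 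A harmless edge case is when $\Sigma^{n}\cap\Psi^{-1}\{g^{-1}g'\}=\emptyset$ for all large $n$, in which case both sides of the claimed identity are $-\infty$.
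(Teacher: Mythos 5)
Your proposal is correct and follows essentially the same route as the paper: conjugating $(U^{*})^{n}$ by the multiplication operator $M_{h\circ\pi_{1}}$, moving the operators across the inner product, and using that $h$ is bounded away from $0$ and $\infty$ for the two-sided estimate, then specialising to $A=\{\pi_{2}=g\}$, $B=\{\pi_{2}=g'\}$ and invoking the Gibbs property for the pressure identity. Your extra remarks (reading $\mu_{\varphi}$ as $\mu_{\varphi}\times\lambda$, and the oscillation bound needed to pass from $S_{n}\varphi(x)$ to the supremum $S_{\tau}\varphi$ — though the latter already follows from the Gibbs inequality holding at every point of the cylinder) are just details the paper leaves implicit.
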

\begin{proof}
For the first assertion, observe that by the definition of $\mathcal{L}_{\varphi\circ\pi_{1}}$ we have that
\begin{eqnarray*}
\left(\mathcal{L}_{\varphi\circ\pi_{1}}^{n}\left(\1_{A}\right),\1_{B}\right) & = & \e^{n\mathcal{P}\left(\varphi\right)}\left(M_{h\circ\pi_{1}}\circ\left(U^{*}\right)^{n}\circ\left(M_{h\circ\pi_{1}}\right)^{-1}\left(\1_{A}\right),\1_{B}\right)\\
 & = & \e^{n\mathcal{P}\left(\varphi\right)}\left(\left(M_{h\circ\pi_{1}}\right)^{-1}\left(\1_{A}\right),\left(M_{h\circ\pi_{1}}\left(\1_{B}\right)\right)\circ\left(\sigma\rtimes\Psi\right)^{n}\right).
\end{eqnarray*}
Since the continuous function $h:\Sigma\rightarrow\R^{+}$ is bounded
away from zero and infinity on the compact set $\Sigma$, the first
assertion follows.

The second assertion follows from the first, if we set $A:=\left\{ \pi_{2}=g\right\} $
and $B:=\left\{ \pi_{2}=g'\right\} $ and use the Gibbs property (\ref{eq:gibbs-equation})
of $\mu_{\varphi}$.
\end{proof}
As an immediate consequence of the previous lemma, we obtain the following
upper bound for $\mathcal{P}\left(\varphi,\Psi^{-1}\left\{ g^{-1}g'\right\} \cap\Sigma^{*}\right)$
in terms of the spectral radius of $\mathcal{L}_{\varphi\circ\pi_{1}}$.
\begin{cor}
\label{cor:upperboundviaspectralradius}Let $V\subset L^{2}\left(\Sigma\times G,\mu_{\varphi}\times\lambda\right)$
be a closed $\mathcal{L}_{\varphi\circ\pi_{1}}$-invariant linear
subspace such that $\pr g, \pr{g'}\in V$, for some $g,g'\in G$. We then have that
\[
\mathcal{P}\left(\varphi,\Psi^{-1}\left\{ g^{-1}g'\right\} \cap\Sigma^{*}\right)\le\log\rho\left(\mathcal{L}_{\varphi\circ\pi_{1}}\big|_{V}\right).
\]
\end{cor}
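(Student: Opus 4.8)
The plan is to deduce the bound directly from the second assertion of Lemma \ref{lem:perronfrobenius-pressure} together with the spectral radius formula applied to the restricted operator $\mathcal{L}_{\varphi\circ\pi_{1}}\big|_{V}$. First I would recall that since $\pr g,\pr{g'}\in V$ and $V$ is a closed $\mathcal{L}_{\varphi\circ\pi_{1}}$-invariant subspace, the iterates $\mathcal{L}_{\varphi\circ\pi_{1}}^{n}(\pr g)$ all lie in $V$, so the inner product $(\mathcal{L}_{\varphi\circ\pi_{1}}^{n}(\pr g),\pr{g'})$ coincides with $(\mathcal{L}_{\varphi\circ\pi_{1}}\big|_{V}^{\,n}(\pr g),\pr{g'})_{V}$, where the inner product on the right is the one inherited from $L^{2}(\Sigma\times G,\mu_{\varphi}\times\lambda)$.

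Next I would apply Cauchy--Schwarz in $V$ to obtain
\[
\left(\mathcal{L}_{\varphi\circ\pi_{1}}^{n}\left(\pr g\right),\pr{g'}\right)\le\big\Vert\mathcal{L}_{\varphi\circ\pi_{1}}\big|_{V}^{\,n}\big\Vert\cdot\big\Vert\pr g\big\Vert\cdot\big\Vert\pr{g'}\big\Vert,
\]
and note that $\Vert\pr g\Vert^{2}=\mu_{\varphi}(\Sigma)\cdot\lambda\{g\}=1$, so the last two factors equal $1$. Taking $\frac{1}{n}\log$ of both sides, letting $n\to\infty$ along the appropriate subsequence, and using Gelfand's formula $\lim_{n}\Vert T^{n}\Vert^{1/n}=\rho(T)$ for the bounded operator $T=\mathcal{L}_{\varphi\circ\pi_{1}}\big|_{V}$ yields
\[
\limsup_{n\rightarrow\infty}\frac{1}{n}\log\left(\mathcal{L}_{\varphi\circ\pi_{1}}^{n}\left(\pr g\right),\pr{g'}\right)\le\log\rho\!\left(\mathcal{L}_{\varphi\circ\pi_{1}}\big|_{V}\right).
\]
Combining this with the identity from Lemma \ref{lem:perronfrobenius-pressure}, namely that the left-hand side equals $\mathcal{P}(\varphi,\Psi^{-1}\{g^{-1}g'\}\cap\Sigma^{*})$, gives the claimed inequality.

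The argument is essentially routine; the only point requiring a little care is the passage from $\limsup$ to $\lim$ via Gelfand's formula, since a priori the $\limsup$ in Lemma \ref{lem:perronfrobenius-pressure} might not be a genuine limit, but this causes no difficulty because $\limsup$ of a sequence bounded above by a convergent sequence is bounded above by that limit. One should also observe that the inner products $(\mathcal{L}_{\varphi\circ\pi_{1}}^{n}(\pr g),\pr{g'})$ are nonnegative (being integrals of nonnegative functions, by the positivity of the operator evident from Lemma \ref{fac:pf-fact}\,(\ref{enu:Uadjoint-is-PF})), so that taking logarithms is legitimate once $n$ is large enough that the quantity is positive; if it is eventually zero the left-hand pressure is $-\infty$ and the inequality holds trivially.
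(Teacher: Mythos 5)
Your proposal is correct and is essentially the paper's own argument: the paper likewise bounds $\left(\mathcal{L}_{\varphi\circ\pi_{1}}^{n}\big|_{V}\left(\pr g\right),\pr{g'}\right)$ by $\Vert\mathcal{L}_{\varphi\circ\pi_{1}}^{n}\big|_{V}\Vert$ via Cauchy--Schwarz, applies Gelfand's formula, and invokes the second assertion of Lemma \ref{lem:perronfrobenius-pressure}. Your additional remarks (unit norms of the indicators, nonnegativity of the inner products) are correct fillings-in of details the paper leaves implicit.
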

\begin{proof}
By the Cauchy-Schwarz inequality and Gelfand's formula (\cite[Theorem 10.13]{MR0365062}) for the spectral
radius,  we have that
\[
\limsup_{n\rightarrow\infty}\frac{1}{n}\log\left(\mathcal{L}_{\varphi\circ\pi_{1}}^{n}\big|_{V}\left(\pr g\right),\pr{g'}\right)\le\limsup_{n\rightarrow\infty}\frac{1}{n}\log\Vert\mathcal{L}_{\varphi\circ\pi_{1}}^{n}\big|_{V}\Vert=\log\rho\left(\mathcal{L}_{\varphi\circ\pi_{1}}\big|_{V}\right).
\]
Combining the above inequality with the second assertion of  Lemma \ref{lem:perronfrobenius-pressure} completes
the proof.
\end{proof}
Recall that for a closed linear subspace $V\subset L^{2}\left(\Sigma\times G,\mu_{\varphi}\times\lambda\right)$,
a bounded linear operator $T:V\rightarrow V$ is called {\em positive} if
$T\left(V^{+}\right)\subset V^{+}$, where the positive cone $V^{+}$
is defined by $V^{+}:=\left\{ f\in V:f\ge0\right\} $.

The following lemma will be crucial in order to obtain equality in the inequality  stated in Corollary \ref{cor:upperboundviaspectralradius}.
The lemma extends a result of Gerl (see \cite{MR938257} and also
\cite[Lemma 10.1]{MR1743100}).
\begin{lem}
\label{lem:lowerbound-selfadjoint}Let $V$ be a closed linear subspace
of\textup{ $L^{2}\left(\Sigma\times G,\mu_{\varphi}\times\lambda\right)$
}\textup{\emph{such that \linebreak}}\textup{ $\left\{ \1_{\left\{ \pi_{2}=g\right\} }:g\in G\right\} \subset V$.
}\textup{\emph{Let $T:V\rightarrow V$ be a self-adjoint bounded linear
operator on $V$, which is positive and which satisfies}} $\ker\left(T\right)\cap V^{+}=\left\{ 0\right\} $.
We then have that
\[
\sup_{g,g'\in G}\left\{ \limsup_{n\rightarrow\infty}\left|\left(T^{n}\left(\pr g\right),\pr{g'}\right)\right|^{1/n}\right\} =\Vert T\Vert=\rho\left(T\right).
\]
\end{lem}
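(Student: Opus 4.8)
The plan is to prove the two inequalities in $\sup_{g,g'\in G}\limsup_{n}|(T^{n}e_{g},e_{g'})|^{1/n}=\Vert T\Vert$ separately, writing $e_{g}:=\pr g$ throughout; the identity $\Vert T\Vert=\rho(T)$ is then automatic since $T$ is self-adjoint. Note first that each $e_{g}$ is a unit vector, because $(\mu_{\varphi}\times\lambda)(\{\pi_{2}=g\})=\mu_{\varphi}(\Sigma)\,\lambda(\{g\})=1$. For the upper bound, the Cauchy--Schwarz inequality gives $|(T^{n}e_{g},e_{g'})|\le\Vert T^{n}\Vert$ for all $g,g'$, hence by Gelfand's formula $\limsup_{n}|(T^{n}e_{g},e_{g'})|^{1/n}\le\lim_{n}\Vert T^{n}\Vert^{1/n}=\rho(T)=\Vert T\Vert$, uniformly in $g,g'$. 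It remains to establish the reverse inequality.

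For the lower bound I would pass to even powers, where positivity of $T$ removes all sign cancellations: since $T(V^{+})\subset V^{+}$ and $e_{g}\ge0$ we have $(T^{m}e_{g},e_{g'})\ge0$ for all $m$ and all $g,g'$, and $T^{2n}=(T^{n})^{*}T^{n}$ is positive semi-definite. Let $E$ be the spectral resolution of the positive semi-definite self-adjoint operator $T^{2}$; then $\Vert T^{2}\Vert=\Vert T\Vert^{2}=\max\sigma(T^{2})$, so $Q_{\epsilon}:=E([\Vert T\Vert^{2}-\epsilon,\Vert T\Vert^{2}])\ne0$ for every $\epsilon>0$. Pick an approximate maximiser $f\in V$ of $h\mapsto(T^{2}h,h)$ over the unit sphere with $(T^{2}f,f)>\Vert T\Vert^{2}-\epsilon^{2}$; replacing $f$ by $|f|$ (here positivity of $T$ is used) we may take $f\ge0$, and a Chebyshev estimate for the probability measure $s\mapsto(E(ds)f,f)$ on $[0,\Vert T\Vert^{2}]$ gives $c:=(Q_{\epsilon}f,f)\ge1-\epsilon>0$. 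Truncating $f$ from below by the finitely supported $f_{k}:=\min\!\big(f,\,k\sum_{g\in F_{k}}e_{g}\big)$, with $F_{k}\uparrow G$ an exhaustion by finite sets, one has $0\le f_{k}\le k\sum_{g\in F_{k}}e_{g}$ and $f_{k}\to f$ in $L^{2}$, so for $k$ fixed large $(Q_{\epsilon}f_{k},f_{k})\ge c/2$, whence the spectral theorem gives $(T^{2n}f_{k},f_{k})=\int s^{n}\,(E(ds)f_{k},f_{k})\ge(\Vert T\Vert^{2}-\epsilon)^{n}\,c/2$ for all $n$. On the other hand, positivity of $T$ yields the monotonicity $0\le a\le b\Rightarrow(T^{2n}a,a)\le(T^{2n}b,b)$, hence
\[
(T^{2n}f_{k},f_{k})\ \le\ k^{2}\!\!\sum_{g,g'\in F_{k}}\!\!(T^{2n}e_{g},e_{g'})\ \le\ k^{2}\,|F_{k}|^{2}\max_{g,g'\in F_{k}}(T^{2n}e_{g},e_{g'}).
\]
Dividing by the $n$-independent constant $k^{2}|F_{k}|^{2}$ and taking $2n$-th roots gives $\limsup_{n}\max_{g,g'\in F_{k}}(T^{2n}e_{g},e_{g'})^{1/2n}\ge(\Vert T\Vert^{2}-\epsilon)^{1/2}$; since $F_{k}$ is finite a fixed pair $(g,g')$ realises this along the even times, so $\sup_{g,g'}\limsup_{n}|(T^{n}e_{g},e_{g'})|^{1/n}\ge(\Vert T\Vert^{2}-\epsilon)^{1/2}$, and letting $\epsilon\downarrow0$ finishes the lower bound.

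The step I expect to demand the most care is the reduction to non-negative $f$ and to the truncations $f_{k}$, which as written uses that $V$ is stable under these lattice operations; this is immediate when $V=L^{2}(\Sigma\times G,\mu_{\varphi}\times\lambda)$ or, more generally, when $V$ is a sublattice, which covers the applications in this paper. For an arbitrary closed subspace $V\supset\{e_{g}:g\in G\}$ one argues instead at the level of spectral subspaces: assuming for contradiction that $Q_{\epsilon}$ annihilates every $e_{g}$, its range $W_{\epsilon}$ is a nonzero $T$-invariant closed subspace orthogonal to all $\pr g$, hence consisting of functions whose $\mu_{\varphi}$-average on each slice $\Sigma\times\{g\}$ vanishes; a Perron--Frobenius/Krein--Rutman argument for the positivity-preserving operator $T$ --- this is exactly where $T(V^{+})\subset V^{+}$ together with $\ker(T)\cap V^{+}=\{0\}$ enters, serving as an irreducibility hypothesis that forces the extreme part of the spectrum to be detected by a non-negative vector --- then shows $W_{\epsilon}\cap V^{+}\ne\{0\}$, which is impossible since $\mu_{\varphi}$ has full support on $\Sigma$ and hence any nonzero element of $V^{+}$ has strictly positive inner product with some $e_{g}$. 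Establishing this detection statement in full generality is the crux of the argument; once it is in hand, the conclusion follows as in the second paragraph with the single pair $(g,g)$.
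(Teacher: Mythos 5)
Your argument is correct and has the same overall skeleton as the paper's (Cauchy--Schwarz and Gelfand for the upper bound; reduction to non-negative, finitely supported vectors and domination by the matrix coefficients $\left(T^{2n}\1_{\left\{ \pi_{2}=g\right\} },\1_{\left\{ \pi_{2}=g'\right\} }\right)$ for the lower bound), but the key step --- showing that such vectors detect $\Vert T\Vert$ --- is handled by a genuinely different technique. The paper avoids the spectral theorem entirely: for $f\in V^{+}\setminus\left\{ 0\right\} $ it shows, using only self-adjointness and Cauchy--Schwarz, that the ratios $\left(T^{n+1}f,T^{n+1}f\right)/\left(T^{n}f,T^{n}f\right)$ form a non-decreasing sequence (this is exactly where the hypothesis $\ker\left(T\right)\cap V^{+}=\left\{ 0\right\} $ enters, to keep the denominators nonzero), whence $\left(Tf,Tf\right)/\left(f,f\right)\le\lim_{n}\left(T^{n}f,T^{n}f\right)^{1/n}$; it then bounds $\left(T^{n}f,T^{n}f\right)\le\left(T^{2n}\left|f\right|,\left|f\right|\right)$ for bounded $f$ supported on finitely many slices and lets $f$ run through a dense set of approximate maximizers of the Rayleigh quotient. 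You instead locate the top of the spectrum of $T^{2}$ via the spectral resolution and a Chebyshev estimate and then perform essentially the same truncation and domination. Both routes are sound: yours is arguably more transparent and, in the lattice case, does not use the kernel hypothesis at all; the paper's is more elementary (no spectral calculus) and is the classical argument of Gerl that it cites. Your caveat about $V$ being stable under $f\mapsto\left|f\right|$ and under the truncations is well taken, but note that the paper's own proof makes the same implicit assumption (it applies $T$ to $\left|f\right|$ for $f\in D\subset V$), and in the only application (Proposition \ref{pro:lowerbound-asymptselfadjoint}) one has $\left\{ f^{-}:f\in V\right\} \subset V$, so the Krein--Rutman fallback you sketch for a general closed subspace is not needed to match the paper's level of generality.
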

\begin{proof}
Since $T$ is self-adjoint, it follows that $\Vert T\Vert=\rho\left(T\right)$.
As in the proof of Corollary \ref{cor:upperboundviaspectralradius},
one immediately verifies that
\[
\sup_{g,g'\in G}\left\{ \limsup_{n\rightarrow\infty}\left|\left(T^{n}\left(\pr g\right),\pr{g'}\right)\right|^{1/n}\right\} \le\rho\left(T\right).
\]
Let us first give an outline for the proof of the opposite inequality.
We will first prove that for all $f\in V^{+}$ with $f\neq0$, the sequence $\left(\left(T^{n+1}f,T^{n+1}f\right)/\left(T^{n}f,T^{n}f\right)\right)_{n\in\N_{0}}$, 
is non-decreasing. This will then imply that the following limits
exist and are equal:
\begin{equation}
\lim_{n\rightarrow\infty}\frac{\left(T^{n+1}f,T^{n+1}f\right)}{\left(T^{n}f,T^{n}f\right)}=\lim_{n\rightarrow\infty}\left(T^{n}f,T^{n}f\right)^{1/n}.\label{eq:lowerboundselfadjoint-1a}
\end{equation}
From this we obtain for every $f\in V^{+}$ with $f\neq0$ that
\begin{equation}
\frac{\left(Tf,Tf\right)}{\left(f,f\right)}\le\lim_{n\rightarrow\infty}\left(T^{n}f,T^{n}f\right)^{1/n}.\label{eq:lowerboundselfadjoint-1aa}
\end{equation}
Subsequently, we make use of the fact that
\[
D':=\left\{ f\in L^{2}\left(\Sigma\times G,\mu_{\varphi}\times\lambda\right)\cap L^{\infty}\left(\Sigma\times G,\mu_{\varphi}\times\lambda\right):\,\, f\big|_{\left\{ \pi_{2}=g\right\} }=0\textrm{ for almost every }g\in G\right\}
\]
is dense in $L^{2}\left(\Sigma\times G,\mu_{\varphi}\times\lambda\right)$
and hence, $D:=D'\cap V$ is dense in $V$. For $f\in D$ we show
that
\[
\lim_{n\rightarrow\infty}\left(T^{n}f,T^{n}f\right)^{1/n}\le\sup_{g,g'\in G}\left\{ \limsup_{n\rightarrow\infty}\left|\left(T^{2n}\left(\pr g\right),\pr{g'}\right)\right|^{1/n}\right\} .
\]
Combining this with (\ref{eq:lowerboundselfadjoint-1aa}) applied
to $\left|f\right|$,  we conclude for $f\in D$ with $f\neq0$ that

\begin{equation}
\frac{\left(Tf,Tf\right)}{\left(f,f\right)}\le\frac{\left(T\left|f\right|,T\left|f\right|\right)}{\left(\left|f\right|,\left|f\right|\right)}\le\sup_{g,g'\in G}\left\{ \limsup_{n\rightarrow\infty}\left|\left(T^{2n}\left(\pr g\right),\pr{g'}\right)\right|^{1/n}\right\} .\label{eq:lowerboundselfadjoint-1}
\end{equation}
Since $D$ is dense in $V$, there exists a sequence $(f_n)_{n\in \N}\in D^{\N}$  such that  $\lim_n(Tf_n,Tf_n)=\Vert T \Vert$ and $(f_n,f_n)=1$, for each $n\in \N$. Combining this observation with the estimate in (\ref{eq:lowerboundselfadjoint-1}), we conclude that  $\Vert T\Vert\le\sup_{g,g'\in G}\left\{ \limsup_{n}\left|\left(T^{2n}\left(\pr g\right),\pr{g'}\right)\right|^{1/2n}\right\} $.

Let us now turn to the details. We first verify that for every $f\in V^{+}$
with $f\neq0$, the sequence $\left(a_{n}\right)_{n\in\N_{0}}$ of
positive real numbers,  given for $n\in\N_{0}$ by $a_{n}:=\left(T^{n+1}f,T^{n+1}f\right)/\left(T^{n}f,T^{n}f\right)$  is non-decreasing. Using that $T$ is self-adjoint
and applying the Cauchy-Schwarz inequality, we have for $n\in\N_{0}$
that
\begin{eqnarray}
\left(T^{n+1}f,T^{n+1}f\right)^{2} & = & \left(T^{n}f,T^{n+2}f\right)^{2}\le\left(T^{n}f,T^{n}f\right)\left(T^{n+2}f,T^{n+2}f\right).\label{eq:monotony-spectral-radius}
\end{eqnarray}
Since $\left(T^{n}f,T^{n}f\right)\neq0$ for all $n\in\N_{0}$ by
our hypothesis, we can multiply both sides of (\ref{eq:monotony-spectral-radius})
by $\left(T^{n+1}f,T^{n+1}f\right)^{-1}\left(T^{n+2}f,T^{n+2}f\right)^{-1}$,
which proves that $\left(a_{n}\right)_{n\in\N_{0}}$ is non-decreasing.
Hence, we have that $\lim_{n\rightarrow \infty}a_{n}\in\R^{+}\cup\left\{ \infty\right\} $
exists. Observing that $\log\left(T^{n}f,T^{n}f\right)$ is equal
to the telescoping sum $\log\left(f,f\right)+\sum_{j=0}^{n-1}\log a_{j}$
and using that $\lim_{n\rightarrow \infty}\log\left(a_{n}\right)$ is equal to its Ces\`{a}ro
mean, we deduce that
\[
\lim_{n\rightarrow\infty}\frac{1}{n}\log\left(T^{n}f,T^{n}f\right)=\lim_{n\rightarrow\infty}\frac{1}{n}\log\left(f,f\right)+\lim_{n\rightarrow\infty}\frac{1}{n}\sum_{j=0}^{n-1}\log a_{j}=\lim_{n\rightarrow\infty}\log a_{n},
\]
which proves (\ref{eq:lowerboundselfadjoint-1a}). Since $\left(T^{n}f,T^{n}f\right)^{1/n}\le\Vert T\Vert^{2}\max\left\{ \Vert f\Vert_{2}^2,1\right\} $,
for all $n\in\N$, we have that the limits in (\ref{eq:lowerboundselfadjoint-1a})
are both  finite.

It remains to prove that (\ref{eq:lowerboundselfadjoint-1}) holds
for every $f\in D$ with $f\neq0$. By definition of $D$,  there exists
a finite set $G_{0}\subset G$ such that $f=\sum_{g\in G_{0}}f\pr g$.
Since $T$ is positive and self-adjoint, we conclude that
\begin{eqnarray*}
\left(T^{n}f,T^{n}f\right) & \le & \left(T^{n}\left|f\right|,T^{n}\left|f\right|\right)=\left(T^{2n}\left|f\right|,\left|f\right|\right)=\sum_{g,g'\in G_{0}}\left(T^{2n}\left|f\pr g\right|,\left|f\pr{g'}\right|\right)\\
 & \le & \sum_{g,g'\in G_0}\Vert f\Vert_{L^{\infty}\left(\Sigma\times G,\mu_{\varphi}\times\lambda\right)}\left(T^{2n}\pr g,\pr{g'}\right).
\end{eqnarray*}
Finally, raising both sides of the previous inequality to the power
$1/n$ and let $n$ tend to infinity gives
\[
\lim_{n\rightarrow\infty}\left(T^{n}f,T^{n}f\right)^{1/n}\le\max_{g,g'\in G_{0}}\limsup_{n\rightarrow\infty}\left|\left(T^{2n}\pr g,\pr{g'}\right)\right|^{1/n},
\]
and the estimate in (\ref{eq:lowerboundselfadjoint-1}) follows. The
proof is complete.
\end{proof}
Regarding the requirements of the previous proposition, we prove the
following for $\mathcal{L}_{\varphi\circ\pi_{1}}$.
\begin{lem}
\label{fac:positivity-injectivitiy-of-adjoint-of-pf}Let $V$ be a
closed $\mathcal{L}_{\varphi\circ\pi_{1}}$-invariant linear subspace
of $L^{2}\left(\Sigma\times G,\mu_{\varphi}\times\lambda\right)$
and suppose that $\mathcal{L}_{\varphi}\1=\1$. Then,  $\mathcal{L}_{\varphi\circ\pi_{1}}\big|_{V}$
is a positive operator for which $\ker\left(\mathcal{L}_{\varphi\circ\pi_{1}}\big|_{V}\right)\cap V^{+}=\left\{ 0\right\} .$ Further, if $\left\{ f^{-}:f\in V\right\} \subset V$ then $\left(\mathcal{L}_{\varphi\circ\pi_{1}}\big|_{V}\right)^{*}$
is a positive operator and if there exists $g\in V$ with $g>0$,
then we have  that $\ker\left(\left(\mathcal{L}_{\varphi\circ\pi_{1}}\big|_{V}\right)^{*}\right)\cap V^{+}=\left\{ 0\right\} .$ \end{lem}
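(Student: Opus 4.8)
The plan is to reduce everything to the explicit formulas for $\mathcal{L}_{\varphi\circ\pi_{1}}$ and for its $L^{2}$-adjoint, together with one structural fact about orthogonal projections onto sublattices. First I would normalise: by Theorem \ref{thm:perron-frobenius-thm-urbanski} the hypothesis $\mathcal{L}_{\varphi}\1=\1$ forces $h\equiv\1$ and $\mathcal{P}(\varphi)=0$, so by Definition \ref{def:Koopman-M-PF} we have $\mathcal{L}_{\varphi\circ\pi_{1}}=U^{*}$ and hence $(\mathcal{L}_{\varphi\circ\pi_{1}})^{*}=U$ on all of $L^{2}(\Sigma\times G,\mu_{\varphi}\times\lambda)$. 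Since $V$ is $\mathcal{L}_{\varphi\circ\pi_{1}}$-invariant, the Hilbert-space adjoint of $\mathcal{L}_{\varphi\circ\pi_{1}}\big|_{V}\colon V\to V$ is then $P_{V}\circ U\big|_{V}$, where $P_{V}$ denotes the orthogonal projection of $L^{2}(\Sigma\times G,\mu_{\varphi}\times\lambda)$ onto $V$; indeed, for $u,v\in V$ one has $\langle\mathcal{L}_{\varphi\circ\pi_{1}}u,v\rangle=\langle u,Uv\rangle=\langle u,P_{V}Uv\rangle$. (One could equally run the argument with general $h$, which is bounded away from $0$ and $\infty$ on the compact space $\Sigma$, using the conjugated formula in Definition \ref{def:Koopman-M-PF}; the normalisation only serves to keep the bookkeeping short.)

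Next I would treat $\mathcal{L}_{\varphi\circ\pi_{1}}\big|_{V}$. By Lemma \ref{fac:pf-fact}(\ref{enu:Uadjoint-is-PF}), $\mathcal{L}_{\varphi\circ\pi_{1}}(f)(\omega,g)=\sum_{i\in I:\,i\omega_{1}\in\Sigma^{2}}\e^{\varphi(i\omega)}f(i\omega,g\Psi(i)^{-1})$, a sum with strictly positive coefficients, so $f\ge0\Rightarrow\mathcal{L}_{\varphi\circ\pi_{1}}f\ge0$; combined with $\mathcal{L}_{\varphi\circ\pi_{1}}(V)\subseteq V$ this shows $\mathcal{L}_{\varphi\circ\pi_{1}}\big|_{V}$ is positive. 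If moreover $f\in V^{+}$ with $\mathcal{L}_{\varphi\circ\pi_{1}}f=0$, then $U^{*}f=0$, i.e. $f$ is orthogonal to the range of $U$; pairing $f$ with $U\1_{\Sigma\times F}=\1_{(\sigma\rtimes\Psi)^{-1}(\Sigma\times F)}$ for an arbitrary finite set $F\subseteq G$ and using $f\ge0$ forces $f$ to vanish $(\mu_{\varphi}\times\lambda)$-a.e. on $(\sigma\rtimes\Psi)^{-1}(\Sigma\times F)=\bigsqcup_{a\in I}[a]\times F\Psi(a)^{-1}$; letting $F$ exhaust $G$, these sets exhaust $\Sigma\times G$, so $f=0$ and $\ker(\mathcal{L}_{\varphi\circ\pi_{1}}\big|_{V})\cap V^{+}=\{0\}$.

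The main point is the positivity of $P_{V}$. Under the hypothesis $\{f^{-}:f\in V\}\subseteq V$, the space $V$ is a sublattice: $f\in V$ gives $f^{+}=(-f)^{-}\in V$, hence $\max(f,g)=g+(f-g)^{+}\in V$ for all $f,g\in V$. For a closed sublattice $V$, the orthogonal projection $P_{V}$ is positive: for $f\ge0$ and any $h\in V$, expanding gives $\Vert f-h^{+}\Vert^{2}=\Vert f-h\Vert^{2}-2\langle f,h^{-}\rangle-\Vert h^{-}\Vert^{2}\le\Vert f-h\Vert^{2}$; taking $h:=P_{V}f\in V$, so that $h^{+}\in V$ as well, and invoking uniqueness of the nearest point of $V$ to $f$ forces $P_{V}f=(P_{V}f)^{+}\ge0$. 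Since $U$ is obviously positive ($f\ge0\Rightarrow Uf=f\circ(\sigma\rtimes\Psi)\ge0$), the composition $(\mathcal{L}_{\varphi\circ\pi_{1}}\big|_{V})^{*}=P_{V}\circ U\big|_{V}$ maps $V^{+}$ into $V^{+}$, i.e. it is positive.

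Finally, assuming there is $g\in V$ with $g>0$ $(\mu_{\varphi}\times\lambda)$-a.e., let $f\in V^{+}$ with $(\mathcal{L}_{\varphi\circ\pi_{1}}\big|_{V})^{*}f=0$, that is $P_{V}(Uf)=0$, so $Uf$ is orthogonal to $V$. Then $0=\langle Uf,g\rangle$ with $Uf\ge0$ and $g>0$ a.e., whence $Uf=0$ a.e., i.e. $f\circ(\sigma\rtimes\Psi)=0$ $(\mu_{\varphi}\times\lambda)$-a.e. By Lemma \ref{lem:mu-phi-prod-counting-is-invariant} the measure $\mu_{\varphi}\times\lambda$ is $(\sigma\rtimes\Psi)$-invariant, so $\{f\neq0\}$ and $(\sigma\rtimes\Psi)^{-1}\{f\neq0\}$ have equal measure; the latter is null, hence $f=0$, so $\ker\bigl((\mathcal{L}_{\varphi\circ\pi_{1}}\big|_{V})^{*}\bigr)\cap V^{+}=\{0\}$. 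The only step that is not a direct computation with the two formulas and the invariance of $\mu_{\varphi}\times\lambda$ is the positivity of the metric projection onto a closed sublattice, which I expect to be the crux; a minor additional care is needed for the identity $(\mathcal{L}_{\varphi\circ\pi_{1}}\big|_{V})^{*}=P_{V}\circ U\big|_{V}$, which uses $\mathcal{L}_{\varphi\circ\pi_{1}}$-invariance of $V$ rather than invariance under its adjoint.
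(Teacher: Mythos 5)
Your proof is correct, but for the two substantive assertions it takes a genuinely different route from the paper. For positivity of the adjoint, the paper never introduces the projection $P_{V}$: writing $g:=\left(\mathcal{L}_{\varphi\circ\pi_{1}}\big|_{V}\right)^{*}(f)$ for $f\in V^{+}$, it sandwiches $\left(g,g^{-}\right)$ between $0$ and $\left(f,\mathcal{L}_{\varphi\circ\pi_{1}}\left(g^{-}\right)\right)\ge0$, and since $\left(g,g^{-}\right)=-\Vert g^{-}\Vert_{2}^{2}$ this forces $g^{-}=0$ in two lines. Your version instead normalises to $\mathcal{L}_{\varphi\circ\pi_{1}}=U^{*}$, identifies $\left(\mathcal{L}_{\varphi\circ\pi_{1}}\big|_{V}\right)^{*}=P_{V}\circ U\big|_{V}$, and proves that the orthogonal projection onto a closed sublattice is positive via the nearest-point inequality $\Vert f-h^{+}\Vert^{2}\le\Vert f-h\Vert^{2}$; this is longer but isolates where the hypothesis $\left\{ f^{-}:f\in V\right\} \subset V$ enters (it makes $V$ a sublattice, hence $P_{V}$ positive), whereas in the paper the same hypothesis is used only to ensure $g^{-}\in V$ so that the duality pairing is legitimate. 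For the kernel statements the routes also differ: the paper kills $\ker\left(\mathcal{L}_{\varphi\circ\pi_{1}}\big|_{V}\right)\cap V^{+}$ via the mass-preservation identity $\int f\,d\!\left(\mu_{\varphi}\times\lambda\right)=\int\mathcal{L}_{\varphi\circ\pi_{1}}\left(f\right)d\!\left(\mu_{\varphi}\times\lambda\right)$, while you test $f$ against $U\1_{\Sigma\times F}$ and exhaust $\Sigma\times G$; and for the adjoint kernel the paper uses $0=\left(f,\mathcal{L}_{\varphi\circ\pi_{1}}\left(g\right)\right)$ with $\mathcal{L}_{\varphi\circ\pi_{1}}\left(g\right)>0$, while you deduce $Uf=0$ and invoke the $\left(\sigma\rtimes\Psi\right)$-invariance of $\mu_{\varphi}\times\lambda$ (the isometry property of $U$ from Lemma \ref{fac:pf-fact} would finish this last step even faster). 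All of your steps check out, including the reduction $h\equiv\1$, $\mathcal{P}\left(\varphi\right)=0$ from $\mathcal{L}_{\varphi}\1=\1$ via the uniqueness statement in Theorem \ref{thm:perron-frobenius-thm-urbanski}.
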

\begin{proof}
 Clearly, by definition of $\mathcal{L}_{\varphi\circ\pi_{1}}$,
we have that $\mathcal{L}_{\varphi\circ\pi_{1}}\big|_{V}$ is positive.
Now let $f\in\ker\left(\mathcal{L}_{\varphi\circ\pi_{1}}\big|_{V}\right)\cap V^{+}$. Since $\mu_{\varphi}$ is a fixed point of $\mathcal{L}_{\varphi}^*$, one deduces by the monotone convergence theorem  and by the definition of  $\mathcal{L}_{\varphi\circ\pi_{1}}$ that $\int fd\!\left(\mu_{\varphi}\times\lambda\right)=\int\mathcal{L}_{\varphi\circ\pi_{1}}\left(f\right)d\!\left(\mu_{\varphi}\times\lambda\right)$. Hence,  $f\in\ker\left(\mathcal{L}_{\varphi\circ\pi_{1}}\big|_{V}\right)\cap V^{+}$
implies $\int fd\!\left(\mu_{\varphi}\times\lambda\right)=0$ and so, $f=0$.

We now turn our attention to the adjoint  operator $\left(\mathcal{L}_{\varphi\circ\pi_{1}}\big|_{V}\right)^{*}$. Let $f\in V^+$.   Since  $\left\{ f^{-}:f\in V\right\} \subset V$ and using that  $\mathcal{L}_{\varphi\circ\pi_{1}}$ is positive, we obtain  that
\[
0\ge\left(\left(\mathcal{L}_{\varphi\circ\pi_{1}}\big|_{V}\right)^{*}\left(f\right),\left(\left(\mathcal{L}_{\varphi\circ\pi_{1}}\big|_{V}\right)^{*}\left(f\right)\right)^{-}\right)=\left(f,\mathcal{L}_{\varphi\circ\pi_{1}}\left(\left(\left(\mathcal{L}_{\varphi\circ\pi_{1}}\big|_{V}\right)^{*}\left(f\right)\right)^{-}\right)\right)\ge0.
\]
Thus, $0=\left(\left(\mathcal{L}_{\varphi\circ\pi_{1}}\big|_{V}\right)^{*}\left(f\right),\left(\left(\mathcal{L}_{\varphi\circ\pi_{1}}\big|_{V}\right)^{*}\left(f\right)\right)^{-}\right)=-\Vert\left(\left(\mathcal{L}_{\varphi\circ\pi_{1}}\big|_{V}\right)^{*}\left(f\right)\right)^{-}\Vert_{2}^{2}$
and so,  $\left(\mathcal{L}_{\varphi\circ\pi_{1}}\big|_{V}\right)^{*}$
is positive. Now let $f\in\ker\left(\left(\mathcal{L}_{\varphi\circ\pi_{1}}\big|_{V}\right)^{*}\right)\cap V^{+}$
be given and assume that there exists $g\in V$ with $g>0$. We then have that
\[
0=\left(\left(\mathcal{L}_{\varphi\circ\pi_{1}}\big|_{V}\right)^{*}\left(f\right),g\right)=\left(f,\mathcal{L}_{\varphi\circ\pi_{1}}\left(g\right)\right).
\]
Since $g>0$, we have $\mathcal{L}_{\varphi\circ\pi_{1}}\left(g\right)>0$,
which implies that $f=0$.  The proof is complete.
\end{proof}
It turns out that the   Perron-Frobenius operator is not self-adjoint in general. In fact, as we will see in the following remark, this operator is self-adjoint only in very special cases. Therefore, we will introduce the notion of an asymptotically self-adjoint operator in Definition \ref{def:asymptotically-sefadjoint} below. 
\begin{rem*} We observe that the requirement that $\mathcal{L}_{\varphi\circ\pi_{1}}\big|_{V}$
is self-adjoint, for some closed linear subspace $V$ of $L^{2}\left(\Sigma\times G,\mu_{\varphi}\times\lambda\right)$,
is rather restrictive. Indeed, suppose that $\mathcal{L}_{\varphi\circ\pi_{1}}\big|_{V}$ is self-adjoint
for a closed linear subspace $V$ of $L^{2}\left(\Sigma\times G,\mu_{\varphi}\times\lambda\right)$
satisfying $\left\{ \1_{\left[i\right]\times\left\{ g\right\} }:i\in I,g\in G\right\} \subset V$.
 It follows that  $ji\in\Sigma^{2}$ and $\Psi\left(i\right)=\Psi\left(j\right)^{-1}$,  for  all $i,j\in I$ such that  $ij\in\Sigma^{2}$.
In particular, we have that $\Psi\left(\Sigma^{*}\right)$ has at
most two elements. To prove  this, let $ij\in\Sigma^{2}$ be given. By the Gibbs property (\ref{eq:gibbs-equation}) of $\mu_\varphi$ we have that  $\mu_{\varphi}[ij]>0$. Setting  $C:=\frac{\max h}{\min h}\e^{-\mathcal{P}(\varphi)}$ we deduce from Lemma \ref{lem:perronfrobenius-pressure} that
\[
0<\left(\mu_{\varphi}\times\lambda\right)\left(\left(\left[i\right]\times\left\{ \id\right\} \right)\cap\left(\sigma\rtimes\Psi\right)^{-1}\left(\left[j\right]\times\left\{ \Psi(i)\right\} \right)\right)\le C  \left(\mathcal{L}_{\varphi\circ\pi_{1}}\big|_{V}\left(\1_{\left[i\right]\times\left\{ \id\right\} }\right),\1_{\left[j\right]\times\left\{ \Psi\left(i\right)\right\} }\right).
\]
Using that $\mathcal{L}_{\varphi\circ\pi_{1}}\big|_{V}$ is self-adjoint and again by Lemma \ref{lem:perronfrobenius-pressure}, we conclude that
\[
\left(\mathcal{L}_{\varphi\circ\pi_{1}}\big|_{V}\left(\1_{\left[i\right]\times\left\{ \id\right\} }\right),\1_{\left[j\right]\times\left\{ \Psi\left(i\right)\right\} }\right)\le C  \left(\mu_{\varphi}\times\lambda\right)\left(\left(\left[j\right]\times\left\{ \Psi\left(i\right)\right\} \right)\cap\left(\sigma\rtimes\Psi\right)^{-1}\left(\left[i\right]\times\left\{ \id\right\} \right)\right).
\]
Combining the previous two estimates, we conclude that $\left(\left[j\right]\times\left\{ \Psi\left(i\right)\right\} \right)\cap\left(\sigma\rtimes\Psi\right)^{-1}\left(\left[i\right]\times\left\{ \id\right\} \right)$
is nonempty, hence $ji\in\Sigma^{2}$ and $\Psi\left(i\right)\Psi\left(j\right)=\id$.
\end{rem*}

The following definition introduces a concept which is slightly weaker than self-adjointness.
\begin{defn}\label{def:asymptotically-sefadjoint}Let $V$ be a closed linear subspace of  $L^{2}\left(\Sigma\times G,\mu_{\varphi}\times\lambda\right)$ and let  $T:V\rightarrow V$ be a positive bounded linear operator. We say that $T$ is \emph{asymptotically self-adjoint} if there exist sequences
$\left(c_{m}\right)_{m\in\N}\in\left(\R^{+}\right)^{\N}$ and $\left(N_{m}\right)_{m\in\N}\in\N_0^{\N}$
with the property that $\lim_{m\to\infty}$$\left(c_{m}\right)^{1/m}=1$, $\lim_{m\to\infty}m^{-1}N_{m}=0$,  such that for all non-negative functions $f,g\in V$ and for all $n\in\N$ we have
\begin{equation}
\left(T^{n}f,g\right)\le c_{n}\sum_{i=0}^{N_{n}}\left(f,T^{n+i}g\right).
\label{eq:asymptotically-selfadjoint-condition}
\end{equation}
\end{defn}
\begin{rem*}
Note that $T$ is asymptotically  self-adjoint if and only if $T^{*}$ is
asymptotically  self-adjoint. We also remark that it clearly suffices to verify
(\ref{eq:asymptotically-selfadjoint-condition}) on a norm-dense subset
of non-negative functions in $V$.
\end{rem*}
The next proposition shows that if $\mathcal{L}_{\varphi\circ\pi_{1}}\big|_{V}$
is asymptotically self-adjoint, for some closed linear subspace $V$ of $L^{2}\left(\Sigma\times G,\mu_{\varphi}\times\lambda\right)$,
then we can relate the supremum of $\mathcal{P}\left(\varphi,\Psi^{-1}\left\{ g\right\} \cap\Sigma^{*}\right)$, 
for $g\in G$,  to  the spectral radius of $\mathcal{L}_{\varphi\circ\pi_{1}}\big|_{V}$.
The proof, which makes use of Lemma \ref{lem:lowerbound-selfadjoint}
and Lemma \ref{fac:positivity-injectivitiy-of-adjoint-of-pf}, is
inspired by \cite[Proposition 1.6]{MR2338235}.
\begin{prop}
\label{pro:lowerbound-asymptselfadjoint}Suppose that $\mathcal{L}_{\varphi}\1=\1$
and let $V\subset L^{2}\left(\Sigma\times G,\mu_{\varphi}\times\lambda\right)$
be a closed linear $\mathcal{L}_{\varphi\circ\pi_{1}}$-invariant
subspace such that $\left\{ f^{-}:f\in V\right\} \subset V$ and $\left\{ \pr g:g\in G\right\} \subset V$.
If \textup{$\mathcal{L}_{\varphi\circ\pi_{1}}\big|_{V}$ } is asymptotically
self-adjoint, then we have that 
\[
\sup_{g\in G}\left\{ \mathcal{P}\left(\varphi,\Psi^{-1}\left\{ g\right\} \cap\Sigma^{*}\right)\right\} =\log\rho\left(\mathcal{L}_{\varphi\circ\pi_{1}}\big|_{V}\right).
\]
\end{prop}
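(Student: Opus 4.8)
The inequality $\sup_{g\in G}\mathcal{P}\left(\varphi,\Psi^{-1}\left\{ g\right\} \cap\Sigma^{*}\right)\le\log\rho\left(\mathcal{L}_{\varphi\circ\pi_{1}}\big|_{V}\right)$ is already available: it is exactly Corollary \ref{cor:upperboundviaspectralradius} applied with $g'=\id$ for each $g\in G$ (note $\pr g\in V$ by hypothesis and $V$ is $\mathcal{L}_{\varphi\circ\pi_1}$-invariant), so only the reverse inequality requires work. For that, the idea is to apply Lemma \ref{lem:lowerbound-selfadjoint} to a genuinely self-adjoint operator manufactured from $\mathcal{L}_{\varphi\circ\pi_1}$, namely $S:=\left(\mathcal{L}_{\varphi\circ\pi_1}\big|_V\right)^*\mathcal{L}_{\varphi\circ\pi_1}\big|_V$. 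This $S$ is self-adjoint, positive, and satisfies $\rho(S)=\Vert S\Vert=\Vert\mathcal{L}_{\varphi\circ\pi_1}\big|_V\Vert^2=\rho\left(\mathcal{L}_{\varphi\circ\pi_1}\big|_V\right)^2$, the last step using Lemma \ref{fac:pf-fact}(\ref{enu:.pf-fact-spectralradius-pressure}) together with the fact that the spectral radius of a power bound already forces $\rho=\Vert\cdot\Vert$ on the relevant piece. The hypotheses of Lemma \ref{lem:lowerbound-selfadjoint} that remain are $\left\{\pr g:g\in G\right\}\subset V$ (assumed) and $\ker(S)\cap V^+=\{0\}$; the latter follows from Lemma \ref{fac:positivity-injectivitiy-of-adjoint-of-pf}, since $\mathcal{L}_\varphi\1=\1$, $\{f^-:f\in V\}\subset V$, and (because $\1_{\{\pi_2=g\}}\in V$ for all $g$, so $\sum$ of countably many such — suitably normalized — gives a strictly positive element, or one argues directly) the kernel conditions on $\mathcal{L}_{\varphi\circ\pi_1}\big|_V$ and its adjoint hold, whence $Sf=0$ with $f\ge0$ forces $\mathcal{L}_{\varphi\circ\pi_1}f=0$ and then $f=0$.

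Applying Lemma \ref{lem:lowerbound-selfadjoint} to $S$ yields
\[
\rho\left(\mathcal{L}_{\varphi\circ\pi_1}\big|_V\right)^2=\rho(S)=\sup_{g,g'\in G}\left\{\limsup_{n\to\infty}\left|\left(S^n\pr g,\pr{g'}\right)\right|^{1/n}\right\}.
\]
Now the role of asymptotic self-adjointness enters: I want to replace $S^n=\left(\left(\mathcal{L}_{\varphi\circ\pi_1}\big|_V\right)^*\right)^n\left(\mathcal{L}_{\varphi\circ\pi_1}\big|_V\right)^n$ inside the pairing by a power of $\mathcal{L}_{\varphi\circ\pi_1}\big|_V$ alone, so that Lemma \ref{lem:perronfrobenius-pressure} can convert the exponential growth rate into the pressure $\mathcal{P}(\varphi,\Psi^{-1}\{\cdot\}\cap\Sigma^*)$. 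Writing $T=\mathcal{L}_{\varphi\circ\pi_1}\big|_V$ and using $\left((T^*)^nT^n\pr g,\pr{g'}\right)=\left(T^n\pr g,T^n\pr{g'}\right)$, Cauchy–Schwarz gives $\left(S^n\pr g,\pr{g'}\right)\le\left(T^{2n}\pr g,\pr g\right)^{1/2}\left(T^{2n}\pr{g'},\pr{g'}\right)^{1/2}$; but I also need a matching lower bound, and here is where \eqref{eq:asymptotically-selfadjoint-condition} is used: the asymptotic self-adjointness inequality $\left(T^nf,g\right)\le c_n\sum_{i=0}^{N_n}\left(f,T^{n+i}g\right)$ lets me bound $\left(T^n\pr g,T^n\pr{g'}\right)=\left(T^{2n}\pr g,\pr{g'}\right)$ from above by $c_{2n}\sum_{i=0}^{N_{2n}}\left(\pr g,T^{2n+i}\pr{g'}\right)$, and since $(c_{2n})^{1/n}\to1$, $N_{2n}/n\to0$, and each term $\left(\pr g,T^m\pr{g'}\right)=\left(T^m\pr{g'},\pr g\right)$ has exponential growth rate at most $\sup_{h\in G}\mathcal{P}(\varphi,\Psi^{-1}\{h\}\cap\Sigma^*)$ by Lemma \ref{lem:perronfrobenius-pressure} (noting $\left(T^m\pr{g'},\pr g\right)\ge0$ so the $\limsup$ of its $m$-th root is $e^{\mathcal{P}(\varphi,\Psi^{-1}\{g'^{-1}g\}\cap\Sigma^*)}$), one obtains
\[
\limsup_{n\to\infty}\left|\left(S^n\pr g,\pr{g'}\right)\right|^{1/n}\le\left(\sup_{h\in G}e^{\mathcal{P}(\varphi,\Psi^{-1}\{h\}\cap\Sigma^*)}\right)^2.
\]
Taking the supremum over $g,g'$ and combining with the displayed identity for $\rho(S)$ gives $\rho\left(\mathcal{L}_{\varphi\circ\pi_1}\big|_V\right)^2\le\left(\sup_{h\in G}e^{\mathcal{P}(\varphi,\Psi^{-1}\{h\}\cap\Sigma^*)}\right)^2$, i.e. the reverse inequality, completing the proof.

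**Where the difficulty lies.** The routine parts are the construction of $S$, its spectral properties, and the kernel triviality (all bookkeeping with the lemmas already proved). The genuinely delicate step is the middle one: carefully extracting the exponential growth rate through the asymptotic self-adjointness bound while keeping track of the two "bad" sequences $(c_m)$ and $(N_m)$. The point is that $\left(\sum_{i=0}^{N_n}\left(\pr g,T^{n+i}\pr{g'}\right)\right)^{1/n}$ has the same $\limsup$ as a single term because $N_n$ grows subexponentially (indeed $N_n=o(n)$) and the terms $\left(\pr g,T^m\pr{g'}\right)$ are comparable across nearby $m$ up to subexponential factors — this needs the uniform-in-$m$ control that Lemma \ref{lem:perronfrobenius-pressure} provides via the Gibbs property. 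One must also be slightly careful that the relevant $\limsup$s are genuine limits or at least that the inequalities survive passing to $\limsup$; since all pairings involved are of non-negative functions against non-negative functions, positivity of $T$ keeps every quantity $\ge0$ and no sign issues arise.
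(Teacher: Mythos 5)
Your overall strategy -- the upper bound from Corollary \ref{cor:upperboundviaspectralradius}, and a lower bound obtained by feeding a self-adjoint auxiliary operator into Lemma \ref{lem:lowerbound-selfadjoint} and then trading adjoints for powers of $T:=\mathcal{L}_{\varphi\circ\pi_{1}}\big|_{V}$ via asymptotic self-adjointness -- is the right one, but the execution of the key step contains errors that cannot be patched. You work with the single operator $S=T^{*}T$ and assert both $S^{n}=\left(T^{*}\right)^{n}T^{n}$ and $\left(T^{n}f,T^{n}g\right)=\left(T^{2n}f,g\right)$. Both identities are false unless $T$ (resp.\ $T^{n}$) is self-adjoint: $\left(T^{*}T\right)^{n}$ is the alternating product $T^{*}TT^{*}T\cdots T^{*}T$, and $\left(T^{n}f,T^{n}g\right)=\left(\left(T^{n}\right)^{*}T^{n}f,g\right)$, where $\left(T^{n}\right)^{*}T^{n}\neq T^{2n}$. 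Moreover, the inequality you are ultimately driving at, namely $\Vert T\Vert^{2}=\Vert S\Vert=\rho\left(S\right)\le\e^{2\sup_{h}\mathcal{P}\left(\varphi,\Psi^{-1}\left\{ h\right\} \cap\Sigma^{*}\right)}$, is genuinely false in the interesting case: by Proposition \ref{pro:transitionmatrix-via-pfadjoint} one has $\Vert\mathcal{L}_{\varphi\circ\pi_{1}}\big|_{V_{j}}\Vert=1$ always (under the normalization $\mathcal{L}_{\varphi}\1=\1$), while $\rho\left(\mathcal{L}_{\varphi\circ\pi_{1}}\big|_{V_{j}}\right)<1$ whenever $G$ is non-amenable; since Corollary \ref{cor:upperboundviaspectralradius} already gives $\sup_{h}\mathcal{P}\left(\varphi,\Psi^{-1}\left\{ h\right\} \cap\Sigma^{*}\right)\le\log\rho<0$ in that case, your target inequality would force $0=\log\Vert T\Vert\le\sup_{h}\mathcal{P}\left(\varphi,\Psi^{-1}\left\{ h\right\} \cap\Sigma^{*}\right)<0$. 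In other words, the operator norm really does exceed the spectral radius here, so no argument that bounds $\Vert S\Vert=\Vert T\Vert^{2}$ can succeed. A correct expansion of $\left(\left(T^{*}T\right)^{n}\1_{\left\{ \pi_{2}=g\right\} },\1_{\left\{ \pi_{2}=g'\right\} }\right)$ would require $n$ separate applications of (\ref{eq:asymptotically-selfadjoint-condition}) with the fixed parameter $m=1$, and the compounded constants $c_{1}^{n}\left(N_{1}+1\right)^{n}$ survive the $n$-th root, which is another way of seeing that the single-operator approach loses exactly the gap between $\Vert T\Vert$ and $\rho\left(T\right)$.

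The repair is the paper's double-limit structure. One applies Lemma \ref{lem:lowerbound-selfadjoint} not to $S$ but to the whole family $T_{n}:=\left(T^{n}\right)^{*}T^{n}$, and uses Gelfand's formula in the form $\rho\left(T\right)=\lim_{n}\Vert T_{n}\Vert^{1/\left(2n\right)}$. For fixed $n$, the pairing $\left(\left(T_{n}\right)^{k}\1_{\left\{ \pi_{2}=g\right\} },\1_{\left\{ \pi_{2}=g'\right\} }\right)$ is expanded by applying (\ref{eq:asymptotically-selfadjoint-condition}) with parameter $n$ to each of the $k$ factors $\left(T^{n}\right)^{*}$, producing exponents $2nk+\sum_{j}i_{j}$ with $0\le i_{j}\le N_{n}$ and a constant $c_{n}^{k}$; one takes $k$-th roots and lets $k\rightarrow\infty$ using Lemma \ref{lem:perronfrobenius-pressure}, and only afterwards lets $n\rightarrow\infty$, so that $c_{n}^{1/n}\rightarrow1$ and $N_{n}/n\rightarrow0$ eliminate the error terms. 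Your verification that the hypotheses of Lemma \ref{lem:lowerbound-selfadjoint} (positivity and trivial kernel on the positive cone) follow from Lemma \ref{fac:positivity-injectivitiy-of-adjoint-of-pf} is correct and carries over verbatim to each $T_{n}$.
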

\begin{proof}
By Corollary \ref{cor:upperboundviaspectralradius}, we have $\sup_{g\in G}\left\{ \mathcal{P}\left(\varphi,\Psi^{-1}\left\{ g\right\} \cap\Sigma^{*}\right)\right\} \le\log\rho\left(\mathcal{L}_{\varphi\circ\pi_{1}}\big|_{V}\right)$.
Let us turn to the proof of the converse inequality. Using that $\Vert\left(\mathcal{L}_{\varphi\circ\pi_{1}}^{m}\big|_{V}\right)^{*}\mathcal{L}_{\varphi\circ\pi_{1}}^{m}\big|_{V}\Vert=\Vert\mathcal{L}_{\varphi\circ\pi_{1}}^{m}\big|_{V}\Vert^{2}$
for each $m\in\N$, it follows from  Gelfand's formula  (\cite[Theorem 10.13]{MR0365062})  that
\begin{equation}
\rho\left(\mathcal{L}_{\varphi\circ\pi_{1}}\big|_{V}\right)=\lim_{n\rightarrow\infty}\Vert\left(\mathcal{L}_{\varphi\circ\pi_{1}}^{n}\big|_{V}\right)^{*}\mathcal{L}_{\varphi\circ\pi_{1}}^{n}\big|_{V}\Vert^{1/(2n)}.\label{eq:lowerbound-asymptselfadjoint-1}
\end{equation}
Our next aim is to apply Lemma \ref{lem:lowerbound-selfadjoint} to the self-adjoint operator
$T_n:=\left(\mathcal{L}_{\varphi\circ\pi_{1}}^{n}\big|_{V}\right)^{*}\mathcal{L}_{\varphi\circ\pi_{1}}^{n}\big|_{V}$,
for each $n\in\N$. We have to verify that $T_n$
is positive and that $\ker(T_n)\cap V^{+}=\left\{ 0\right\} $,
for each $n\in\N$.  By Lemma \ref{fac:positivity-injectivitiy-of-adjoint-of-pf}, 
we have that $\mathcal{L}_{\varphi\circ\pi_{1}}\big|_{V}$ is positive
and $\ker\left(\mathcal{L}_{\varphi\circ\pi_{1}}\big|_{V}\right)\cap V^{+}=\left\{ 0\right\} $.
Fix some arbitrary order for the elements in $G$, say  $G=\left\{ g_{i}:i\in\N\right\} $. Using that $V$ is a closed linear subspace containing $\left\{ \pr{g_{i}}:i\in\N\right\} $,
we obtain that $g:=\sum_{j\in\N}2^{-j}\1_{\left\{ \pi_{2}=g_{j}\right\} }>0$
is an element of $V$. Since  $\left\{ f^{-}:f\in V\right\} \subset V$
by our assumptions, Lemma \ref{fac:positivity-injectivitiy-of-adjoint-of-pf}
implies that  $\left(\mathcal{L}_{\varphi\circ\pi_{1}}\big|_{V}\right)^{*}$
is positive with $\ker\left(\mathcal{L}_{\varphi\circ\pi_{1}}\big|_{V}\right)^{*}\cap V^{+}=\left\{ 0\right\} $.
Hence, for each $n\in\N$ we have that $T_n$
is positive and $\ker(T_n)\cap V^{+}=\left\{ 0\right\} $.
Consequently, it follows from  Lemma \ref{lem:lowerbound-selfadjoint} that  for each $n\in\N$ we have
\begin{eqnarray}
 &  & \Vert\left(\mathcal{L}_{\varphi\circ\pi_{1}}^{n}\big|_{V}\right)^{*}\mathcal{L}_{\varphi\circ\pi_{1}}^{n}\big|_{V}\Vert=\sup_{g,g'\in G}\left\{ \limsup_{k\rightarrow\infty}\left(\left(\left(\mathcal{L}_{\varphi\circ\pi_{1}}^{n}\big|_{V}\right)^{*}\mathcal{L}_{\varphi\circ\pi_{1}}^{n}\big|_{V}\right)^{k}\left(\pr g\right),\pr{g'}\right)^{1/k}\right\} .\label{eq:lowerbound-asymptselfadjoint-2}
\end{eqnarray}
Let $g,g'\in G$ be given. Using that $\mathcal{L}_{\varphi\circ\pi_{1}}\big|_{V}$
is asymptotically  self-adjoint, with sequences $\left(c_{m}\right)_{m\in\N}\in\R^{\N}$
and $\left(N_{m}\right)_{m\in\N}\in\N_0^{\N}$ as in Definition \ref{def:asymptotically-sefadjoint},
we estimate for all $n\in\N$ that
\begin{eqnarray*}
 &  & \limsup_{k\rightarrow\infty}\big(\left(\left(\mathcal{L}_{\varphi\circ\pi_{1}}^{n}\big|_{V}\right)^{*}\mathcal{L}_{\varphi\circ\pi_{1}}^{n}\big|_{V}\right)^{k}\left(\pr g\right),\pr{g'}\big)^{1/k}\\
 & \le & \limsup_{k\rightarrow\infty}\big(c_{n}^{k}\sum_{i_{1}=0}^{N_{n}}\sum_{i_{2}=0}^{N_{n}}\dots\sum_{i_{k}=0}^{N_{n}}\big(\mathcal{L}_{\varphi\circ\pi_{1}}^{2nk+\sum_{j=1}^{k}i_{j}}\big|_{V}\left(\pr g\right),\pr{g'}\big)\big)^{1/k}\\
 & \le & c_{n}\limsup_{k\rightarrow\infty}\big(\left(N_{n}+1\right)^{k}\max_{\left(i_{1},\dots,i_{k}\right)\in\left\{ 0,\dots,N_{n}\right\} ^{k}}\big\{\big(\mathcal{L}_{\varphi\circ\pi_{1}}^{2nk+\sum_{j=1}^{k}i_{j}}\big|_{V}\left(\pr g\right),\pr{g'}\big)\big\}\big)^{1/k}.
\end{eqnarray*}
Let $\epsilon>0$. Since we have $\limsup_{m\rightarrow\infty}\left(\mathcal{L}_{\varphi\circ\pi_{1}}^{m}\big|_{V}\left(\pr g\right),\pr{g'}\right)^{1/m}=\e^{\mathcal{P}\left(\varphi,\Psi^{-1}\left\{ g^{-1}g'\right\} \cap\Sigma^{*}\right)}$
by Lemma \ref{lem:perronfrobenius-pressure}, we obtain that
\begin{align*}
 & \limsup_{k\rightarrow\infty}\big(\max_{\left(i_{1},\dots,i_{k}\right)\in\left\{ 0,\dots,N_{n}\right\} ^{k}}\big\{\big(\mathcal{L}_{\varphi\circ\pi_{1}}^{2nk+\sum_{j=1}^{k}i_{j}}\big|_{V}\left(\pr g\right),\pr{g'}\big)\big\}\big)^{1/k}\le\\
 & \limsup_{k\rightarrow\infty}\big(\max_{\left(i_{1},\dots,i_{k}\right)\in\left\{ 0,\dots,N_{n}\right\} ^{k}}\max\big\{\e^{\left(2nk+kN_{n}\right)\left(\mathcal{P}\left(\varphi,\Psi^{-1}\left\{ g^{-1}g'\right\} \cap\Sigma^{*}\right)+\epsilon\right)},\e^{2nk\left(\mathcal{P}\left(\varphi,\Psi^{-1}\left\{ g^{-1}g'\right\} \cap\Sigma^{*}\right)+\epsilon\right)}\big\}\big)^{1/k}.
\end{align*}
Since $\epsilon>0$ was chosen to be arbitrary, our previous estimates
imply that for each $n\in\N$ and for all $g,g'\in G$ we have
\begin{eqnarray}
 &  & \limsup_{k\rightarrow\infty}\big(\left(\left(\mathcal{L}_{\varphi\circ\pi_{1}}^{n}\big|_{V}\right)^{*}\mathcal{L}_{\varphi\circ\pi_{1}}^{n}\big|_{V}\right)^{k}\left(\pr g\right),\pr{g'}\big)^{1/k}\label{eq:lower-bound-estimate}\\
 & \le & c_{n}\left(N_{n}+1\right)\max\left\{ \e^{\left(2n+N_{n}\right)\mathcal{P}\left(\varphi,\Psi^{-1}\left\{ g^{-1}g'\right\} \cap\Sigma^{*}\right)},\e^{2n\mathcal{P}\left(\varphi,\Psi^{-1}\left\{ g^{-1}g'\right\} \cap\Sigma^{*}\right)}\right\} .\nonumber
\end{eqnarray}
Combining (\ref{eq:lowerbound-asymptselfadjoint-1}), (\ref{eq:lowerbound-asymptselfadjoint-2})
and (\ref{eq:lower-bound-estimate}), we obtain that
\begin{eqnarray*}
 &  & \rho\left(\mathcal{L}_{\varphi\circ\pi_{1}}\big|_{V}\right)=\lim_{n\rightarrow\infty}\Vert\left(\mathcal{L}_{\varphi\circ\pi_{1}}^{n}\big|_{V}\right)^{*}\mathcal{L}_{\varphi\circ\pi_{1}}^{n}\big|_{V}\Vert^{1/(2n)}\\
 & \le & \limsup_{n\rightarrow\infty}\big(c_{n}\left(N_{n}+1\right)\sup_{g,g'\in G}\max\big\{\e^{\left(2n+N_{n}\right)\mathcal{P}\left(\varphi,\Psi^{-1}\left\{ g^{-1}g'\right\} \cap\Sigma^{*}\right)},\e^{2n\mathcal{P}\left(\varphi,\Psi^{-1}\left\{ g^{-1}g'\right\} \cap\Sigma^{*}\right)}\big\}\big)^{1/(2n)}\\
 & \le & \lim_{n\rightarrow\infty}\left(c_{n}\left(N_{n}+1\right)\right)^{1/(2n)}\sup_{g\in G}\max\big\{\e^{\left(1+N_{n}/(2n)\right)\mathcal{P}\left(\varphi,\Psi^{-1}\left\{ g\right\} \cap\Sigma^{*}\right)},\e^{\mathcal{P}\left(\varphi,\Psi^{-1}\left\{ g\right\} \cap\Sigma^{*}\right)}\big\}.
\end{eqnarray*}
Since $\lim_{n\rightarrow \infty}\left(c_{n}\right)^{1/n}=1$ and $\lim_{n\rightarrow \infty}n^{-1}N_{n}=0$,
the proof is complete.
\end{proof}
In the following definition, we introduce certain important closed linear subspaces of the space $L^{2}\left(\Sigma\times G,\mu_{\varphi}\times\lambda\right)$. 
\begin{defn}
\label{def:C-k-measurable-subspaces} For $j\in\N_{0}$,  let $V_{j}\subset L^{2}\left(\Sigma\times G,\mu_{\varphi}\times\lambda\right)$
denote the subspace consisting of all $f\in L^{2}\left(\Sigma\times G,\mu_{\varphi}\times\lambda\right)$
which possess a $\mathcal{C}(j)\otimes\mathcal{B}\left(G\right)$-measurable
representative in $L^{2}\left(\Sigma\times G,\mu_{\varphi}\times\lambda\right)$, where $\mathcal{C}(j)\otimes\mathcal{B}\left(G\right)$ denotes
the product $\sigma$-algebra of $\mathcal{C}(j)$ and the Borel $\sigma$-algebra  $\mathcal{B}\left(G\right)$ on $G$.
\end{defn}
Note that $V_{j}$ is a Hilbert space for each $j\in\N_{0}$. The next
lemma gives an invariance property for $V_{j}$ with respect to $\mathcal{L}_{\varphi\circ\pi_{1}}$
for $\mathcal{C}(k)$-measurable potentials $\varphi$.
\begin{lem}
\label{lem:vp_invariantsubspaces}Let $\varphi:\Sigma\rightarrow\R$
be $\mathcal{C}(k)$-measurable for some $k\in\N_{0}$. Then $V_{j}$
is $\mathcal{L}_{\varphi\circ\pi_{1}}$-invariant for each $j\in\N$
with $j\ge k-1$. Moreover, for all $j\in\N_{0}$ we have that $U\left(V_{j}\right)\subset V_{j+1}$. \end{lem}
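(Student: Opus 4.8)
The plan is to reduce both statements to the explicit pointwise formula for $\mathcal{L}_{\varphi\circ\pi_{1}}$ given by Lemma~\ref{fac:pf-fact}\,(\ref{enu:Uadjoint-is-PF}), and then to count coordinates. Recall that a function on $\Sigma$ is $\mathcal{C}(\ell)$-measurable exactly when it depends only on the first $\ell$ coordinates, so that an element of $V_{j}$ always admits a representative of the form $(\omega,g)\mapsto\tilde{f}(\omega_{1},\dots,\omega_{j},g)$, measurable in these arguments.

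For the first assertion I would fix $j\in\N$ with $j\ge k-1$ and $f\in V_{j}$, pick a $\mathcal{C}(j)\otimes\mathcal{B}(G)$-measurable representative $\tilde{f}$ of $f$, and apply Lemma~\ref{fac:pf-fact}\,(\ref{enu:Uadjoint-is-PF}), which yields, for $(\mu_{\varphi}\times\lambda)$-almost every $(\omega,g)\in\Sigma\times G$,
\[
\mathcal{L}_{\varphi\circ\pi_{1}}(f)(\omega,g)=\sum_{i\in I:\,i\omega_{1}\in\Sigma^{2}}\e^{\varphi(i\omega)}\,\tilde{f}\!\left(i\omega,g\Psi(i)^{-1}\right).
\]
The point is that the function on the right-hand side is itself $\mathcal{C}(j)\otimes\mathcal{B}(G)$-measurable. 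Indeed, the index set $\{i\in I:i\omega_{1}\in\Sigma^{2}\}$ depends on $\omega$ only through $\omega_{1}$; as $\varphi$ is $\mathcal{C}(k)$-measurable and the first coordinate of $i\omega$ is $i$, the factor $\e^{\varphi(i\omega)}$ depends on $\omega$ only through $\omega_{1},\dots,\omega_{k-1}$; and since $\tilde{f}$ depends on $\Sigma$ only through the first $j$ coordinates, the factor $\tilde{f}(i\omega,g\Psi(i)^{-1})$ depends on $\omega$ only through $\omega_{1},\dots,\omega_{j-1}$. Since $I$ is finite, the sum is a finite sum of $\mathcal{C}\!\left(\max\{1,k-1,j-1\}\right)\otimes\mathcal{B}(G)$-measurable functions, and because $j\ge\max\{1,k-1\}$ this maximum is at most $j$. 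Hence $\mathcal{L}_{\varphi\circ\pi_{1}}(f)$ has a $\mathcal{C}(j)\otimes\mathcal{B}(G)$-measurable representative, that is, $\mathcal{L}_{\varphi\circ\pi_{1}}(f)\in V_{j}$, which is the claimed invariance.

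The second assertion is more direct. For $f\in V_{j}$ with $\mathcal{C}(j)\otimes\mathcal{B}(G)$-measurable representative $\tilde{f}$ we have $U(f)(\omega,g)=\tilde{f}(\sigma(\omega),g\Psi(\omega_{1}))$; the first $j$ coordinates of $\sigma(\omega)$ are $\omega_{2},\dots,\omega_{j+1}$, so $U(f)$ depends on $\omega$ only through $\omega_{1},\dots,\omega_{j+1}$ and hence lies in $V_{j+1}$. This argument is valid for every $j\in\N_{0}$, the case $j=0$ amounting to $\tilde{f}$ being merely a Borel function of $g$.

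I do not expect a genuine obstacle in this lemma; the work is purely bookkeeping. The two points that need a little care are, first, that the identity in Lemma~\ref{fac:pf-fact}\,(\ref{enu:Uadjoint-is-PF}) holds only almost everywhere, so one must phrase the argument in terms of a fixed measurable representative $\tilde{f}$ and then note that the resulting genuinely measurable function is a representative of $\mathcal{L}_{\varphi\circ\pi_{1}}(f)$; and second, the shift in coordinate indices caused by prepending the letter $i$ to $\omega$ (respectively by passing from $\omega$ to $\sigma(\omega)$), which is precisely what produces the threshold $j\ge k-1$ and the upward shift $V_{j}\mapsto V_{j+1}$.
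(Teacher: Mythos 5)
Your proof is correct and follows essentially the same route as the paper's: apply the pointwise formula of Lemma \ref{fac:pf-fact}\,(\ref{enu:Uadjoint-is-PF}) and observe that the right-hand side depends only on $g$ and on $\omega_{1},\dots,\omega_{\max\{1,k-1,j-1\}}$, while the second assertion is immediate from the definition of $U$. Your version merely spells out the bookkeeping (the measurable representative and the coordinate shift) in more detail than the paper does.
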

\begin{proof}
If $f$ is $\mathcal{C}\left(j\right)$-measurable, $j\in \N_0$,  then it follows from Lemma \ref{fac:pf-fact}
(\ref{enu:Uadjoint-is-PF})  that $\mathcal{L}_{\varphi\circ\pi_{1}}\left(f\right)$
is given by
\[
\mathcal{L}_{\varphi\circ\pi_{1}}\left(f\right)\left(\omega,g\right)=\sum_{i\in I:i\omega_{1}\in\Sigma^{2}}\e^{\varphi\left(i\omega\right)}f\left(i\omega,g\Psi\left(i\right)^{-1}\right).
\]
Note that the right-hand side of the previous equation depends only  on
$g\in G$ and on the elements $\omega_{1},\dots,\omega_{\max\left\{ k-1,j-1,1\right\} }\in I$.
Consequently, for $j\in\N$ with $j\ge k-1$, we have that $V_{j}$
is $\mathcal{L}_{\varphi\circ\pi_{1}}$-invariant.

The remaining assertion follows immediately from the definition of
$U$.
\end{proof}
We need the following notion of symmetry.
\begin{defn}\label{def:symmetry} We say that $\varphi:\Sigma\rightarrow\R$
is \emph{asymptotically symmetric with respect to $\Psi$ }if there exist
sequences $\left(c_{m}\right)_{m\in\N}\in\left(\R^{+}\right)^{\N}$
and $\left(N_{m}\right)_{m\in\N}\in\N_0^{\N}$ with the property that
$\lim_{m}\left(c_{m}\right)^{1/m}=1$, $\lim_{m}m^{-1}N_{m}=0$ and
such that for each $g\in G$ and for all $n\in\N$ we have
\begin{equation}
\sum_{\omega\in\Sigma^{n}:\Psi\left(\omega\right)=g}\e^{S_{\omega}\varphi}\le c_{n}\sum_{i=0}^{N_{n}}\sum_{\tau\in\Sigma^{n+i}:\Psi\left(\tau\right)=g^{-1}}\e^{S_{\tau}\varphi}.\label{eq:symmetry-definition}
\end{equation}
\end{defn}
\begin{rem}
\label{rem:symmetry-invariant-coboundary}If $\varphi$ is asymptotically
symmetric with respect to $\Psi$,  then it is straightforward to verify
that, for each $\psi:\Sigma\rightarrow\R^{+}$ H\"older continuous and
$c\in\R$, we have that  also $\varphi+\log\psi-\log\psi\circ\sigma+c$ is asymptotically
symmetric with respect to $\Psi$. Using the Gibbs property (\ref{eq:gibbs-equation})
of $\mu_{\varphi}$, an equivalent way to state that $\varphi$ is
asymptotically symmetric with respect to $\Psi$ is the following: there exist
sequences $\left(c_{m}'\right)_{m\in\N}\in\left(\R^{+}\right)^{\N}$
and $\left(N_{m}'\right)_{m\in\N}\in\N_0^{\N}$ with the property that
$\lim_{m}\left(c_{m}'\right)^{1/m}=1$, $\lim_{m}m^{-1}N_{m}'=0$
and such that for each $g\in G$ and for all $n\in\N$ we have
\[
\sum_{\omega\in\Sigma^{n}:\Psi\left(\omega\right)=g}\mu_{\varphi}\left(\left[\omega\right]\right)\le c_{n}'\sum_{i=0}^{N_{n}'}\sum_{\tau\in\Sigma^{n+i}:\Psi\left(\tau\right)=g^{-1}}\mu_{\varphi}\left(\left[\tau\right]\right).
\]

\end{rem}
Next lemma gives a necessary and sufficient condition for $\mathcal{L}_{\varphi\circ\pi_{1}}\big|_{V_{j}}$
to be asymptotically self-adjoint.
\begin{lem}
\label{lem:weaklysymmetry-is-weakselfadjoint}Let $\varphi:\Sigma\rightarrow\R$
be $\mathcal{C}(k)$-measurable,  for some $k\in\N_{0}$. For each
$j\in\N$ with $j\ge k-1$, we then have   that $\varphi$ is asymptotically symmetric with respect to $\Psi$ if and only if  $\mathcal{L}_{\varphi\circ\pi_{1}}\big|_{V_{j}}$ is asymptotically self-adjoint.
\end{lem}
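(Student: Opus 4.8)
The plan is to translate the equivalence into a single combinatorial statement about weighted admissible path sums, and then to recognize asymptotic symmetry as exactly the ``aggregated'' form of asymptotic self-adjointness tested on a convenient dense cone. By Lemma~\ref{lem:vp_invariantsubspaces} the subspace $V_j$ is $\mathcal{L}_{\varphi\circ\pi_1}$-invariant for $j\ge k-1$, so $\mathcal{L}_{\varphi\circ\pi_1}|_{V_j}$ is well defined and positive (one may moreover, using Remark~\ref{rem:symmetry-invariant-coboundary} and the invariance of asymptotic self-adjointness under conjugation by $M_{\psi\circ\pi_1}$ with $\psi$ bounded away from $0$ and $\infty$, reduce to $\mathcal{L}_{\varphi}\1=\1$, which streamlines the bookkeeping). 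The first step is to record the matrix-coefficient formula in the natural dense family: iterating Lemma~\ref{fac:pf-fact}~(\ref{enu:Uadjoint-is-PF}), and using that $\varphi$ is $\mathcal{C}(k)$-measurable with $k-1\le j$ together with the Gibbs property~\eqref{eq:gibbs-equation} of $\mu_\varphi$ (equivalently, the first assertion of Lemma~\ref{lem:perronfrobenius-pressure}), one obtains, for $\omega,\omega'\in\Sigma^j$, $g,g'\in G$, and with multiplicative constants depending only on $j$ and $\varphi$,
\[
\big(\mathcal{L}_{\varphi\circ\pi_1}^{n}\1_{[\omega]\times\{g\}},\ \1_{[\omega']\times\{g'\}}\big)\ \asymp\ A_n(g^{-1}g';\omega,\omega'),\qquad
A_n(h;\omega,\omega'):=\sum_{\substack{\tau\in\Sigma^{n}:\ \tau_{|j}=\omega,\\ a(\tau_n,\omega'_1)=1,\ \Psi(\tau)=h}}\e^{S_\tau\varphi}.
\]
The $\1_{[\omega]\times\{g\}}$ with $\omega\in\Sigma^j$, $g\in G$ span a dense subspace of $V_j$ in which the non-negative elements are precisely the finitely supported non-negative linear combinations, and these are dense in $V_j^+$; hence positivity of $\mathcal{L}_{\varphi\circ\pi_1}$ and bilinearity reduce \eqref{eq:asymptotically-selfadjoint-condition} for $\mathcal{L}_{\varphi\circ\pi_1}|_{V_j}$ to the existence of sequences $(c_n),(N_n)$ with $(c_n)^{1/n}\to1$, $n^{-1}N_n\to0$ such that $A_n(h;\omega,\omega')\le c_n\sum_{i=0}^{N_n}A_{n+i}(h^{-1};\omega',\omega)$ for all $\omega,\omega'\in\Sigma^j$, $h\in G$, $n\in\N$.

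The implication ``asymptotic self-adjointness $\Rightarrow$ asymptotic symmetry'' is then obtained by summing this pairwise estimate over all $\omega,\omega'\in\Sigma^j$: for a fixed $\tau$ the word $\tau_{|j}$ is determined and $\omega'$ ranges over the $\card\{\omega'\in\Sigma^j:a(\tau_n,\omega'_1)=1\}\in\{1,\dots,\card(I)^j\}$ admissible continuations, so the left-hand side aggregates to a quantity comparable to $\sum_{\tau\in\Sigma^n,\,\Psi(\tau)=h}\e^{S_\tau\varphi}$ and likewise on the right; absorbing the bounded factor $\card(I)^{2j}$ into $c_n$ gives exactly \eqref{eq:symmetry-definition}.

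For the converse, fix $\omega,\omega'\in\Sigma^j$, write $B_m(g):=\sum_{\tau\in\Sigma^m,\,\Psi(\tau)=g}\e^{S_\tau\varphi}$, and note that, $I$ being finite, item~(3) of the standing assumptions of this section supplies for each pair $(a,b)\in I^2$ a fixed bridging word $\gamma_{a,b}\in\Psi^{-1}\{\id\}\cap\Sigma^*\cup\{\emptyset\}$ with $a\gamma_{a,b}b\in\Sigma^*$, of length at most some $R_0$. I would then run the chain: (i)~$A_n(h;\omega,\omega')\le C\,B_n(h)$, the first sum being a sub-sum of the second up to a bounded factor; (ii)~appending to each admissible $\eta\in\Sigma^n$ with $\Psi(\eta)=h$ the bridge $\gamma_{\eta_n,\omega'_1}$ followed by $\omega'$ yields $B_n(h)\le C\sum_{r=0}^{R_0}B_{n+j+r}(h\Psi(\omega'))$; (iii)~asymptotic symmetry applied at the group element $h\Psi(\omega')$ turns the latter into a bounded sum of terms $B_m(\Psi(\omega')^{-1}h^{-1})$; (iv)~prepending to each admissible $\rho$ of length $\ell$ with $\Psi(\rho)=\Psi(\omega')^{-1}h^{-1}$ the word $\omega'$ together with $\Psi$-trivial bridges at both ends produces an admissible word of length $\ell+O(1)$ and $\Psi$-value exactly $h^{-1}$ counted by some $A_{\ell+O(1)}(h^{-1};\omega',\omega)$, so $B_\ell(\Psi(\omega')^{-1}h^{-1})\le C\sum_{r'=0}^{2R_0}A_{\ell+j+r'}(h^{-1};\omega',\omega)$. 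Composing (i)--(iv), re-indexing the bounded and the $o(n)$ additive shifts into single sequences, and using that $\Psi(\Sigma^j)$ is finite so the constants can be chosen uniformly in $\omega,\omega'$, yields the required pairwise estimate, hence \eqref{eq:asymptotically-selfadjoint-condition}.

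The point needing care is the interplay of (ii) and (iii): the prescribed prefix $\omega'$ in the constrained sums $A_\bullet(h^{-1};\omega',\omega)$ twists the operative $\Psi$-value by $\Psi(\omega')$, yet $\Psi(\omega')^{-1}$ need not lie in $\Psi(\Sigma^*)$, so there is no direct passage from $B_n(h)$ to the target. The device is to first \emph{extend} words---which can only insert the factor $\Psi(\omega')\in\Psi(\Sigma^j)\subset\Psi(\Sigma^*)$, never $\Psi(\omega')^{-1}$---and only afterwards \emph{invert} through asymptotic symmetry, so that $h\Psi(\omega')$ is replaced by $\Psi(\omega')^{-1}h^{-1}$. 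The remaining verifications---that each bridging step perturbs $(c_m)$ by a bounded factor and $(N_m)$ by an $O(1)+o(m)$ additive shift, that the $\e^{S_\bullet\varphi}$-weights change only boundedly along bridges (by $\mathcal{C}(k)$-measurability and H\"older continuity of $\varphi$), and that the final re-indexing so that the right-hand sum starts at $i=0$ is free since all terms are non-negative---are routine, as is the treatment of the finitely many small $n$.
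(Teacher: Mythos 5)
Your argument is correct, and its skeleton is the same as the paper's: translate matrix coefficients of $\mathcal{L}_{\varphi\circ\pi_{1}}$ into weighted word sums via Lemma \ref{lem:perronfrobenius-pressure} and the Gibbs property \eqref{eq:gibbs-equation}, use bridging words supplied by item (3) of the standing assumptions to pass between constrained and unconstrained sums, and extend from indicator functions to $V_j^{+}$ by the density remark following Definition \ref{def:asymptotically-sefadjoint}. The one genuine divergence is in the converse direction. The paper first observes that asymptotic symmetry forces $\Psi(\omega)^{-1}\in\Psi(\Sigma^{*})$ for every $\omega\in\Sigma^{j}$, picks a word $\kappa(\omega)$ realizing this inverse, and inserts it right after the prescribed prefix to cancel the twist; this yields the comparison \eqref{eq:symmetry-via-perronfrobenius-sums-2} of full-cylinder coefficients by prefix-constrained ones, which is then chained with the symmetry estimate. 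You instead never invoke such inverse-realizing words: your ``extend first, invert through symmetry afterwards'' ordering introduces the factor $\Psi(\omega')$ only by concatenation (always available) and lets the hypothesis perform the group inversion on the aggregate sums, so the prefix $\omega'$ absorbs the twist automatically. Each route has a small advantage: the paper's is shorter once $\kappa(\omega)$ is in hand, yours is self-contained and sidesteps the (easy) lemma that symmetry makes $\Psi(\Sigma^{*})$ closed under inversion. Two minor points: in the forward direction you could simply test the self-adjointness inequality on $\1_{\Sigma\times\{g\}},\1_{\Sigma\times\{g'\}}\in V_{0}\subset V_{j}$, as the paper does, which also removes the need to treat $n<j$ separately in your summation over $\omega,\omega'\in\Sigma^{j}$; and the routine details you defer (bounded weight distortion along bridges, bounded multiplicity of the concatenation maps, re-indexing of $(c_n)$ and $(N_n)$) are indeed routine and match what the paper absorbs into its constants $C$ and $N$.
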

\begin{proof}
We first observe that by  Lemma \ref{lem:perronfrobenius-pressure} and by the Gibbs property (\ref{eq:gibbs-equation}) of $\mu_{\varphi}$, there exists $K>0$ such that for all $n\in \N$ and for all $g,g' \in G$ we have
\begin{equation}
K^{-1}\le\frac{\left(\mathcal{L}_{\varphi\circ\pi_{1}}^{n}\left(\1_{\Sigma\times\left\{ g\right\} }\right),\1_{\Sigma\times\left\{ g'\right\} }\right)}{\sum_{\tau\in\Sigma^{n}:g\Psi\left(\tau\right)=g'}\e^{S_{\tau}\varphi}}\le K,\label{eq:perronfrobenius-sums}
\end{equation}
unless nominator and denominator in (\ref{eq:perronfrobenius-sums}) are both equal to zero.
From (\ref{eq:perronfrobenius-sums})  we obtain that $\varphi$ is asymptotically symmetric with respect to $\Psi$ if and only if there exist sequences  $\left(c_{m}\right)\in\left(\R^{+}\right)^{\N}$ and $\left(N_{m}\right)\in\N_0^{\N}$,  as in Definition \ref{def:symmetry},   such that for all $n\in\N$ and $g,g'\in G$ we have
\begin{equation}
\left(\mathcal{L}_{\varphi\circ\pi_{1}}^{n}\left(\1_{\Sigma\times\left\{ g\right\} }\right),\1_{\Sigma\times\left\{ g'\right\} }\right)\le c_{n}\left(\1_{\Sigma\times\left\{ g\right\} },\sum_{i=0}^{N_{n}}\mathcal{L}_{\varphi\circ\pi_{1}}^{n+i}\left(\1_{\Sigma\times\left\{ g'\right\} }\right)\right).\label{eq:symmetry-via-perronfrobenius-sums}
\end{equation}
Since $V_0 \subset V_j$ for each $j\in \N_0$, we obtain that if $\mathcal{L}_{\varphi\circ\pi_{1}}\big|_{V_{j}}$ is asymptotically self-adjoint, then $\varphi$ is asymptotically symmetric with respect to $\Psi$.

For the opposite implication, let $j\in \N$, $j\ge k-1$ and suppose that $\varphi$ is asymptotically symmetric with respect to $\Psi$. By Lemma \ref{lem:vp_invariantsubspaces},  we have that  $V_{j}$ is $\mathcal{L}_{\varphi\circ\pi_{1}}$-invariant. Next, we note that since $\varphi$ is asymptotically symmetric with respect to $\Psi$, we have that,  for each $\omega \in \Sigma^j$,  there exists $\kappa(\omega)\in \Sigma^*$ such that $\Psi(\omega)\Psi(\kappa(\omega))=\id$. Combining this with item (3) of our standing assumptions, we conclude that for all $\omega,\omega'\in \Sigma^j$ there exists a finite-to-one map which maps $\tau\in\Sigma^*$ to an element $\omega\gamma_1 \kappa(\omega)\gamma_2 \tau \gamma_3 \omega' \in \Sigma^*$,  where $\Psi(\gamma_i)=\id$ and  $\gamma_i$ depends only on the preceding and successive symbol, for each $i\in \{ 1,2,3\}$. Hence, in view of Lemma \ref{lem:perronfrobenius-pressure} and the Gibbs property (\ref{eq:gibbs-equation}) of $\mu_{\varphi}$, and by using that $\Sigma^j$ is finite, we conclude that there exist $N\in\N$ and $C>0$ (depending on $j$) such that for all $n\in\N$, $g,g'\in G$ and for all $\omega, \omega' \in \Sigma^j$ we have
\begin{equation}
\left(\mathcal{L}_{\varphi\circ\pi_{1}}^{n}\left(\1_{\Sigma\times\left\{ g\right\} }\right),\1_{\Sigma\times\left\{ g'\right\} }\right)\le C \sum_{r=0}^N{\left(\mathcal{L}_{\varphi\circ\pi_{1}}^{n+r}\left(\1_{[\omega]\times\left\{ g\right\} }\right),\1_{[\omega']\times\left\{ g'\right\} }\right)}.
\label{eq:symmetry-via-perronfrobenius-sums-2}
\end{equation}
By first using (\ref{eq:symmetry-via-perronfrobenius-sums}) and then (\ref{eq:symmetry-via-perronfrobenius-sums-2}),  we deduce that for all $n\in\N$, $g,g' \in G$ and for all $\omega,\omega'\in \Sigma^j$, 
\begin{align*}
\left(\mathcal{L}_{\varphi\circ\pi_{1}}^{n}\left(\1_{\Sigma\times\left\{ g\right\} }\right),\1_{\Sigma\times\left\{ g'\right\} }\right) & \le  c_{n}\left(\1_{\Sigma\times\left\{ g\right\} },\sum_{i=0}^{N_{n}}\mathcal{L}_{\varphi\circ\pi_{1}}^{n+i}\left(\1_{\Sigma\times\left\{ g'\right\} }\right)\right) \\
 & \le c_{n} C\sum_{i=0}^{N_{n}} \sum_{r=0}^N{\left( \1_{[\omega]\times\left\{ g\right\} },\mathcal{L}_{\varphi\circ\pi_{1}}^{n+i+r}\left(\1_{[\omega']\times\left\{ g'\right\} }\right) \right)} \\
 & \le c_{n} CN \sum_{i=0}^{N_{n}+N}  {\left( \1_{[\omega]\times\left\{ g\right\} },\mathcal{L}_{\varphi\circ\pi_{1}}^{n+i}\left(\1_{[\omega']\times\left\{ g'\right\} }\right) \right)}.
\end{align*}
Since $\left(\mathcal{L}_{\varphi\circ\pi_{1}}^{n}\left(\1_{[\omega]\times\left\{ g\right\} }\right),\1_{[\omega']\times\left\{ g'\right\} }\right) \le \left(\mathcal{L}_{\varphi\circ\pi_{1}}^{n}\left(\1_{\Sigma\times\left\{ g\right\} }\right),\1_{\Sigma\times\left\{ g'\right\} }\right)$  for all $\omega, \omega' \in \Sigma^j$, it follows that  $\mathcal{L}_{\varphi\circ\pi_{1}}\big|_{V_{j}}$ is asymptotically self-adjoint with respect to the sequences $\left(c_{m}'\right)\in\left(\R^{+}\right)^{\N}$ and $\left(N_{m}'\right)\in\N_0^{\N}$,   which are  given by  $c_{m}':=c_{m}CN$ and $N_{m}':=N_{m}+N$.  The proof is complete.
\end{proof}
The following corollary is a consequence of Proposition \ref{pro:lowerbound-asymptselfadjoint} 
and clarifies the relation between $\sup_{g\in G}\left\{ \mathcal{P}\left(\varphi,\Psi^{-1}\left\{ g\right\} \cap\Sigma^{*}\right)\right\} $
and the spectral radius of $\mathcal{L}_{\varphi\circ\pi_{1}}\big|_{V_{j}}$
provided that $\varphi$ is asymptotically symmetric with respect to $\Psi$.
\begin{cor}
\label{cor:pressure-is-spectralradius}Let $\varphi:\Sigma\rightarrow\R$
be $\mathcal{C}(k)$-measurable, for some $k\in\N_{0}$, and suppose
that $\varphi$ is asymptotically symmetric with respect to $\Psi$. For 
 each $j\in\N$ with $j\ge k-1$,  we then have that
\[
\sup_{g\in G}\left\{ \mathcal{P}\left(\varphi,\Psi^{-1}\left\{ g\right\} \cap\Sigma^{*}\right)\right\} =\log\rho\left(\mathcal{L}_{\varphi\circ\pi_{1}}\big|_{V_{j}}\right).
\]
\end{cor}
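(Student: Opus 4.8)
The plan is to pass to a normalised potential for which Proposition \ref{pro:lowerbound-asymptselfadjoint} applies directly, and then to transfer the resulting identity back to $\varphi$. First I would let $h$ be the H\"older continuous eigenfunction of $\mathcal{L}_{\varphi}$ provided by Theorem \ref{thm:perron-frobenius-thm-urbanski} and set
\[\hat{\varphi}:=\varphi+\log h-\log h\circ\sigma-\mathcal{P}\left(\varphi\right).\]
A direct computation with the Perron-Frobenius operator, using $\mathcal{L}_{\varphi}h=\e^{\mathcal{P}\left(\varphi\right)}h$, gives $\mathcal{L}_{\hat{\varphi}}\1=\1$. Since $\hat{\varphi}$ is H\"older continuous and differs from $\varphi$ by a coboundary plus an additive constant, we have $\mu_{\hat{\varphi}}=\mu_{\varphi}$, so that the Hilbert space $L^{2}\left(\Sigma\times G,\mu_{\varphi}\times\lambda\right)$ and the subspaces $V_{j}$ remain unchanged, and $\mathcal{P}\left(\hat{\varphi}\right)=0$. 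By Theorem \ref{thm:perron-frobenius-thm-urbanski} the function $h$ is $\mathcal{C}\!\left(\max\left\{k-1,1\right\}\right)$-measurable, hence $\hat{\varphi}$ is $\mathcal{C}\!\left(\max\left\{k,2\right\}\right)$-measurable, and by Remark \ref{rem:symmetry-invariant-coboundary} the potential $\hat{\varphi}$ is again asymptotically symmetric with respect to $\Psi$.

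Next I would fix $j\in\N$ with $j\ge k-1$ and observe that then $j\ge\max\left\{k-1,1\right\}=\max\left\{k,2\right\}-1$. Hence Lemma \ref{lem:vp_invariantsubspaces} applied to $\hat{\varphi}$ shows that $V_{j}$ is $\mathcal{L}_{\hat{\varphi}\circ\pi_{1}}$-invariant, and Lemma \ref{lem:weaklysymmetry-is-weakselfadjoint} applied to $\hat{\varphi}$ shows that $\mathcal{L}_{\hat{\varphi}\circ\pi_{1}}\big|_{V_{j}}$ is asymptotically self-adjoint. The remaining hypotheses of Proposition \ref{pro:lowerbound-asymptselfadjoint} hold trivially: $V_{j}$ is a closed linear subspace of $L^{2}\left(\Sigma\times G,\mu_{\varphi}\times\lambda\right)$; it contains $\left\{\1_{\left\{\pi_{2}=g\right\}}:g\in G\right\}$, since these functions are $\mathcal{C}(0)\otimes\mathcal{B}\left(G\right)$-measurable; and it is closed under $f\mapsto f^{-}$, since $f^{-}$ is $\mathcal{C}(j)\otimes\mathcal{B}\left(G\right)$-measurable whenever $f$ is. Applying Proposition \ref{pro:lowerbound-asymptselfadjoint} to the pair $\left(\hat{\varphi},V_{j}\right)$ then yields
\[\sup_{g\in G}\left\{\mathcal{P}\left(\hat{\varphi},\Psi^{-1}\left\{g\right\}\cap\Sigma^{*}\right)\right\}=\log\rho\left(\mathcal{L}_{\hat{\varphi}\circ\pi_{1}}\big|_{V_{j}}\right).\]

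It remains to rewrite both sides of this identity in terms of $\varphi$. For the pressure side, since $\log h$ is continuous on the compact space $\Sigma$, one has $S_{\omega}\hat{\varphi}=S_{\omega}\varphi-\left|\omega\right|\mathcal{P}\left(\varphi\right)$ up to an additive error bounded uniformly in $\omega\in\Sigma^{*}$; hence Fact \ref{fac:criticalexponents-via-pressure} (or a direct estimate on the sums defining the induced pressure) gives $\mathcal{P}\left(\hat{\varphi},\mathcal{C}\right)=\mathcal{P}\left(\varphi,\mathcal{C}\right)-\mathcal{P}\left(\varphi\right)$ for every $\mathcal{C}\subset\Sigma^{*}$, so in particular the left-hand side above equals $\sup_{g\in G}\left\{\mathcal{P}\left(\varphi,\Psi^{-1}\left\{g\right\}\cap\Sigma^{*}\right)\right\}-\mathcal{P}\left(\varphi\right)$. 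For the spectral side, substituting $\hat{\varphi}\left(i\omega\right)=\varphi\left(i\omega\right)+\log h\left(i\omega\right)-\log h\left(\omega\right)-\mathcal{P}\left(\varphi\right)$ into the representation of $\mathcal{L}_{\hat{\varphi}\circ\pi_{1}}$ from Lemma \ref{fac:pf-fact} (\ref{enu:Uadjoint-is-PF}) shows that
\[\mathcal{L}_{\hat{\varphi}\circ\pi_{1}}=\e^{-\mathcal{P}\left(\varphi\right)}\left(M_{h\circ\pi_{1}}\right)^{-1}\circ\mathcal{L}_{\varphi\circ\pi_{1}}\circ M_{h\circ\pi_{1}}.\]
Since $h$ is bounded away from $0$ and from $\infty$ and is $\mathcal{C}(j)$-measurable, the operator $M_{h\circ\pi_{1}}$ restricts to an invertible bounded operator on $V_{j}$, so conjugation by it does not alter the spectrum, and $\rho\left(\mathcal{L}_{\hat{\varphi}\circ\pi_{1}}\big|_{V_{j}}\right)=\e^{-\mathcal{P}\left(\varphi\right)}\rho\left(\mathcal{L}_{\varphi\circ\pi_{1}}\big|_{V_{j}}\right)$. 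Inserting these two relations into the displayed identity, the additive term $\mathcal{P}\left(\varphi\right)$ cancels on both sides, and the assertion of the corollary follows.

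The step I expect to need the most care is the bookkeeping of measurability degrees: one must verify that replacing $\varphi$ by $\hat{\varphi}$ raises the relevant index by no more than is absorbed by the standing constraint $j\ge k-1$ together with $j\ge1$, so that both Lemma \ref{lem:vp_invariantsubspaces} and Lemma \ref{lem:weaklysymmetry-is-weakselfadjoint} remain applicable to $\hat{\varphi}$ on $V_{j}$ and $M_{h\circ\pi_{1}}$ genuinely preserves $V_{j}$. Beyond this, the argument is a routine combination of results already established in this section.
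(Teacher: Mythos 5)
Your proposal is correct and follows essentially the same route as the paper: normalise the potential via the eigenfunction $h$ so that $\mathcal{L}_{\tilde{\varphi}}\1=\1$, check that the coboundary modification preserves $\mathcal{C}(k')$-measurability with $k'=\max\{k,2\}$ (so that $j\ge k-1$ and $j\ge 1$ still suffice), invoke Lemmas \ref{lem:vp_invariantsubspaces} and \ref{lem:weaklysymmetry-is-weakselfadjoint} together with Proposition \ref{pro:lowerbound-asymptselfadjoint}, and transfer both sides back via the shift of the induced pressure and the conjugation of $\mathcal{L}_{\varphi\circ\pi_{1}}\big|_{V_{j}}$ by $M_{h\circ\pi_{1}}\big|_{V_{j}}$. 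The measurability bookkeeping you flag as delicate is handled exactly as in the paper and is sound.
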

\begin{proof}
Fix $j\in\N$ with $j\ge k-1$. By Lemma \ref{lem:vp_invariantsubspaces},
we then have that $V_{j}$ is $\mathcal{L}_{\varphi\circ\pi_{1}}$-invariant.
Let us first verify that without loss of generality we may assume
that $\mathcal{L}_{\varphi}\1=\1$. Otherwise, by Theorem \ref{thm:perron-frobenius-thm-urbanski},
there exists a $\mathcal{C}\!\left(\max\left\{ k-1,1\right\} \right)$-measurable
function $h:\Sigma\rightarrow\R^{+}$ with $\mathcal{L}_{\varphi}\left(h\right)=\e^{\mathcal{P}\left(\varphi\right)}h$.
For $\tilde{\varphi}:=\varphi+\log h-\log h\circ\sigma-\mathcal{P}\left(\varphi\right)$,
we then have that $\mathcal{L}_{\tilde{\varphi}}\1=\1$, $\mathcal{P}\left(\tilde{\varphi}\right)=0$
and,  for each $g\in G$,
\[
\mathcal{P}\left(\tilde{\varphi},\Psi^{-1}\left\{ g\right\} \cap\Sigma^{*}\right)=\mathcal{P}\left(\varphi,\Psi^{-1}\left\{ g\right\} \cap\Sigma^{*}\right)-\mathcal{P}\left(\varphi\right).
\]
Further,  we have that 
$\tilde{\varphi}$ is asymptotically  symmetric with respect to $\Psi$,  by Remark \ref{rem:symmetry-invariant-coboundary}.  It
remains to show that $V_{j}$ is $\mathcal{L}_{\tilde{\varphi}\circ\pi_{1}}$-invariant
and that
\begin{equation}
\log\rho\left(\mathcal{L}_{\tilde{\varphi}\circ\pi_{1}}\big|_{V_{j}}\right)=\log\rho\left(\mathcal{L}_{\varphi\circ\pi_{1}}\big|_{V_{j}}\right)-\mathcal{P}\left(\varphi\right).\label{eq:spectrum-under-cocycle}
\end{equation}
Since $h$ is $\mathcal{C}\!\left(\max\left\{ k-1,1\right\} \right)$-measurable,
we have that $V_{j}$ is $M_{h\circ\pi_{1}}$-invariant and,   by the definition of the Perron-Frobenius operator, we obtain that
\[
\mathcal{L}_{\tilde{\varphi}\circ\pi_{1}}\big|_{V_{j}}=\e^{-\mathcal{P}\left(\varphi\right)}\left(M_{h\circ\pi_{1}}\big|_{V_{j}}\right)^{-1}\circ\left(\mathcal{L}_{\varphi\circ\pi_{1}}\big|_{V_{j}}\right)\circ\left(M_{h\circ\pi_{1}}\big|_{V_{j}}\right).
\]
We conclude that $V_{j}$ is $\mathcal{L}_{\tilde{\varphi}\circ\pi_{1}}$-invariant
and that $\mathcal{L}_{\tilde{\varphi}\circ\pi_{1}}\big|_{V_{j}}$ and $\e^{-\mathcal{P}\left(\varphi\right)}\mathcal{L}_{\varphi\circ\pi_{1}}\big|_{V_{j}}$
have the same spectrum. The latter fact gives the equality in  (\ref{eq:spectrum-under-cocycle}).  Hence, we may assume without loss of generality that $\mathcal{L}_{\varphi}\1=\1$.

By Lemma \ref{lem:weaklysymmetry-is-weakselfadjoint},  we have that
$\mathcal{L}_{\varphi\circ\pi_{1}}\big|_{V_{j}}$
is asymptotically self-adjoint. Since the closed linear subspace $V_{j}\subset L^{2}\left(\Sigma\times G,\mu_{\varphi}\times\lambda\right)$
satisfies $\left\{ f^{-}:f\in V_{j}\right\} \subset V_{j}$  and $\left\{ \1_{\left\{ \pi_{2}=g\right\} }:g\in G\right\} \subset V_{j}$,
the assertion of the corollary follows from Proposition \ref{pro:lowerbound-asymptselfadjoint}.
\end{proof}

\begin{rem*}
Note that, in particular, under the assumptions of the previous corollary
we have that $\rho\left(\mathcal{L}_{\varphi\circ\pi_{1}}\big|_{V_{j}}\right)$
is  independent of $j\in\N$ for all $j\ge k-1$.
\end{rem*}

\subsection{Random Walks on Graphs and Amenability\label{sec:Random-Walks-Application}}

In this section we relate the Perron-Frobenius operator to  the transition
operator of a certain random walk on a graph. We start by introducing
the following graphs.
\begin{defn}
\label{def:k-cylinder-graphs}For each $j\in\N_{0}$,  the \emph{$j$-step
graph of $\left(\Sigma\times G,\sigma\rtimes\Psi\right)$} consists
of the vertex set $\Sigma^{j}\times G$ where two vertices $\left(\omega,g\right),\left(\omega',g'\right)\in\Sigma^{j}\times G$
are connected by an edge in $X_{j}$ if and only if
\[
\left(\sigma\rtimes\Psi\right)^{-1}\left(\left[\omega\right]\times\left\{ g\right\} \right)\cap\left(\left[\omega'\right]\times\left\{ g'\right\} \right)\neq\emptyset\,\,\,\mbox{or}\,\,\left(\sigma\rtimes\Psi\right)^{-1}\left(\left[\omega'\right]\times\left\{ g'\right\} \right)\cap\left(\left[\omega\right]\times\left\{ g\right\} \right)\neq\emptyset.
\]
We use $X_{j}\left(\Sigma\times G,\sigma\rtimes\Psi\right)$ or simply
$X_{j}$ to denote this graph.
\end{defn}
Provided that $\Psi\left(\Sigma^{*}\right)=G$, we have that each
$j$-step graph of \emph{$\left(\Sigma\times G,\sigma\rtimes\Psi\right)$}
is connected. Next lemma shows that each of these graphs is roughly
isometric to the Cayley graph of $G$ with respect to $\Psi\left(I\right)\cup\Psi\left(I\right)^{-1}$
denoted by $X\!\left(G,\Psi\left(I\right)\cup\Psi\left(I\right)^{-1}\right)$.
For a similar argument, see \cite{MR2338235}.
\begin{lem}
\label{lem:roughisometric-withcayleygraph}Suppose that $\Psi\left(\Sigma^{*}\right)=G$
and let $j\in\N_{0}$. We then have that the graphs  $X_{j}\left(\Sigma\times G,\sigma\rtimes\Psi\right)$
and $X\negthinspace\left(G,\Psi\left(I\right)\cup\Psi\left(I\right)^{-1}\right)$
are roughly isometric. \end{lem}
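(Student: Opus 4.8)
The plan is to exhibit an explicit rough isometry $\varphi\colon X_j(\Sigma\times G,\sigma\rtimes\Psi)\to X(G,S)$, where $S:=\Psi(I)\cup\Psi(I)^{-1}$, namely the projection $\varphi(\omega,g):=g$ (equivalently $\pi_2$ restricted to the vertex set $\Sigma^j\times G$). Since $\Sigma^j$ is finite, the fibres of $\varphi$ have uniformly bounded cardinality, which should make controlling the multiplicative distortion routine once one understands how a single edge of $X_j$ moves the $G$-coordinate.

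First I would unwind Definition \ref{def:k-cylinder-graphs}: two vertices $(\omega,g),(\omega',g')$ are adjacent in $X_j$ precisely when one of the cylinder sets $[\omega']\times\{g'\}$, $[\omega]\times\{g\}$ meets the $(\sigma\rtimes\Psi)$-preimage of the other. By the formula $(\sigma\rtimes\Psi)(\tau,h)=(\sigma(\tau),h\Psi(\tau_1))$, the condition $(\sigma\rtimes\Psi)^{-1}([\omega']\times\{g'\})\cap([\omega]\times\{g\})\neq\emptyset$ forces $g'=g\Psi(\omega_1)$ and $\sigma[\omega]\cap[\omega']\neq\emptyset$ (i.e. $\omega_2\cdots\omega_j=\omega'_1\cdots\omega'_{j-1}$ and $\omega_1\omega'\in\Sigma^{*}$ up to admissibility bookkeeping). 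The upshot is: if $(\omega,g)\sim(\omega',g')$ in $X_j$ then $g^{-1}g'\in\Psi(I)\cup\Psi(I)^{-1}=S$, so $d_{X(G,S)}(\varphi(\omega,g),\varphi(\omega',g'))\le 1$. An easy induction along a path of length $k$ in $X_j$ then gives $d_{X(G,S)}(g,g')\le k$, hence $d_{X(G,S)}(\varphi(v),\varphi(w))\le d_{X_j}(v,w)$ for all vertices $v,w$; this is the upper Lipschitz bound with $A=1$, $B=0$.

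For the reverse inequality I would argue that $\varphi$ is coarsely surjective with a coarse section, and that a single generator-step in $G$ lifts to a bounded-length path in $X_j$. Given $g\in G$ and $s\in S$, say $s=\Psi(i)$ for some $i\in I$ (the case $s=\Psi(i)^{-1}$ is symmetric): using item (3) of the standing assumptions and $\Psi(\Sigma^*)=G$ (and finiteness of $\Sigma^j$), for each $\omega\in\Sigma^j$ one can find an admissible word realizing a transition whose $\Psi$-value is $\Psi(i)$ modulo identity-valued connector words whose length is bounded independently of $g$ and $\omega$; translating this into $X_j$ shows there is a constant $R=R(j)$ such that whenever $g^{-1}g'\in S$ and $\omega,\omega'\in\Sigma^j$ are arbitrary, $d_{X_j}((\omega,g),(\omega',g'))\le R$. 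Chaining this along a geodesic in $X(G,S)$ from $g$ to $g'$ of length $n=d_{X(G,S)}(g,g')$ yields $d_{X_j}((\omega,g),(\omega',g'))\le Rn+R$ (the extra $R$ absorbing the choice of endpoints in $\Sigma^j$), which is the lower bound $A^{-1}d_{X(G,S)}-A^{-1}B\le d_{X_j}$ with $A=R$, $B=R$; and surjectivity of $\varphi$ on vertex sets (every $g\in G$ is hit) gives the third rough-isometry condition with $B=0$. Finally $X(G,S)$ has bounded geometry since $S$ is finite, and $X_j$ has bounded geometry because $I$ is finite, so Theorem \ref{thm:amenability-is-roughisometry-invariant} will be applicable downstream.

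The main obstacle I expect is the bookkeeping in the reverse direction: one must verify that identity-valued connector words of uniformly bounded length can always be inserted so that a prescribed $\Psi(i)$-step is realized in $X_j$ regardless of which $\omega,\omega'\in\Sigma^j$ one starts and ends at — this is exactly where item (3) of the standing assumptions ($a\gamma b\in\Sigma^*$ with $\Psi(\gamma)=\mathrm{id}$) is used, together with finiteness of $\Sigma^j$ to make the bound uniform. Everything else (the upper bound, bounded geometry, surjectivity) is essentially immediate from the definitions, and I would keep those steps brief, referring to the analogous computation in \cite{MR2338235} as the excerpt already suggests.
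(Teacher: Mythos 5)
Your proposal is correct and follows essentially the same route as the paper: the rough isometry is the projection $\pi_{2}$, the upper bound comes from observing that an edge of $X_{j}$ changes the $G$-coordinate by an element of $\Psi\left(I\right)\cup\Psi\left(I\right)^{-1}$, and the lower bound is obtained by lifting generator steps using a finite set of connector words (supplied by item (3) of the standing assumptions together with $\Psi\left(\Sigma^{*}\right)=G$ and the finiteness of $\Sigma^{j}$), including the word $\kappa\left(\omega\right)$ needed to cancel $\Psi\left(\omega\right)$. The only cosmetic difference is that you chain single lifted steps while the paper concatenates everything into one admissible word $\omega\gamma_{0}\kappa\left(\omega\right)\gamma_{1}\alpha_{1}\cdots\gamma_{m}\alpha_{m}\gamma_{m+1}\omega'$ to read off the explicit constants $A=2L$, $B=3L+j$.
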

\begin{proof}
By identifying $\Sigma^{0}\times G$ with $G$, we clearly have that $X_{0}$ is isometric to $X\!\left(G,\Psi\left(I\right)\cup\Psi\left(I\right)^{-1}\right)$. Suppose now that $j\in\N$.
We show that the map $\pi_{2}:\Sigma^{j}\times G\rightarrow G$,  given
by $\pi_{2}\left(\omega,g\right):=g$, for all $\left(\omega,g\right)\in\Sigma^{j}\times G$,
defines a rough isometry between the metric spaces $\left(\Sigma^{j}\times G,d_{j}\right)$
and $\left(G,d\right)$, where $d_{j}$ denotes the graph metric on
$X_{j}$ and $d$ denotes the graph metric on $X\!\left(G,\Psi\left(I\right)\cup\Psi\left(I\right)^{-1}\right)$.
 Clearly, we have that $\pi_{2}$ is surjective. Further, by the
definition of the edge set of $X_{j}$, we have that if two vertices
$\left(\omega,g\right),\left(\omega',g'\right)\in\Sigma^{j}\times G$
are connected by an edge in $X_{j}$, then $g$ and $g'$ are connected
by an edge in $X\!\left(G,\Psi\left(I\right)\cup\Psi\left(I\right)^{-1}\right)$.
Hence, for all $\left(\omega,g\right),\left(\omega',g'\right)\in\Sigma^{j}\times G$
we have that $d\left(\pi_{2}\left(\omega,g\right),\pi_{2}\left(\omega',g'\right)\right)\le d_{j}\left(\left(\omega,g\right),\left(\omega',g'\right)\right)$.

It remains to show that there exist constants $A,B>0$ such that for
all $\left(\omega,g\right),\left(\omega',g'\right)\in\Sigma^{j}\times G$,
\begin{equation}
d_{j}\left(\left(\omega,g\right),\left(\omega',g'\right)\right)\le Ad\left(\pi_{2}\left(\omega,g\right),\pi_{2}\left(\omega',g'\right)\right)+B.\label{eq:rough-isometry-inequality}
\end{equation}
First note that by our assumptions, there exists a finite set $F\subset\Sigma^{*}$
with the following properties.
\begin{enumerate}
\item For all $\tau\in\Sigma^{j}$ there exists $\kappa\left(\tau\right)\in F$
such that $\Psi\left(\tau\right)\Psi\left(\kappa\left(\tau\right)\right)=\id$,
and for all $h\in\Psi\left(I\right)\cup\Psi\left(I\right)^{-1}$ there
is $\alpha\in F$ such that $\Psi\left(\alpha\right)=h$. (We used
that $\card\left(I\right)<\infty$ and hence, $\card\left(\Sigma^{j}\right)<\infty$,
and that $\Psi\left(\Sigma^{*}\right)=G$.)
\item For all $a,b\in I$ there exists $\gamma\in F\cap\Psi^{-1}\left\{ \id\right\} \cup\left\{ \emptyword\right\} $
such that $a\gamma b\in\Sigma^{*}$. (We used $\card\left(I\right)<\infty$
and item (3) of  our standing assumptions.)
\end{enumerate}
Setting $L:=\max_{\gamma\in F}\left|\gamma\right|$, $A:=2L$ and
$B:=3L+j$, we will show that (\ref{eq:rough-isometry-inequality})
holds. Let $\left(\omega,g\right),\left(\omega',g'\right)\in\Sigma^{j}\times G$
be given. First suppose that $d\left(\pi_{2}\left(\omega,g\right),\pi_{2}\left(\omega',g'\right)\right)=m\in\N$.
Hence, there exist $h_{1},\dots,h_{m}\in\Psi\left(I\right)\cup\Psi\left(I\right)^{-1}$
such that $gh_{1}\cdot\dots\cdot h_{m}=g'$. By property (1) above,
there exist $\alpha_{1},\dots,\alpha_{m}\in F$ such that $\Psi\left(\alpha_{i}\right)=h_{i}$
for all $1\le i\le m$, and there exists $\kappa(\omega)\in F$ such that $\Psi\left(\omega\right)\Psi\left(\kappa\left(\omega\right)\right)=\id$. Then property (2) implies the existence of
$\gamma_{0},\gamma_{1},\dots,\gamma_{m+1}\in F\cap\Psi^{-1}\left\{ \id\right\} \cup\left\{ \emptyword\right\} $
such that $\omega\gamma_{0}\kappa\left(\omega\right)\gamma_{1}\alpha_{1}\gamma_{2}\alpha_{2}\cdot\dots\cdot\gamma_{m}\alpha_{m}\gamma_{m+1}\omega'\in\Sigma^{*}$
and hence,
\[
\left[\omega\gamma_{0}\kappa\left(\omega\right)\gamma_{1}\alpha_{1}\gamma_{2}\alpha_{2}\cdot\dots\cdot\gamma_{m}\alpha_{m}\gamma_{m+1}\omega'\right]\subset\left(\left[\omega\right]\times\left\{ g\right\} \right)\cap\left(\sigma\rtimes\Psi\right)^{-l}\left(\left[\omega'\right]\times\left\{ g'\right\} \right),
\]
where we have set $l:=\left|\omega\gamma_{0}\kappa\left(\omega\right)\gamma_{1}\alpha_{1}\gamma_{2}\alpha_{2}\cdot\dots\cdot\gamma_{m}\alpha_{m}\gamma_{m+1}\right|\le\left(2m+3\right)L+j$.
The inequality in (\ref{eq:rough-isometry-inequality}) follows. Finally,
if $d\left(\pi_{2}\left(\omega,g\right),\pi_{2}\left(\omega',g'\right)\right)=0$
then $g=g'$ and there exist $\gamma_0, \gamma_1 \in F\cap \Psi^{-1}\{\id\}\cup \{\emptyset\}$  such that $\omega \gamma_0 \kappa(\omega) \gamma_1 \omega' \in \Sigma^*$,  which proves  $d_{j}\left(\left(\omega,g\right),\left(\omega',g'\right)\right)\le B$.
 The proof is complete.
\end{proof}
In the following proposition we let $\mathbb{E}\left(\cdot|\mathcal{C}(j)\right):L^{2}\left(\Sigma\times G,\mu_{\varphi}\times\lambda\right)\rightarrow V_{j}$
denote the conditional expectation given $\mathcal{C}(j)$. 
\begin{prop}
\label{pro:transitionmatrix-via-pfadjoint}Suppose that $\Psi\left(\Sigma^{*}\right)=G$.
Let $\varphi:\Sigma\rightarrow\R$ be $\mathcal{C}(k)$-measurable
for some $k\in\N_{0}$, such that $\mathcal{L}_{\varphi}\1=\1$. The
following holds for all $j\in\N$ with $j\ge k-1$. For the bounded
linear operator $\mathbb{E}\left(U\left(\cdot\right)|\mathcal{C}\left(j\right)\right):V_{j}\rightarrow V_{j}$
we have that
\[
\rho\left(\mathbb{E}\left(U\left(\cdot\right)|\mathcal{C}\left(j\right)\right)\right)\le\Vert\mathbb{E}\left(U\left(\cdot\right)|\mathcal{C}\left(j\right)\right)\Vert=1
\]
with equality if and only if $G$ is amenable. In particular, we have that
\[
\rho\left(\mathcal{L}_{\varphi\circ\pi_{1}}\big|_{V_{j}}\right)\le\Vert\mathcal{L}_{\varphi\circ\pi_{1}}\big|_{V_{j}}\Vert=1
\]
with equality if and only if $G$ is amenable. \textup{\emph{}}\end{prop}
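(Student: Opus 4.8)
The plan is to identify the operator $\mathbb{E}(U(\cdot)\mid\mathcal{C}(j))$ restricted to $V_{j}$ with the transition operator of a random walk on the graph $X_{j}$ and then to invoke the Ortner--Woess theorem together with the rough-isometry results collected in Section~\ref{sec:Preliminaries}. Throughout I would write $Q_{j}:=\mathbb{E}(\cdot\mid\mathcal{C}(j))$ for the orthogonal projection of $L^{2}(\Sigma\times G,\mu_{\varphi}\times\lambda)$ onto $V_{j}$ and set $T:=Q_{j}U|_{V_{j}}$. Since $U$ is an isometry (Lemma~\ref{fac:pf-fact}~(\ref{enu:UisIsometry})) and $Q_{j}$ is a projection, $\Vert T\Vert\le1$. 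For the reverse inequality I would use that $U(V_{j-1})\subset V_{j}$ (Lemma~\ref{lem:vp_invariantsubspaces}, applied with index $j-1\in\N_{0}$, using $j\ge1$): then $Tf=Q_{j}Uf=Uf$ for $f\in V_{j-1}$, so $\Vert Tf\Vert=\Vert f\Vert$, and since $V_{j-1}\neq\{0\}$ this gives $\Vert T\Vert=1$.

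Next I would identify $V_{j}$ isometrically with $\ell^{2}(\Sigma^{j}\times G,\nu)$, where $\nu(\omega,g):=\mu_{\varphi}([\omega])$, and check by a direct computation --- unravelling $U$ and the conditional expectation, or using the formula in Lemma~\ref{fac:pf-fact}~(\ref{enu:Uadjoint-is-PF}) --- that under this identification $T$ becomes the transition matrix $P$ sending $(\omega,g)$ to $(\omega_{2}\cdots\omega_{j}a,\,g\Psi(\omega_{1}))$ with probability $\mu_{\varphi}([\omega a])/\mu_{\varphi}([\omega])$ for each $a\in I$ with $\omega a\in\Sigma^{j+1}$, and having all other entries $0$; that $P$ is stochastic follows from $[\omega]=\bigsqcup_{a}[\omega a]$. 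The $\sigma$-invariance of $\mu_{\varphi}$, in the form $\sum_{a}\mu_{\varphi}([a\omega'])=\mu_{\varphi}(\sigma^{-1}[\omega'])=\mu_{\varphi}([\omega'])$, is exactly the statement that $\nu$ is $P$-invariant; and since $\Sigma^{j}$ is finite and $\mu_{\varphi}$ gives positive mass to every cylinder by the Gibbs property, the numbers $\mu_{\varphi}([\omega])$, $\omega\in\Sigma^{j}$, are bounded above and below, so $\nu$ is bounded above and below.

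I would then verify the hypotheses of Theorem~\ref{thm:woess-amenability-randomwalk-characterization} for $P$. One checks that $p((\omega,g),(\omega',g'))>0$ holds precisely when $(\sigma\rtimes\Psi)^{-1}([\omega']\times\{g'\})\cap([\omega]\times\{g\})\neq\emptyset$, so that the graph underlying $P$ is exactly $X_{j}$ of Definition~\ref{def:k-cylinder-graphs}; in particular $P$ has bounded range with respect to $X_{j}$ (with $R=1$). The graph $X_{j}$ has bounded geometry since $I$ is finite, and it is connected since $\Psi(\Sigma^{*})=G$. Uniform irreducibility is the one point needing care: the finitely many nonzero one-step probabilities are bounded below by the Gibbs property and the finiteness of $\Sigma^{j}$, which handles edges arising from a forward transition, while for an edge arising only from a backward transition one produces a path of uniformly bounded length and positive probability joining its endpoints by the same ``filling-in'' construction as in the proof of Lemma~\ref{lem:roughisometric-withcayleygraph} --- inserting, between the finitely many candidate symbolic blocks, connecting words in $\Psi^{-1}\{\id\}$ supplied by item~(3) of the standing assumptions and a word in $\Psi^{-1}\{\Psi(\omega_{1})^{-1}\}$ supplied by $\Psi(\Sigma^{*})=G$. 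Theorem~\ref{thm:woess-amenability-randomwalk-characterization} then yields $\rho(P)=1$ if and only if $X_{j}$ is amenable; by Lemma~\ref{lem:roughisometric-withcayleygraph}, Theorem~\ref{thm:amenability-is-roughisometry-invariant} and Proposition~\ref{pro:groupamenable-iff-graphamenable} (the set $\Psi(I)\cup\Psi(I)^{-1}$ being a finite symmetric generating set of $G$), $X_{j}$ is amenable if and only if $G$ is amenable. Together with $\Vert T\Vert=1$ this proves the first assertion.

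Finally, the ``in particular'' statement follows by duality: since $\mathcal{L}_{\varphi}\1=\1$ we have $h=\1$ and $\mathcal{P}(\varphi)=0$, hence $\mathcal{L}_{\varphi\circ\pi_{1}}=U^{*}$; for $f,g\in V_{j}$ one has $(Tf,g)=(Q_{j}Uf,g)=(Uf,g)=(f,\mathcal{L}_{\varphi\circ\pi_{1}}g)$ with $\mathcal{L}_{\varphi\circ\pi_{1}}g\in V_{j}$ by Lemma~\ref{lem:vp_invariantsubspaces}, so $T^{*}=\mathcal{L}_{\varphi\circ\pi_{1}}|_{V_{j}}$; since an operator and its adjoint have the same norm and the same spectral radius, the statement for $\mathcal{L}_{\varphi\circ\pi_{1}}|_{V_{j}}$ follows from that for $T$. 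I expect the main obstacle to be the uniform-irreducibility verification in the third step --- producing, for every edge of $X_{j}$, a bounded-length positive-probability path in $P$ between its endpoints --- together with the bookkeeping needed to confirm that $\nu(\omega,g)=\mu_{\varphi}([\omega])$ is precisely the $P$-invariant measure required by the Ortner--Woess theorem.
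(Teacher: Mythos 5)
Your proposal is correct and follows essentially the same route as the paper's proof: representing $\mathbb{E}\left(U\left(\cdot\right)|\mathcal{C}\left(j\right)\right)$ as a transition matrix on the vertex set of $X_{j}$ with invariant measure $\nu_{j}\left(\omega,g\right)=\mu_{\varphi}\left(\left[\omega\right]\right)$, verifying the hypotheses of the Ortner--Woess theorem (with the backward-edge case of uniform irreducibility handled by the same filling-in construction as in Lemma \ref{lem:roughisometric-withcayleygraph}), transferring amenability through the rough isometry with the Cayley graph, and passing to $\mathcal{L}_{\varphi\circ\pi_{1}}\big|_{V_{j}}$ by adjointness. The only differences are cosmetic (using $U\left(V_{j-1}\right)\subset V_{j}$ instead of $U\left(V_{0}\right)\subset V_{j}$ for the norm-one claim, and writing out the transition probabilities explicitly), so no further comment is needed.
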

\begin{proof}
Fix $j\in\N$ with $j\ge k-1$. We first observe that for each $f\in V_{j}$
we have that $\mathbb{E}\left(U\left(f\right)|\mathcal{C}\left(j\right)\right)$
is the unique element in $V_{j}$,  such that $\left(\mathbb{E}\left(U\left(f\right)|\mathcal{C}\left(j\right)\right),g\right)=\left(U\left(f\right),g\right)$  for all $g\in V_{j}$.
Since  $\left(U\left(f\right),g\right)=\left(f,\mathcal{L}_{\varphi\circ\pi_{1}}\left(g\right)\right)$ and  $V_{j}$ is $\mathcal{L}_{\varphi\circ\pi_{1}}$-invariant
by Lemma \ref{lem:vp_invariantsubspaces},  we conclude that $\mathbb{E}\left(U\left(\cdot\right)|\mathcal{C}\left(j\right)\right)$
is the adjoint of $\mathcal{L}_{\varphi\circ\pi_{1}}\big|_{V_{j}}$.
Since $U\left(V_{0}\right)\subset V_{1}\subset V_{j}$, we have that
the restriction of $\mathbb{E}\left(U\left(\cdot\right)|\mathcal{C}\left(j\right)\right)$
to $V_{0}$ is equal to $U\big|_{V_{0}}$. Because $U$ is an isometry
by Lemma \ref{fac:pf-fact} (\ref{enu:UisIsometry}), we conclude that
$\Vert\mathcal{L}_{\varphi\circ\pi_{1}}\big|_{V_{j}}\Vert=\Vert\mathbb{E}\left(U\left(\cdot\right)|\mathcal{C}\left(j\right)\right)\Vert=1$.

In order to prove the amenability dichotomy for $\rho\left(\mathbb{E}\left(U\left(\cdot\right)|\mathcal{C}\left(j\right)\right)\right)$
we aim to apply Theorem \ref{thm:woess-amenability-randomwalk-characterization}
to a transition matrix on the vertex set $\Sigma^{j}\times G$ of
the graph $X_{j}$. Since $\left\{ \1_{\left[\omega\right]\times\left\{ g\right\} }:\left(\omega,g\right)\in\Sigma^{j}\times G\right\} $
is a basis of $V_{j}$, we obtain a Hilbert space isomorphism between
$V_{j}$ and $\ell^{2}\left(\Sigma^{j}\times G,\nu_{j}\right)$ by
setting $\nu_{j}\left(\omega,g\right):=\left(\mu_{\varphi}\times\lambda\right)\left(\left[\omega\right]\times\left\{ g\right\} \right)$
for every $\left(\omega,g\right)\in\Sigma^{j}\times G$. Using this
isomorphism and with respect to the canonical basis of $\ell^{2}\left(\Sigma^{j}\times G,\nu_{j}\right)$,
we have that $\mathbb{E}\left(U\left(\cdot\right)|\mathcal{C}\left(j\right)\right)$
is represented by the matrix $P=\left(p\left(\left(\omega,g\right),\left(\omega',g'\right)\right)\right)$
given by
\begin{equation}
p\left(\left(\omega,g\right),\left(\omega',g'\right)\right)=\left(U\1_{\left[\omega'\right]\times\left\{ g'\right\} },\1_{\left[\omega\right]\times\left\{ g\right\} }\right)\left(\left(\mu_{\varphi}\times\lambda\right)\left(\left[\omega\right]\times\left\{ g\right\} \right)\right)^{-1}.\label{eq:transitionmatrix-representation}
\end{equation}
Note that we have chosen the matrix $P$ to act on the left. Summing
over $\left(\omega',g'\right)\in\Sigma^{j}\times G$ in the previous
line, we obtain  that $P$ is a transition matrix on $\Sigma^{j}\times G$.
Using that $\mu_{\varphi}\times\lambda$ is $\left(\sigma\rtimes\Psi\right)$-invariant
by Lemma \ref{lem:mu-phi-prod-counting-is-invariant}, one then deduces
from (\ref{eq:transitionmatrix-representation}) that $\nu_{j}$ is
$P$-invariant. Let us now verify that Theorem \ref{thm:woess-amenability-randomwalk-characterization}
is applicable to the transition matrix $P$ acting on the vertex set
$\Sigma^{j}\times G$ of $X_{j}$. Since $\card\left(I\right)<\infty$, 
we have that $X_{j}$ has bounded geometry. Further, it follows immediately
from the definition of $X_{j}$ that $p\left(\left(\omega,g\right),\left(\omega',g'\right)\right)>0$
implies that $\left(\omega,g\right)\sim\left(\omega',g'\right)$ in
$X_{j}$ and hence, $P$ has bounded range ($R=1$) with respect to
$X_{j}$. It is  also clear from the definition of $\nu_{j}$ that
\[
0<\min_{\omega\in\Sigma^{j}}\mu_{\varphi}\left(\left[\omega\right]\right)=\inf_{\left(\omega,g\right)\in\Sigma^{j}\times G}\nu_{j}\left(\omega,g\right)\le\sup_{\left(\omega,g\right)\in\Sigma^{j}\times G}\nu_{j}\left(\omega,g\right)=\max_{\omega\in\Sigma^{j}}\mu_{\varphi}\left(\left[\omega\right]\right)<\infty.
\]
It remains to verify that $P$ is uniformly irreducible with respect
to $X_{j}$. Let $\left(\omega,g\right),\left(\omega',g'\right)\in\Sigma^{j}\times G$
denote a pair of vertices which is connected by an edge in $X_{j}$.
By definition, we then have that $\left(\sigma\rtimes\Psi\right)^{-1}\left(\left[\omega'\right]\times\left\{ g'\right\} \right)\cap\left(\left[\omega\right]\times\left\{ g\right\} \right)\neq\emptyset\mbox{ or }\left(\sigma\rtimes\Psi\right)^{-1}\left(\left[\omega\right]\times\left\{ g\right\} \right)\cap\left(\left[\omega'\right]\times\left\{ g'\right\} \right)\neq\emptyset$.
In the first case, we have that
\begin{eqnarray*}
p\left(\left(\omega,g\right),\left(\omega',g'\right)\right) & = & \left(\mu_{\varphi}\times\lambda\right)\left(\left(\sigma\rtimes\Psi\right)^{-1}\left(\left[\omega'\right]\times\left\{ g'\right\} \right)\cap\left(\left[\omega\right]\times\left\{ g\right\} \right)\right)\left(\mu_{\varphi}\left(\left[\omega\right]\right)\right)^{-1}\\
 & = & \mu_{\varphi}\left(\left[\omega \omega'_j\right]\right)\left(\mu_{\varphi}\left(\left[\omega\right]\right)\right)^{-1}\ge\min_{\tau\in\Sigma^{j+1}}\mu_{\varphi}\left(\left[\tau\right]\right)>0.
\end{eqnarray*}
Next we consider the second case in which $\left(\sigma\rtimes\Psi\right)^{-1}\left(\left[\omega\right]\times\left\{ g\right\} \right)\cap\left(\left[\omega'\right]\times\left\{ g'\right\} \right)\neq\emptyset$
and thus, $g'\Psi\left(\omega'_{1}\right)=g$. Similarly as in the
proof of Lemma \ref{lem:roughisometric-withcayleygraph} one can verify
that there exists a finite set $F\subset\Sigma^{*}$ with the following
properties. Firstly, for all $\tau\in\Sigma^{j}\cup I$ there exists
$\kappa\left(\tau\right)\in F$ such that $\Psi\left(\tau\right)\Psi\left(\kappa\left(\tau\right)\right)=\id$
and secondly, for all $a,b\in I$ there exists $\gamma\in F\cap\Psi^{-1}\left\{ \id\right\} \cup\left\{ \emptyword\right\} $
such that $a\gamma b\in\Sigma^{*}$. Hence, there exist $\gamma_{1},\gamma_{2},\gamma_{3}\in F$
such that
\[
\left(\left[\omega\gamma_{1}\kappa\left(\omega\right)\gamma_{2}\kappa\left(\omega'_{1}\right)\gamma_{3}\omega'\right]\times\left\{ g\right\} \right)\subset\left(\left[\omega\right]\times\left\{ g\right\} \right)\cap\left(\sigma\rtimes\Psi\right)^{-l}\left(\left[\omega'\right]\times\left\{ g'\right\} \right),
\]
where we have set $l:=\left|\omega\gamma_{1}\kappa\left(\omega\right)\gamma_{2}\kappa\left(\omega'_{1}\right)\gamma_{3}\right|\le j+5\max_{\gamma\in F}\left|\gamma\right|$.
Consequently, 
\[
p^{\left(l\right)}\left(\left(\omega,g\right),\left(\omega',g'\right)\right)\ge \left(\min_{\tau\in\Sigma^{j+1}}\mu_{\varphi}\left(\left[\tau\right]\right)\right)^{j+5\max_{\gamma\in F}\left|\gamma\right|} >0.
\]
Hence, with $K:=j+5\max_{\gamma\in F}\left|\gamma\right|$ and $\epsilon:=\left(\min_{\tau\in\Sigma^{j+1}}\mu_{\varphi}\left(\left[\tau\right]\right)\right)^{j+5\max_{\gamma\in F}\left|\gamma\right|}  >0$
we have that $P$ is uniformly irreducible with respect $X_{j}$.

We are now in the position to apply Theorem \ref{thm:woess-amenability-randomwalk-characterization} to the transition matrix $P$, 
which gives  that $\rho\left(P\right)=1$ if and only if $X_{j}$
is amenable. Since $X_j$ is roughly isometric to the Cayley graph of $G$ with respect to $\Psi\left(I\right)\cup\Psi\left(I\right)^{-1}$ by Lemma \ref{lem:roughisometric-withcayleygraph}, it follows from Theorem \ref{thm:amenability-is-roughisometry-invariant} that $X_j$ is amenable if and only if  $G$ is amenable (cf. Proposition \ref{pro:groupamenable-iff-graphamenable}) .  Finally, since $\mathbb{E}\left(U\left(\cdot\right)|\mathcal{C}\left(j\right)\right)$
and $P$ are conjugated by an isomorphism of Hilbert spaces, we have  $\rho\left(\mathbb{E}\left(U\left(\cdot\right)|\mathcal{C}\left(j\right)\right)\right)=\rho(P)$, which  completes the proof.
\end{proof}
Summarizing the outcomes of this section, we obtain the following
main result.
\begin{thm}
\label{thm:amenability-dichotomy-markov}Suppose that $\Psi\left(\Sigma^{*}\right)=G$
and let $\varphi:\Sigma\rightarrow\R$ be $\mathcal{C}(k)$-measurable
for some $k\in\N_{0}$. The following holds for all $j\in\N$ with
$j\ge k-1$. We have
\begin{equation}
\mathcal{P}\left(\varphi,\Psi^{-1}\left\{ \id\right\} \cap\Sigma^{*}\right)\le\log\rho\left(\mathcal{L}_{\varphi\circ\pi_{1}}\big|_{V_{j}}\right)\le\log\rho\left(\mathcal{L}_{\varphi\circ\pi_{1}}\right)=\mathcal{P}\left(\varphi\right),\label{eq:amenability-dichotomy-1}
\end{equation}
with equality in the second inequality if and only if $G$ is amenable.
Moreover, if $\varphi$ is asymptotically  symmetric with respect to $\Psi$,
then
\begin{equation}
\mathcal{P}\left(\varphi,\Psi^{-1}\left\{ \id\right\} \cap\Sigma^{*}\right)=\log\rho\left(\mathcal{L}_{\varphi\circ\pi_{1}}\big|_{V_{j}}\right)\label{eq:amenability-dicotomy-2}
\end{equation}
and so, $G$ is amenable if and only if $\mathcal{P}\left(\varphi,\Psi^{-1}\left\{ \id \right\} \cap\Sigma^{*}\right)=\mathcal{P}\left(\varphi\right)$. \end{thm}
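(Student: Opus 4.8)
The plan is to assemble the theorem from the machinery built up in this section. I would first prove the chain (\ref{eq:amenability-dichotomy-1}) term by term, which needs no normalization. The rightmost equality $\log\rho(\mathcal{L}_{\varphi\circ\pi_{1}})=\mathcal{P}(\varphi)$ is Lemma \ref{fac:pf-fact} (\ref{enu:.pf-fact-spectralradius-pressure}). For the middle inequality, since $\varphi$ is $\mathcal{C}(k)$-measurable and $j\ge k-1$, Lemma \ref{lem:vp_invariantsubspaces} makes $V_{j}$ an $\mathcal{L}_{\varphi\circ\pi_{1}}$-invariant subspace, so $\Vert\mathcal{L}_{\varphi\circ\pi_{1}}^{n}|_{V_{j}}\Vert\le\Vert\mathcal{L}_{\varphi\circ\pi_{1}}^{n}\Vert$ for every $n$, and Gelfand's formula gives $\rho(\mathcal{L}_{\varphi\circ\pi_{1}}|_{V_{j}})\le\rho(\mathcal{L}_{\varphi\circ\pi_{1}})$. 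The leftmost inequality is the case $g=g'=\id$ of Corollary \ref{cor:upperboundviaspectralradius} applied with $V=V_{j}$: the function $\1_{\{\pi_{2}=\id\}}$ has a $\mathcal{C}(0)\otimes\mathcal{B}(G)$-measurable representative, hence lies in $V_{j}$, and $V_{j}$ is $\mathcal{L}_{\varphi\circ\pi_{1}}$-invariant as just noted.

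Next I would show that equality holds in the second inequality of (\ref{eq:amenability-dichotomy-1}) if and only if $G$ is amenable. For this I would reduce to the case $\mathcal{L}_{\varphi}\1=\1$, as in the proof of Corollary \ref{cor:pressure-is-spectralradius}: with $h$ the $\mathcal{C}(\max\{k-1,1\})$-measurable eigenfunction of $\mathcal{L}_{\varphi}$ from Theorem \ref{thm:perron-frobenius-thm-urbanski} and $\tilde{\varphi}:=\varphi+\log h-\log h\circ\sigma-\mathcal{P}(\varphi)$, one has $\mathcal{L}_{\tilde\varphi}\1=\1$, $\mathcal{P}(\tilde\varphi)=0$ and $\mu_{\tilde\varphi}=\mu_{\varphi}$; since $j\in\N$ we have $j\ge\max\{k-1,1\}$, so $V_{j}$ is $M_{h\circ\pi_{1}}$-invariant, and conjugation by $M_{h\circ\pi_{1}}|_{V_{j}}$ gives $\log\rho(\mathcal{L}_{\tilde\varphi\circ\pi_{1}}|_{V_{j}})=\log\rho(\mathcal{L}_{\varphi\circ\pi_{1}}|_{V_{j}})-\mathcal{P}(\varphi)$, while $\log\rho(\mathcal{L}_{\tilde\varphi\circ\pi_{1}})=\mathcal{P}(\tilde\varphi)=0=\log\rho(\mathcal{L}_{\varphi\circ\pi_{1}})-\mathcal{P}(\varphi)$ by Lemma \ref{fac:pf-fact} (\ref{enu:.pf-fact-spectralradius-pressure}). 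Hence $\rho(\mathcal{L}_{\varphi\circ\pi_{1}}|_{V_{j}})=\rho(\mathcal{L}_{\varphi\circ\pi_{1}})$ if and only if $\rho(\mathcal{L}_{\tilde\varphi\circ\pi_{1}}|_{V_{j}})=1$. As $\mathcal{L}_{\tilde\varphi}\1=\1$ and $j\ge\max\{k-1,1\}$, Proposition \ref{pro:transitionmatrix-via-pfadjoint} applies to $\tilde\varphi$ and gives $\rho(\mathcal{L}_{\tilde\varphi\circ\pi_{1}}|_{V_{j}})\le\Vert\mathcal{L}_{\tilde\varphi\circ\pi_{1}}|_{V_{j}}\Vert=1$ with equality exactly when $G$ is amenable, which is the desired equivalence.

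Finally, suppose $\varphi$ is asymptotically symmetric with respect to $\Psi$. Then Corollary \ref{cor:pressure-is-spectralradius} applies for the given $j$ and yields $\sup_{g\in G}\mathcal{P}(\varphi,\Psi^{-1}\{g\}\cap\Sigma^{*})=\log\rho(\mathcal{L}_{\varphi\circ\pi_{1}}|_{V_{j}})$. To deduce (\ref{eq:amenability-dicotomy-2}) it remains to see that this supremum is attained at $g=\id$, in fact that $\mathcal{P}(\varphi,\Psi^{-1}\{g\}\cap\Sigma^{*})$ is independent of $g\in G$. This is the completion argument used repeatedly above (compare the proofs of Lemma \ref{lem:roughisometric-withcayleygraph} and Lemma \ref{lem:weaklysymmetry-is-weakselfadjoint}): for $g,g'\in G$, fix a word $\tau\in\Sigma^{*}$ with $\Psi(\tau)=g^{-1}g'$ — possible since $\Psi(\Sigma^{*})=G$ — together with connector words of uniformly bounded length, which exist because $I$ is finite and by item (3) of the standing assumptions; then each $\omega\in\Psi^{-1}\{g\}\cap\Sigma^{n}$ is mapped, finite-to-one, to some word in $\Psi^{-1}\{g'\}\cap\Sigma^{m}$ with $n\le m\le n+L$ for a fixed $L$, and by H\"older continuity of $\varphi$ together with the Gibbs property (\ref{eq:gibbs-equation}) the quantity $S_{\omega}\varphi$ changes by no more than a bounded additive constant; comparing the defining sums of the induced pressure — which is independent of the window parameter $\eta$ — gives $\mathcal{P}(\varphi,\Psi^{-1}\{g\}\cap\Sigma^{*})\le\mathcal{P}(\varphi,\Psi^{-1}\{g'\}\cap\Sigma^{*})$, and the reverse inequality follows by symmetry. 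This establishes (\ref{eq:amenability-dicotomy-2}), and combining it with the dichotomy obtained above for the second inequality of (\ref{eq:amenability-dichotomy-1}) we conclude that $\mathcal{P}(\varphi,\Psi^{-1}\{\id\}\cap\Sigma^{*})=\mathcal{P}(\varphi)$ if and only if $\log\rho(\mathcal{L}_{\varphi\circ\pi_{1}}|_{V_{j}})=\log\rho(\mathcal{L}_{\varphi\circ\pi_{1}})$, that is, if and only if $G$ is amenable.

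I expect the main obstacle to be administrative rather than conceptual: the substantial content has already been placed in Proposition \ref{pro:transitionmatrix-via-pfadjoint} (which routes amenability through the Ortner--Woess random-walk dichotomy, Theorem \ref{thm:woess-amenability-randomwalk-characterization}, applied to the graphs $X_{j}$) and in Corollary \ref{cor:pressure-is-spectralradius}. The two steps that still call for care are the reduction to $\mathcal{L}_{\varphi}\1=\1$, where one must verify that every invariance statement for $V_{j}$ stays valid on the intended range $j\ge k-1$ even though the transferred potential $\tilde\varphi$ has measurability index $\max\{k,2\}$ (which is harmless precisely because $j\in\N$), and the completion argument of the previous paragraph showing that the supremum over $g$ in Corollary \ref{cor:pressure-is-spectralradius} is realized at the identity — the one point of the proof not directly supplied by a cited result.
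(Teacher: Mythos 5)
Your proposal is correct and follows essentially the same route as the paper's proof: Corollary \ref{cor:upperboundviaspectralradius} for the first inequality, the invariant-subspace/Gelfand observation for the second, Lemma \ref{fac:pf-fact} for the equality, the reduction to $\mathcal{L}_{\varphi}\1=\1$ followed by Proposition \ref{pro:transitionmatrix-via-pfadjoint} for the amenability dichotomy, and Corollary \ref{cor:pressure-is-spectralradius} plus the independence of $\mathcal{P}(\varphi,\Psi^{-1}\{g\}\cap\Sigma^{*})$ from $g$ for (\ref{eq:amenability-dicotomy-2}). You merely spell out two points the paper leaves implicit (the measurability index of the transferred potential $\tilde{\varphi}$ and the completion argument behind ``one easily verifies''), both handled correctly.
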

\begin{proof}
Fix $j\in\N$ with $j\ge k-1$,  which implies that $V_{j}$ is $\mathcal{L}_{\varphi\circ\pi_{1}}$-invariant
by Lemma \ref{lem:vp_invariantsubspaces}. As shown in the proof of
Corollary \ref{cor:pressure-is-spectralradius} we may assume without
loss of generality that $\mathcal{L}_{\varphi}\1=\1$ and thus $\mathcal{P}\left(\varphi\right)=0$.

The first inequality in (\ref{eq:amenability-dichotomy-1}) follows
from Corollary \ref{cor:upperboundviaspectralradius} applied to $V=V_{j}$.
The second inequality in (\ref{eq:amenability-dichotomy-1}) is an
immediate consequence of the definition of the spectrum. The amenability
dichotomy follows from Proposition \ref{pro:transitionmatrix-via-pfadjoint}.
The equality $\log\rho\left(\mathcal{L}_{\varphi\circ\pi_{1}}\right)=\mathcal{P}\left(\varphi\right)$
follows from Lemma  \ref{fac:pf-fact} (\ref{enu:.pf-fact-spectralradius-pressure}).

In order to complete the proof,  we now address (\ref{eq:amenability-dicotomy-2})
under the assumption that $\varphi$ is asymptotically  symmetric with respect
to $\Psi$. By Corollary \ref{cor:pressure-is-spectralradius},  we
then have that
\[
\sup_{g\in G}\left\{ \mathcal{P}\left(\varphi,\Psi^{-1}\left\{ g\right\} \cap\Sigma^{*}\right)\right\} =\log\rho\left(\mathcal{L}_{\varphi\circ\pi_{1}}\big|_{V_{j}}\right).
\]
Using that $\Psi\left(\Sigma^{*}\right)=G$ and item (3) of our standing assumptions, one easily verifies that the pressure  $\mathcal{P}\left(\varphi,\Psi^{-1}\left\{ g\right\} \cap\Sigma^{*}\right)$
is independent of $g\in G$, which completes the proof.
\end{proof}

\begin{cor}
\label{cor:amenable-implies-fullpressure}
Let $\varphi:\Sigma\rightarrow\R$ be $\mathcal{C}(k)$-measurable, 
for some $k\in\N_{0}$ and assume that $\varphi$ is asymptotically symmetric with respect to $\Psi$. If $G$ is amenable, then $\mathcal{P}\left(\varphi,\Psi^{-1}\left\{ \id \right\} \cap\Sigma^{*}\right)=\mathcal{P}\left(\varphi\right)$.
\end{cor}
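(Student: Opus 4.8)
The plan is to deduce this from Theorem~\ref{thm:amenability-dichotomy-markov}, whose "moreover" part gives exactly the asserted equality \emph{under the additional hypothesis $\Psi(\Sigma^{*})=G$}. Since the corollary drops that hypothesis, the whole content is to reduce to the case $\Psi(\Sigma^{*})=G$ by replacing $G$ with the subgroup of elements actually realized by $\Psi$. So first I would set $G_{0}:=\Psi(\Sigma^{*})$ and show that $G_{0}$ is a subgroup of $G$.

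For closure under multiplication I would use item~(3) of the standing assumptions: given $\omega,\tau\in\Sigma^{*}$, let $a$ be the last letter of $\omega$ and $b$ the first letter of $\tau$, pick $\gamma\in\Psi^{-1}\{\id\}\cap\Sigma^{*}\cup\{\emptyword\}$ with $a\gamma b\in\Sigma^{*}$; then $\omega\gamma\tau\in\Sigma^{*}$ (the only new consecutive pairs are $(a,\gamma_{1})$ and $(\gamma_{|\gamma|},b)$, which are admissible since $a\gamma b\in\Sigma^{*}$) and $\Psi(\omega\gamma\tau)=\Psi(\omega)\Psi(\tau)$, so $\Psi(\omega)\Psi(\tau)\in G_{0}$. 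Closure under inversion is where asymptotic symmetry enters: for $\omega\in\Sigma^{n}$ the left-hand side of (\ref{eq:symmetry-definition}) with $g=\Psi(\omega)$ is at least $\e^{S_{\omega}\varphi}>0$ (as $\varphi$ is bounded on the compact finite-alphabet shift $\Sigma$), hence the right-hand side is positive, so there exist $i\le N_{n}$ and $\tau\in\Sigma^{n+i}$ with $\Psi(\tau)=\Psi(\omega)^{-1}$; thus $\Psi(\omega)^{-1}\in G_{0}$. A nonempty subsemigroup of a group that is closed under inversion is clearly a subgroup, so $G_{0}$ is a subgroup of $G$; in particular $\id\in G_{0}$, i.e.\ $\Psi^{-1}\{\id\}\cap\Sigma^{*}\neq\emptyset$, which also rules out the degenerate situation in which the left-hand pressure is $-\infty$ for trivial reasons.

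Then I would observe that the whole framework of this section, together with the hypotheses of Theorem~\ref{thm:amenability-dichotomy-markov}, persists with $G$ replaced by $G_{0}$ and $\Psi$ regarded as a homomorphism $I^{*}\to G_{0}$: items~(1),(2),(4) are unchanged; item~(3) only refers to $\Psi^{-1}\{\id\}\cap\Sigma^{*}$ and is unaffected; the $\mathcal{C}(k)$-measurability of $\varphi$ is unchanged; asymptotic symmetry of $\varphi$ with respect to $\Psi\colon I^{*}\to G_{0}$ is just the restriction to $g\in G_{0}$ of asymptotic symmetry with respect to $\Psi\colon I^{*}\to G$, using the same sequences $(c_{m}),(N_{m})$; and now $\Psi(\Sigma^{*})=G_{0}$. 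Moreover $G_{0}$ is amenable, being a subgroup of the amenable group $G$. Applying the "moreover" part of Theorem~\ref{thm:amenability-dichotomy-markov} to $(\Sigma,G_{0},\Psi,\varphi)$ then yields $\mathcal{P}(\varphi,\Psi^{-1}\{\id\}\cap\Sigma^{*})=\mathcal{P}(\varphi)$ (noting $\Psi^{-1}\{\id\}$ is the same set whether $\Psi$ is viewed into $G$ or $G_{0}$), which is the claim.

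The main obstacle is really just the subgroup claim in the second paragraph; I expect the use of asymptotic symmetry to obtain closure under inversion, and of item~(3) to obtain closure under multiplication, to be the only non-bookkeeping steps, everything else being a routine verification that the reduction to $G_{0}$ preserves all hypotheses.
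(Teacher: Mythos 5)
Your proposal is correct and takes essentially the same route as the paper: the paper's proof likewise sets $G':=\Psi(\Sigma^{*})$, notes that item (3) of the standing assumptions together with asymptotic symmetry makes $G'$ a subgroup of $G$, invokes amenability of subgroups of amenable groups, and then applies Theorem \ref{thm:amenability-dichotomy-markov}. Your write-up simply supplies the details of the subgroup verification that the paper leaves to the reader.
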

\begin{proof}
Using item (3) of our standing assumptions and that $\varphi$ is asymptotically symmetric with respect to $\Psi$, one verifies that $G':=\Psi(\Sigma^*)$ is a subgroup of $G$. Since $G$ is amenable, it is well-known that also $G'$ is amenable (see e.g. \cite[Theorem 12.2 (c)]{MR1743100}), and the corollary follows from Theorem \ref{thm:amenability-dichotomy-markov}.
\end{proof}

\begin{rem}
\label{proof-comment-stadlabuer}It is not difficult to extend Corollary  \ref{cor:amenable-implies-fullpressure} to arbitrary  H\"older
continuous potentials by approximating a  H\"older
continuous potential by a $\mathcal{C}\left(k\right)$-measurable
potential and then  letting $k$ tend to infinity.  One obtains that,  for an amenable group 
$G$ and for an asymptotically symmetric   H\"older continuous potential $\varphi$,  we have  $\mathcal{P}\left(\varphi,\Psi^{-1}\left\{ \id\right\} \cap\Sigma^{*}\right)=\mathcal{P}\left(\varphi\right)$.  This was proved by the author in  \cite[Theorem 5.3.11]{JaerischDissertation11}, and independently, by Stadlbauer  \cite[Theorem 4.1]{Stadlbauer11} in a slightly different setting.
The reversed implication of Corollary  \ref{cor:amenable-implies-fullpressure} 
was proved recently in \cite[Theorem 5.4]{Stadlbauer11} by extending ideas of Day (\cite{MR0159230}).  A generalization of (\ref{eq:amenability-dicotomy-2}) in Theorem \ref{thm:amenability-dichotomy-markov} for arbitrary H\"older continuous potentials seems still to be open. 
\end{rem}

\section{Proof of the Main Results\label{sec:Proofs}}

For a linear GDMS $\Phi$ associated to $\F_{d}=\langle g_{1},\dots,g_{d}\rangle$,
$d\ge2$, we set $I:=\left\{ g_1,g_{1}^{-1},\dots,g_d,g_{d}^{-1}\right\} $
and we consider the Markov shift $\Sigma$,  given by
\[
\Sigma:=\left\{ \omega\in I^{\N}:\,\,\omega_{i}\neq\left(\omega_{i+1}\right)^{-1}\,\,\mbox{for all }i\in\N\right\} .
\]
The involution $\kappa:\Sigma^{*}\rightarrow\Sigma^{*}$ is given
by $\kappa\left(\omega\right):=\left(\omega_{n}^{-1},\omega_{n-1}^{-1},\dots,\omega_{1}^{-1}\right)$,
for all $n\in\N$ and $\omega\in\Sigma^{n}$.

For a normal subgroup $N$ of $\F_{d}$,  we let $\Psi_{N}:I^{*}\rightarrow\F_{d}/N$
denote the unique semigroup homomorphism such that $\Psi_{N}\left(g\right)=g\mbox{ mod }N$
for all $g\in I$. Clearly, we have that
\begin{equation}
\Psi_{N}\left(\Sigma^{*}\right)=\F_{d}/N.\label{eq:proof-psi-onto}
\end{equation}
Since the assertions in Theorem \ref{thm:lineargdms-amenability-dichotomy} and Proposition \ref{pro:lineargdms-brooks} are clearly satisfied in the case that $N=\left\{ \id\right\}$, we will from now on  assume that $N\neq\left\{ \id\right\} $. Using that $N$ is a  normal subgroup of $\F_{d}$ and $d\ge2$, one easily verifies that there exists a
finite set $F\subset\Sigma^{*}\cap\Psi_{N}^{-1}\left\{ \id\right\} $
with the following property:
\begin{equation}
\mbox{For all }i,j\in I\mbox{ there exists }\tau\in F\cup\left\{ \emptyword\right\} \mbox{ such that }i\tau j\in\Sigma^{*}.\label{eq:proof-mixingproperty}
\end{equation}
Note that (\ref{eq:proof-mixingproperty}) implies that the group-extended Markov system $\left(\Sigma\times\left(\F_{d}/N\right),\sigma\rtimes\Psi_{N}\right)$
 satisfies  item (3) of our standing
assumptions at the beginning of Section 4. Hence, the results of
Section 3 are applicable to  the $\mathcal{C}\left(1\right)$-measurable potential $\varphi:\Sigma\rightarrow\R$,  given by  $\varphi_{|\left[g\right]}=\log\left(c_{\Phi}\left(g\right)\right)$
for all $g\in I$.
\begin{proof}
[Proof of Theorem \ref{thm:lineargdms-amenability-dichotomy} ]Our
aim is to apply Theorem \ref{thm:amenability-dichotomy-markov} to
the group-extended Markov system $\left(\Sigma\times\left(\F_{d}/N\right),\sigma\rtimes\Psi_{N}\right)$
and the $\mathcal{C}\left(1\right)$-measurable potential $s\varphi:\Sigma\rightarrow\R$,
for each $s\in\R.$ By (\ref{eq:proof-psi-onto}) and (\ref{eq:proof-mixingproperty}), 
we are left to show that $s\varphi$ is asymptotically  symmetric with respect
to $\Psi_{N}$. Since $\Phi$ is symmetric we
have that $c_{\Phi}\left(\omega\right)=c_{\Phi}\left(\kappa\left(\omega\right)\right)$,
for all $\omega\in\Sigma^{*}$. Hence, for all $s\in\R$, $n\in\N$
and $g\in \F_d / N$,  we have that
\begin{eqnarray*}
\sum_{\omega\in\Sigma^{n}:\,\Psi_{N}\left(\omega\right)=g}\exp\left(sS_{\omega}\varphi\right) & = & \sum_{\omega\in\Sigma^{n}:\,\Psi_{N}\left(\omega\right)=g}\left(c_{\Phi}\left(\omega\right)\right)^{s}=\sum_{\omega\in\Sigma^{n}:\,\Psi_{N}\left(\omega\right)=g}\left(c_{\Phi}\left(\kappa\left(\omega\right)\right)\right)^{s}\\
 & = & \sum_{\omega\in\Sigma^{n}:\,\Psi_{N}\left(\omega\right)=g^{-1}}\left(c_{\Phi}\left(\omega\right)\right)^{s}=\sum_{\omega\in\Sigma^{n}:\,\Psi_{N}\left(\omega\right)=g^{-1}}\exp\left(sS_{\omega}\varphi\right),
\end{eqnarray*}
which proves that $s\varphi$ is asymptotically  symmetric with respect to
$\Psi_{N}$. We are now in the position to apply Theorem \ref{thm:amenability-dichotomy-markov},
which gives that amenability of $\F_{d}/N$ is
equivalent to
\[
\mathcal{P}\left(s\varphi,\Psi_{N}^{-1}\left\{ \id\right\} \cap\Sigma^{*}\right)=\mathcal{P}\left(s\varphi\right).
\]
Since 
$\delta\left(N,\Phi\right)$ is equal to the unique zero of $s\mapsto\mathcal{P}\left(s\varphi,\Psi_{N}^{-1}\left\{ \id\right\} \cap\Sigma^{*}\right)$
and  $\delta\left(\F_{d},\Phi\right)$ is  equal to the unique zero of $s\mapsto\mathcal{P}\left(s\varphi\right)$ by Fact \ref{fac:criticalexponents-via-pressure}, we conclude that
\[
\delta\left(\F_{d},\Phi\right)=\delta\left(N,\Phi\right)\,\,\,\textrm{if and only if }\F_{d}/N\textrm{ is amenable}.
\]
The proof is complete.
\end{proof}
For the proof of Theorem \ref{thm:lineargdms-lowerhalfbound} we need
the following lemma.
\begin{lem}
\label{lem:delta-half-divergencetype}Let $\Phi$ be a  symmetric linear GDMS
 associated to $\F_{d}$. For every  non-trivial normal subgroup
$N$ of $\F_{d}$,  we have that 
\[
\sum_{h\in N}\left(c_{\Phi}\left(h\right)\right)^{\delta\left(\F_{d},\Phi\right)/2}=\infty.
\]
In particular, we have that  $\delta\left(N,\Phi\right)\ge\delta\left(\F_{d},\Phi\right)/2$. \end{lem}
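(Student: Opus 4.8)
The plan is to transport the Falk--Stratmann ``square trick'' into the symbolic picture, working with the reduced-word shift $\Sigma$, the involution $\kappa$, and the finite set $F\subset\Sigma^{*}\cap\Psi_{N}^{-1}\{\id\}$ from \eqref{eq:proof-mixingproperty}. Put $\delta:=\delta(\F_{d},\Phi)$. Since $\varphi|_{[g]}=\log c_{\Phi}(g)$ for $g\in I$, for an admissible word $\omega\in\Sigma^{*}$ one has $S_{\omega}\varphi=\log c_{\Phi}(\omega)$, and this is additive under concatenation of admissible words. Moreover each $h\in N\setminus\{\id\}$ corresponds to a unique nonempty reduced word, i.e.\ to an element of $\Psi_{N}^{-1}\{\id\}\cap\Sigma^{*}$, with $c_{\Phi}(h)=\e^{S_{\omega}\varphi}$, and this correspondence is a bijection. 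Hence $\sum_{h\in N}(c_{\Phi}(h))^{\delta/2}=1+\sum_{\eta\in\Psi_{N}^{-1}\{\id\}\cap\Sigma^{*}}\e^{(\delta/2)S_{\eta}\varphi}$, so it suffices to prove that this last sum diverges; the inequality $\delta(N,\Phi)\ge\delta/2$ is then immediate from the definition of $\delta(N,\Phi)$.

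The key construction is a symbolic squaring map. For $\omega\in\Sigma^{*}$ with last letter $v:=\omega_{|\omega|}$, the word $\kappa(\omega)$ begins with $v^{-1}$, and since $vv^{-1}$ is not admissible, \eqref{eq:proof-mixingproperty} supplies a nonempty $\tau_{v}\in F$ (fixed once and for all, depending only on $v$) with $v\tau_{v}v^{-1}\in\Sigma^{*}$; thus $\Theta(\omega):=\omega\,\tau_{v}\,\kappa(\omega)\in\Sigma^{*}$. Because $\F_{d}/N$ is a group and $\kappa$ reverses the word and inverts each letter, $\Psi_{N}(\kappa(\omega))=\Psi_{N}(\omega)^{-1}$, and since $\Psi_{N}(\tau_{v})=\id$ we get $\Theta(\omega)\in\Psi_{N}^{-1}\{\id\}\cap\Sigma^{*}$. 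Here the symmetry of $\Phi$ enters, giving $c_{\Phi}(\kappa(\omega))=c_{\Phi}(\omega)$, so that $S_{\Theta(\omega)}\varphi=2S_{\omega}\varphi+S_{\tau_{v}}\varphi\ge 2S_{\omega}\varphi+\log C$ with $C:=\min_{\tau\in F}\e^{S_{\tau}\varphi}>0$ (note $F\neq\emptyset$). Finally $\Theta$ is boundedly-to-one: if $\Theta(\omega)=\eta$ then $|\omega|$ must lie in $[(|\eta|-L)/2,\,|\eta|/2]$, where $L:=\max_{\tau\in F}|\tau|$, and then $\omega$ is forced to be the prefix of $\eta$ of length $|\omega|$; hence each $\eta$ has at most $\lfloor L/2\rfloor+1$ preimages under $\Theta$.

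Combining these facts, and since all summands are positive,
\[
\sum_{\eta\in\Psi_{N}^{-1}\{\id\}\cap\Sigma^{*}}\e^{(\delta/2)S_{\eta}\varphi}\ \ge\ \frac{1}{\lfloor L/2\rfloor+1}\sum_{\omega\in\Sigma^{*}}\e^{(\delta/2)S_{\Theta(\omega)}\varphi}\ \ge\ \frac{C^{\delta/2}}{\lfloor L/2\rfloor+1}\sum_{\omega\in\Sigma^{*}}\e^{\delta S_{\omega}\varphi},
\]
so the proof reduces to showing that $\F_{d}$ is of divergence type at $\delta$, i.e.\ $\sum_{\omega\in\Sigma^{*}}\e^{\delta S_{\omega}\varphi}=\infty$. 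For this I would use that $\delta$ is the zero of $s\mapsto\mathcal{P}(s\varphi)$ by Fact \ref{fac:criticalexponents-via-pressure}, so $\mathcal{P}(\delta\varphi)=0$; applying the Gibbs inequality \eqref{eq:gibbs-equation} for $\mu_{\delta\varphi}$ (which exists by Theorem \ref{thm:perron-frobenius-thm-urbanski}, as $\Sigma$ is a finite irreducible shift and $\delta\varphi$ is H\"older continuous) to the partition of $\Sigma$ into cylinders of length $n$ yields a constant $C'\ge1$ with $(C')^{-1}\le\sum_{\omega\in\Sigma^{n}}\e^{\delta S_{\omega}\varphi}\le C'$ for every $n\in\N$, whence $\sum_{\omega\in\Sigma^{*}}\e^{\delta S_{\omega}\varphi}=\sum_{n\ge1}\sum_{\omega\in\Sigma^{n}}\e^{\delta S_{\omega}\varphi}=\infty$. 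This completes the plan.

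The step requiring the most care is precisely \emph{why} one must pass to the symbolic side: the naive group-theoretic square map $g\mapsto gh_{0}g^{-1}$ on $\F_{d}$ is infinite-to-one, its fibres being cosets of the infinite cyclic centraliser of a fixed $h_{0}\in N\setminus\{\id\}$, so it does not directly give a usable lower bound; the reduced-word formulation replaces it by the boundedly-to-one map $\Theta$, where the multiplicity bound is a short combinatorial observation. Once that is in place, together with the divergence-type input for $\F_{d}$ and the symmetry of $\Phi$ (used only to match $c_{\Phi}(\kappa(\omega))$ with $c_{\Phi}(\omega)$), the remaining manipulations are routine bookkeeping.
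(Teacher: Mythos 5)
Your proposal is correct and follows essentially the same route as the paper's proof: the same boundedly-to-one squaring map $\Theta(\omega)=\omega\tau\kappa(\omega)$ built from the finite set $F$, the same use of symmetry to convert $S_{\Theta(\omega)}\varphi$ into $2S_{\omega}\varphi$ up to a bounded error, and the same divergence-type input for $\F_{d}$ obtained from the Gibbs measure $\mu_{\delta\varphi}$ at the zero of the pressure function. The only differences are cosmetic (your multiplicity bound $\lfloor L/2\rfloor+1$ versus the paper's $\card(F)$, and your fixing $\tau$ by the last letter of $\omega$).
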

\begin{proof}
First observe that $N$ and $\Psi_{N}^{-1}\left\{ \id\right\} \cap\Sigma^{*}$
are in one-to-one correspondence, which implies that  
\[
\sum_{h\in N}\left(c_{\Phi}\left(h\right)\right)^{\delta\left(\F_{d},\Phi\right)/2}=\sum_{\omega\in\Psi_{N}^{-1}\left\{ \id\right\} \cap\Sigma^{*}}\exp\left(\left(\delta\left(\F_{d},\Phi\right)/2\right)S_{\omega}\varphi\right).
\]
For each $\omega\in\Sigma^{*}$,  we can choose $\tau\left(\omega\right)\in F$
such that $\omega\tau\left(\omega\right)\kappa\left(\omega\right)\in\Sigma^{*}$
by making use of property (\ref{eq:proof-mixingproperty}). Further, we  define the
map $\Theta:\Sigma^{*}\rightarrow\Psi_{N}^{-1}\left\{ \id\right\} \cap\Sigma^{*}$, 
 $\Theta(\omega):=\omega\tau\left(\omega\right)\kappa\left(\omega\right)$,
which is at most $\card\left(F\right)$-to-one. Moreover, setting  $C:=\min\left\{ S_{\tau}\varphi/2:\tau\in F\right\} >-\infty$
and using that $\Phi$ is symmetric,   we observe that $S_{\omega}\varphi+C=S_{\omega}\varphi/2+S_{\kappa\left(\omega\right)}\varphi/2+C\le S_{\Theta\left(\omega\right)}\varphi/2$,  for each $\omega\in\Sigma^{*}$.
Consequently, we have that
\begin{eqnarray}
\sum_{\omega\in\Psi_{N}^{-1}\left\{ \id\right\} \cap\Sigma^{*}}\exp\left(\left(\delta\left(\F_{d},\Phi\right)/2\right)S_{\omega}\varphi\right) & \ge &\card\left(F\right)^{-1}\sum_{\omega\in\Sigma^{*}}\exp\left(\left(\delta\left(\F_{d},\Phi\right)/2\right)S_{\Theta\left(\omega\right)}\varphi\right)\label{eq:delta-half-bound-1}\\
 & \ge & \card\left(F\right)^{-1} \exp\left({\delta\left(\F_{d},\Phi\right)C}\right)\sum_{\omega\in\Sigma^{*}}\exp\left(\delta\left(\F_{d},\Phi\right)S_{\omega}\varphi\right).\nonumber
\end{eqnarray}
Finally, the existence of the Gibbs measure $\mu=\mu_{\delta\left(\F_{d},\Phi\right)\varphi}$ implies that there exists  a constant $C_{\mu}>0$  such that
\[
\sum_{\omega\in\Sigma^{*}}\exp\left(\delta\left(\F_{d},\Phi\right)S_{\omega}\varphi\right)\ge C_{\mu}\sum_{\omega\in\Sigma^{*}}\mu\left(\left[\omega\right]\right)=C_{\mu}\sum_{n\in\N}\sum_{\omega\in\Sigma^{n}}\mu\left(\left[\omega\right]\right)=C_{\mu} \sum_{n\in\N}1=\infty.
\]
Combining the latter  estimate  with (\ref{eq:delta-half-bound-1}),  the proof is complete.
\end{proof}
\begin{proof}
[Proof of Theorem \ref{thm:lineargdms-lowerhalfbound}] By Theorem
\ref{thm:lineargdms-amenability-dichotomy},  the assertion is clearly
true if $\F_{d}/N$ is amenable. We address the remaining  case that $\F_{d}/N$
is non-amenable. Suppose  for a contradiction that the claim  is wrong.
By Lemma \ref{lem:delta-half-divergencetype},  we obtain that 
\begin{equation}
\delta\left(N,\Phi\right)=\delta\left(\F_{d}\right)/2.\label{eq:deltaN-is-deltahalf}
\end{equation}
For notational convenience,  we set $G:=\F_{d}/N$ throughout this proof.

Consider the non-negative matrix $P\in\R^{\left(I\times G\right)\times\left(I\times G\right)}$, 
given by
\[
p\left(\left(v_{1},g_{1}\right),\left(v_{2},g_{2}\right)\right)=\begin{cases}
c_{\Phi}\left(v_{1}\right)^{\delta\left(N,\Phi\right)}, & \mbox{if }v_{1}\neq v_{2}^{-1}\mbox{ and }g_{2}=g_{1}\Psi_{N}\left(v_{1}\right)\\
0 & \mbox{else.}
\end{cases}.
\]
By the assertions in  (\ref{eq:proof-psi-onto}) and (\ref{eq:proof-mixingproperty}), 
we have that $P$ is irreducible in the sense that,  for all $x,y\in I\times G$
there exists  $n\in\N$ such that $p^{\left(n\right)}\left(x,y\right)>0$.
Using the irreducibility of $P$ and that $\card\left(I\right)=2d<\infty$, 
we deduce from (\ref{eq:deltaN-is-deltahalf}) and Lemma \ref{lem:delta-half-divergencetype}
that $P$ is $R$-recurrent with $R=1$ in the sense of Vere-Jones
(\cite{MR0141160}, see also Seneta \cite[Definition 6.4]{MR2209438}). That is,  $P$ satisfies the following properties. 
\begin{equation}
\limsup_{n\rightarrow\infty}\left(p^{\left(n\right)}\left(x,y\right)\right)^{1/n}=1\mbox{ and }\sum_{n\in\N}p^{\left(n\right)}\left(x,y\right)=\infty,\mbox{ for all }x,y\in I\times G.\label{eq:recurrent-matrix}
\end{equation}
Thus, by \cite[Theorem 6.2]{MR2209438}, it follows that there exists
a positive row vector $h\in\R^{I\times G}$ such that
\begin{equation}
hP=h.\label{eq:strict-half-bound-1a}
\end{equation}
It also follows from \cite[Theorem 6.2]{MR2209438} that the vector  $h$ in (\ref{eq:strict-half-bound-1a}) is unique up to a constant multiple.
Next, we define the non-negative matrix $P_{h}\in\R^{\left(I\times G\right)\times\left(I\times G\right)}$, which is for all $x,y \in I\times G$ given by 
\[
p_{h}\left(x,y\right)=p\left(y,x\right)h\left(y\right)/h\left(x\right).
\]
It follows from (\ref{eq:strict-half-bound-1a}) that $P_{h}$ is
a transition matrix on $I\times G$. Further,  we deduce from   (\ref{eq:recurrent-matrix}) that $P_{h}$ is $1$-recurrent.

In order to derive a contradiction,  we consider $P_{h}$ as a random
walk on the graph $X_{1}$ associated to the group-extended Markov
system $\left(\Sigma\times G,\sigma\rtimes\Psi_{N}\right)$ (see Definition
\ref{def:k-cylinder-graphs}), and we investigate the automorphisms
of $X_{1}$. Let $\Aut\left(X_{1}\right)$ denote the group of self-isometries
of $\left(X_{1},d_{X_{1}}\right)$,  where $d_{X_{1}}$ denotes the
graph metric on $X_{1}$. Note that each element $g\in G$ gives rise
to an automorphism $\gamma_{g}\in\Aut\left(X_{1}\right)$,  which is
 given by $\gamma_{g}\left(i,\tau\right):=\left(i,g\tau\right)$, for each $\left(i,\tau\right)\in I\times G$.
The next step is to verify that also $\gamma_{g}\in\Aut\left(X_{1},P_{h}\right)$, 
where we have set
\[
\Aut\left(X_{1},P_{h}\right):=\left\{ \gamma\in\Aut\left(X_{1}\right):P_{h}\left(x,y\right)=P_{h}\left(\gamma x,\gamma y\right),\mbox{ for all }x,y\in I\times G\right\} .
\]
Since $P$ has the property that $p\left(x,y\right)=p\left(\gamma_{g}\left(x\right),\gamma_{g}\left(y\right)\right)$, 
for all $x,y\in I\times G$ and $g\in G$, it follows that the vector  $h_{g}\in\R^{I\times G}$, 
given by $h_{g}\left(i,\tau\right):=h\left(i,g\tau \right)$, $\left(i,\tau\right)\in I\times G$,
satisfies $h_{g}P=h_{g}$ as well. Since the function $h$ in (\ref{eq:strict-half-bound-1a})
is unique up to a constant multiple, we conclude that there exists
a homomorphism $r:G\rightarrow\R^{+}$ such that$h_{g}=r\left(g\right)h$, 
  for each $g\in G$. Consequently, we have $p_{h}\left(x,y\right)=p_{h}\left(\gamma_{g}\left(x\right),\gamma_{g}\left(y\right)\right)$
for all $x,y\in I\times G$ and $g\in G$.
Hence,  $\gamma_{g}\in\Aut\left(X_{1},P_{h}\right)$
for each $g\in G$. Since $\card(I)<\infty$, we deduce  that $\Aut\left(X_{1},P_{h}\right)$) acts with finitely many orbits on $X_{1}$.

In the terminology of \cite{MR1743100} this is to say that $\left(X_{1},P_{h}\right)$
is a quasi-transitive recurrent random walk. By \cite[Theorem 5.13]{MR1743100}
we then have that $X_{1}$ is a generalized lattice of dimension one
or two. In particular, we have
that $X_{1}$ has polynomial growth with degree one or two (\cite[Proposition 3.9]{MR1743100}). Since
$X_{1}$ is roughly isometric to the Cayley graph of $G$ by Lemma
\ref{lem:roughisometric-withcayleygraph}, we conclude that also  $G$ has
polynomial growth (see e.g. \cite[Lemma 3.13]{MR1743100}).
This contradicts the well-known fact that each non-amenable group
has exponential growth. The proof is complete. \end{proof}
\begin{rem*}
The construction of the matrix $P_{h}$ and the verification of its
invariance properties is analogous to the discussion of the $h$-process
in \cite[Proof of Theorem 7.8]{MR1743100} and goes back to the work
of Guivarc'h (\cite[page 85]{MR588157}) on random walks on groups.
However, note that in our case $P$ is in general not stochastic.
\end{rem*}
\begin{proof}
[Proof of Proposition \ref{pro:lineargdms-brooks}]In order to investigate the radial limit sets of $N$, we introduce an induced  GDMS $\tilde{\Phi}$, whose edge set consists of first return loops in the Cayley graph of $\F_d / N$.   We define $\tilde{\Phi}:=\left(V,\left(X_{v}\right)_{v\in V},\tilde{E},\tilde{i},\tilde{t},\left(\tilde{\phi}_{\omega}\right)_{\omega\in\tilde{E}},\tilde{A}\right)$
as follows. The edge set $\tilde{E}$ and   $\tilde{i},\tilde{t}:\tilde{E}\rightarrow V$
are given by
\[
\tilde{E}:=\left\{ \omega=\left(v_{i},w_{i}\right)\in\Sigma_{\Phi}^{*}:\,\, v_{1}\cdot\dots\cdot v_{\left|\omega\right|}\in N,\,\, v_{1}\cdot\dots\cdot v_{k}\notin N\mbox{ for all }1\le k<\left|\omega\right|\right\} ,
\]
\[
\tilde{i}\left(\omega\right):=i\left(\omega_{1}\right),\,\,\tilde{t}\left(\omega\right):=t\left(\omega_{\left|\omega\right|}\right),\,\,\omega\in\tilde{E},
\]
the matrix $\tilde{A}=\left(\tilde{a}\left(\omega,\omega'\right)\right)\in\left\{ 0,1\right\} ^{\tilde{E}\times\tilde{E}}$
satisfies $\tilde{a}\left(\omega,\omega'\right)=1$ if and only if
$a\left(\omega_{\left|\omega\right|},\omega'_{1}\right)=1$, and the
family $\left(\tilde{\phi}_{\omega}\right)_{\omega\in\tilde{E}}$
is given by $\tilde{\phi}_{\omega}:=\phi_{\omega},\,\,\omega\in\tilde{E}$.
One immediately verifies that $\tilde{\Phi}$ is a conformal GDMS. Note that there are canonical embeddings from $\Sigma_{\tilde{\Phi}}$  into $\Sigma_{\Phi}$ and from $\Sigma_{\tilde{\Phi}}^{*}$ into $\Sigma_{\Phi}^{*}$, which we will both  indicate by omitting the tilde, that is $\tilde{\omega}\mapsto\omega$. For the coding maps $\pi_{\tilde{\Phi}}:\Sigma_{\tilde{\Phi}}\rightarrow J\left(\tilde{\Phi}\right)$
and $\pi_{\Phi}:\Sigma_{\Phi}\rightarrow J\left(\Phi\right)$ we have
$\pi_{\tilde{\Phi}}\left(\tilde{\omega}\right)=\pi_{\Phi}\left(\omega\right)$, for each $\tilde{\omega}\in \Sigma_{\tilde{\Phi}}$.
The following relations between the limit set of $\tilde{\Phi}$ and
the radial limit sets of $N$ are straightforward to prove. We have
that
\[
J^{*}\left(\tilde{\Phi}\right)\subset \Lur(N,\Phi)\subset \Lr(N,\Phi) \subset J\left(\tilde{\Phi}\right)\cup\bigcup_{\eta\in\Sigma_{\Phi}^{*},\tilde{\omega}\in\Sigma_{\tilde{\Phi}}:\eta\omega\in\Sigma_{\Phi}}\phi_{\eta}\left(\pi_{\tilde{\Phi}}\left(\tilde{\omega}\right)\right).
\]
Note that the right-hand side in the latter chain of inclusions can be written as a  countable
union of images of $J\left(\tilde{\Phi}\right)$ under Lipschitz continuous
maps. Since Lipschitz continuous maps do not increase Hausdorff dimension
and since Hausdorff dimension is stable under countable unions, we
obtain
\begin{equation}
\dim_{H}\left(J^{*}\left(\tilde{\Phi}\right)\right)\le\dim_{H}\left(\Lur(N,\Phi)\right)\le\dim_{H}\left(\Lr(N,\Phi)\right)\le\dim_{H}\left(J\left(\tilde{\Phi}\right)\right).\label{eq:hausdorffdimension-inequalities}
\end{equation}
Since the incidence matrix of $\tilde{\Phi}$ is finitely irreducible
by property (\ref{eq:proof-mixingproperty}), the generalised Bowen's
formula (Theorem \ref{thm:cgdms-bowen-formula}) implies that $\dim_{H}\left(J^{*}\left(\tilde{\Phi}\right)\right)=\dim_{H}\left(J\left(\tilde{\Phi}\right)\right)$,
so equality holds in (\ref{eq:hausdorffdimension-inequalities}). 

The final step is to show that $\dim_{H}\left(J\left(\tilde{\Phi}\right)\right)=\delta\left(N,\Phi\right)$.
By Theorem \ref{thm:cgdms-bowen-formula} and Fact \ref{fac:criticalexponents-via-pressure}, 
we have
\[
\dim_{H}\left(J\left(\tilde{\Phi}\right)\right)=\mathcal{P}_{-\tilde{\zeta_{\Phi}}}\left(0,\Sigma_{\tilde{\Phi}}^{*}\right)=\inf\left\{ s\in\R:\sum_{\tilde{\omega}\in\Sigma_{\tilde{\Phi}}^{*}}\e^{sS_{\tilde{\omega}}\zeta_{\tilde{\Phi}}}<\infty\right\} .
\]
Since the elements $\tilde{\omega}\in\Sigma_{\tilde{\Phi}}^{*}$ are
in one-to-one correspondence with $\omega\in\mathcal{C}_{N}$, where $\mathcal{C}_{N}$ is  given by
\[
\mathcal{C}_{N}:=\left\{ \omega=\left(v_{i},w_{i}\right)\in\Sigma_{\Phi}^{*}:\,\, v_{1}\cdot\dots\cdot v_{\left|\omega\right|}\in N\right\} ,
\]
and using that $S_{\tilde{\omega}}\zeta_{\tilde{\Phi}}=S_{\omega}\zeta_{\Phi}$
for all $\tilde{\omega}\in\Sigma_{\tilde{\Phi}}^{*}$, we conclude
that
\[
\dim_{H}\left(J\left(\tilde{\Phi}\right)\right)=\inf\left\{ s\in\R:\sum_{\omega\in\mathcal{C}_{N}}\e^{sS_{\omega}\zeta_{\Phi}}<\infty\right\} .
\]
Finally, since the map from $\mathcal{C}_{N}$ onto $N$,  given by
$\omega=\left(\left(v_{1},w_{1}\right),\left(v_{2},w_{2}\right),\dots,\left(v_{n},w_{n}\right)\right)\mapsto v_{1}v_{2}\cdots v_{n}$, for $n\in\N$,
is $\left(2d-1\right)$-to-one, and since  $S_{\omega}\zeta_{\Phi}=c_{\Phi}\left(v_{1}\dots v_{n}\right)$, for all $\omega\in\mathcal{C}_{N}$, 
it follows that
\[
\dim_{H}\left(J\left(\tilde{\Phi}\right)\right)=\inf\left\{ s\in\R:\sum_{g\in N}\left(c_{\Phi}\left(g\right)\right)^{s}<\infty\right\} =\delta\left(N,\Phi\right),
\]
which completes the proof. 
\end{proof}

\section{Kleinian Groups\label{sec:Kleinian-groups}}
In this section we give a more detailed discussion of Kleinian groups and how these relate to the concept of
a GDMS. In particular, in Proposition \ref{pro:canonicalgdms-gives-radiallimitset} we will  give the motivation for our definition of the  radial limit set in the context of a GDMS associated to the free group (see Definition \ref{def:gdms-associated-to-freegroup-and-radiallimitsets}). 

In the following we let $G\subset\mathrm{Con}\left(m\right)$ denote
a non-elementary, torsion-free Kleinian group acting properly discontinuously
on the $\left(m+1\right)$-dimensional hyperbolic space $\mathbb{D}^{m+1}$, where $\mathrm{Con}\left(m\right)$
denotes the set of orientation preserving conformal automorphisms
of $\mathbb{D}^{m+1}$.  The \emph{limit set} $L\left(G\right)$ of $G$
is the set of accumulation points with respect to the Euclidean  topology on  $\R^{m+1}$ of the $G$-orbit of some arbitrary point in $\mathbb{D}^{m+1}$,
that is, for each $z\in\mathbb{D}^{m+1}$ we have that
\[
L\left(G\right)=\overline{G\left(z\right)}\setminus G\left(z\right),
\]
where the closure is taken with respect to the Euclidean topology on $\R^{m+1}$. Clearly, $L\left(G\right)$ is a subset of $\mathbb{S}$.
For more details on Kleinian groups and their limit sets, we refer
to \cite{Beardon,MR959135,MR1041575,MR1638795,MR2191250}.

Let us recall the definition of the following important subsets of $L(G)$, namely the  \emph{radial} and the \emph{uniformly
radial limit set} of $G$. In here, $s_{\xi}\subset\mathbb{D}^{m+1}$
denotes the hyperbolic ray from $0$ to $\xi$ and $B\left(x,r\right):=\left\{ z\in\mathbb{D}^{m+1}:\,\, d\left(z,x\right)<r\right\} \subset\mathbb{D}^{m+1}$
denotes the open hyperbolic ball of radius $r$ centred at $x$, where $d$ denotes the hyperbolic metric on $\mathbb{D}^{m+1}$.
\begin{defn}
\label{def:radiallimitsets-fuchsian}For a Kleinian
group $G$ the \emph{radial} and the \emph{uniformly radial limit
set} of $G$ are given by
\begin{align*}
L_{\mathrm{r}}\left(G\right) & :=\left\{ \xi\in L\left(G\right):\exists c>0\mbox{ such that }s_{\xi}\cap B\left(g\left(0\right),c\right)\neq\emptyset\mbox{ for infinitely many }g\in G\right\} ,\\
\text{and}
\\
L_{\mathrm{ur}}\left(G\right) & :=\left\{ \xi\in L\left(G\right):\exists c>0\mbox{ such that }s_{\xi}\subset\bigcup_{g\in G}B\left(g\left(0\right),c\right)\right\} .
\end{align*}

\end{defn}
A  Kleinian group $G$ is said to be \emph{geometrically finite}
if the action of $G$ on $\mathbb{D}^{m+1}$ admits a fundamental polyhedron
with finitely many sides. We denote by $E_{G}$ the set of points
in $\mathbb{D}^{m+1}$, which lie on a geodesic connecting any two limit
points in $L\left(G\right)$. The \emph{convex hull} of $E_{G}$, which we will denote
by $C_{G}$, is the minimal hyperbolic convex subset of $\mathbb{D}^{m+1}$
containing $E_{G}$. $G$ is called  \emph{convex cocompact} (\cite[page 7]{MR1041575})
if the action of $G$ on $C_{G}$ has a compact fundamental domain
in $\mathbb{D}^{m+1}.$

The following class of Kleinian groups gives the main motivation for
our definition of a GDMS associated to the free group (see also \cite[X.H]{MR959135}).
\begin{defn}
\label{def:kleinian-of-schottkytype}
Let $d\ge2$ and  let $\mathcal{D}:=\{ (D_{n}^j):n\in \{1,\dots,d\}, j\in \{-1,1\}\}$ be a family  of
 pairwise disjoint compact Euclidean balls $D_{n}^j\subset\R^{m+1}$, which  intersect $\mathbb{S}^m$ orthogonally such that  $\diam\left(D_{n}\right)=\diam\left(D_{n}^{-1}\right)$. For each $n\in \{1,\dots d \}$,  let  $g_{n}\in\mathrm{Con}\left(m\right)$
be the unique hyperbolic element such that $g_{n}\left(\mathbb{D}^{m+1}\cap\partial D_{n}^{-1}\right)=\mathbb{D}^{m+1}\cap\partial D_{n}$,
where $\partial D_{n}^j$ denotes the boundary of $D_{n}^j$ with respect
to the Euclidean metric on $\R^{m+1}$. Then $G:=\left\langle g_{1},\dots,g_{d}\right\rangle $
is referred to as  the \emph{Kleinian group of Schottky type} generated
by $\mathcal{D}$.
\end{defn}

Note that  a Kleinian group of Schottky type $G=\left\langle g_{1},\dots,g_{d}\right\rangle $ is algebraically a free group. The following construction of a  particular GDMS associated to the free group $\langle g_{1},\dots,g_{d}\rangle$ is canonical.
\begin{defn}
\label{def:canonical-model-kleinianschottky}Let $G=\langle g_{1},\dots,g_{d}\rangle$ be
a Kleinian group of Schottky type
generated by $\mathcal{D}$. The \emph{canonical GDMS $\Phi_{G}$ associated
 to $G$}  is the GDMS associated to the free group $\langle g_{1},\dots,g_{d}\rangle$
which satisfies   $X_{g_{n}^j}:=\left(\mathbb{D}^{m+1}\cup\mathbb{S}^m\right)\cap D_{n}^j$,
for each  $n\in \{1,\dots,d\}$ and  $j\in \{-1, 1\}$,    and for which the contractions   $\phi_{\left(v,w\right)}:X_{w}\rightarrow X_{v}$ are  given by $\phi_{\left(v,w\right)}:=v_{\big|X_{w}}$,  for each $(v,w)\in E$. \end{defn}
For the following fact we refer to \cite[Theorem 5.1.6]{MR2003772}.
\begin{fact} \label{fac:coding-kleinian-of-schottkytype}
For a Kleinian group of Schottky type $G$ we have that $L\left(G\right)=J\left(\Phi_{G}\right)$.
\end{fact}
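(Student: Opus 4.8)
This is the classical fact that the limit set of a Schottky group is coded by its natural symbolic dynamics, and the plan is to establish the two inclusions $J(\Phi_G)\subseteq L(G)$ and $L(G)\subseteq J(\Phi_G)$. For a letter $v=g_n^{j}\in V$ write $D_v:=D_n^{j}$, so that $X_v=\overline{\mathbb{D}^{m+1}}\cap D_v$; the defining Schottky pairing says that $v\in\mathrm{Con}(m)$ maps $\overline{\mathbb{D}^{m+1}}\setminus\interior(D_{v^{-1}})$ onto $\overline{\mathbb{D}^{m+1}}\cap D_v$, and since the balls of $\mathcal{D}$ are pairwise disjoint this is precisely why $\phi_{(v,w)}=v|_{X_w}$ maps $X_w$ into $X_v$ whenever $w\neq v^{-1}$. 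For $\omega=((v_i,w_i))_i\in\Sigma_{\Phi_G}$ one has $w_i=v_{i+1}$ and $\phi_{\omega|_{n}}(X_{t(\omega_n)})=h_n(X_{v_{n+1}})$, where $h_n:=v_1\cdots v_n\in G$; the uniform Lipschitz bound $s<1$ of Definition \ref{def:gdms} gives $\diam(h_n(X_{v_{n+1}}))\le s^{n}\max_{v\in V}\diam(X_v)\to0$, so $\pi_{\Phi_G}(\omega)$ is the unique point of $\bigcap_n h_n(X_{v_{n+1}})$, and moreover $\pi_{\Phi_G}(\tau)=i(\tau_1)(\pi_{\Phi_G}(\sigma\tau))$ for every $\tau\in\Sigma_{\Phi_G}$. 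Fix a base point $z_0\in\mathbb{D}^{m+1}$ lying outside every ball $D_v$.

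For $J(\Phi_G)\subseteq L(G)$, I would argue: since $z_0$ lies outside all balls, $v_n(z_0)\in X_{v_n}$, and iterating the ping-pong inclusion gives $h_n(z_0)=h_{n-1}(v_n(z_0))\in h_{n-1}(X_{v_n})$, the same compact set (of diameter $\to0$) that contains $\pi_{\Phi_G}(\omega)$; hence $h_n(z_0)\to\pi_{\Phi_G}(\omega)$. Since $G$ is free on $g_1,\dots,g_d$, the reduced words $h_n$ are pairwise distinct, so $\{h_n(z_0)\}$ is an infinite subset of the orbit $G(z_0)$; proper discontinuity of the action on $\mathbb{D}^{m+1}$ forces $d(z_0,h_n(z_0))\to\infty$, so this limit lies on $\mathbb{S}$, whence $\pi_{\Phi_G}(\omega)\in\overline{G(z_0)}\setminus G(z_0)=L(G)$. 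In particular $J(\Phi_G)\subseteq\mathbb{S}$.

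For $L(G)\subseteq J(\Phi_G)$, the plan is to show $J(\Phi_G)$ is a nonempty closed $G$-invariant subset of $\mathbb{S}$ and then invoke the minimality of $L(G)$. It is nonempty, and being the continuous image of the compact shift $\Sigma_{\Phi_G}$ over the finite alphabet $E$ it is compact, hence closed. For $G$-invariance it suffices to verify $v(J(\Phi_G))\subseteq J(\Phi_G)$ for each $v\in V$, since $V=V^{-1}$ generates $G$; using the identity $\pi_{\Phi_G}(\tau)=i(\tau_1)(\pi_{\Phi_G}(\sigma\tau))$ this reduces to a dichotomy on the initial letter of $\omega$: if $i(\omega_1)\neq v^{-1}$ then $(v,i(\omega_1))$ is an edge, prepending it produces $v\omega\in\Sigma_{\Phi_G}$ and $v(\pi_{\Phi_G}(\omega))=\pi_{\Phi_G}(v\omega)$; if $i(\omega_1)=v^{-1}$ then $v(\pi_{\Phi_G}(\omega))=\pi_{\Phi_G}(\sigma\omega)$; either way $v(\pi_{\Phi_G}(\omega))\in J(\Phi_G)$. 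Finally, since $G$ is non-elementary, $L(G)$ is contained in every nonempty closed $G$-invariant subset of $\mathbb{S}$, so $L(G)\subseteq J(\Phi_G)$; together with the first inclusion this yields $L(G)=J(\Phi_G)$.

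The step I expect to require the most care is the second inclusion: one must verify the prepend/cancel dichotomy and the identity $\pi_{\Phi_G}(\tau)=i(\tau_1)(\pi_{\Phi_G}(\sigma\tau))$ with the correct index bookkeeping — this is where the Markov condition $t(e)=i(f)$ and the forbidden pairs $(v,v^{-1})$ must be used precisely — and one must recall the standard fact that $L(G)$ is the minimal nonempty closed $G$-invariant subset of $\mathbb{S}$ for a non-elementary $G$. Should that minimality fact be inconvenient to invoke, a more computational alternative is to use that attracting fixed points of loxodromic elements are dense in $L(G)$, that every nontrivial element of a Schottky group is loxodromic, and that each such fixed point is the $\pi_{\Phi_G}$-image of an eventually periodic sequence read off the normal form of the element in the free group $G$; but the $G$-invariance route is the shorter one.
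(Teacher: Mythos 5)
Your proof is correct. Note, however, that the paper does not prove this fact at all: it simply cites \cite[Theorem 5.1.6]{MR2003772}, so there is no in-paper argument to compare against. Your self-contained proof is the standard one and both halves check out. For $J(\Phi_G)\subseteq L(G)$ the index bookkeeping is right: with $h_n=v_1\cdots v_n$ one has $\phi_{\omega|_n}(X_{t(\omega_n)})=h_n(X_{v_{n+1}})$, the ping-pong inclusion puts $h_n(z_0)$ and $\pi_{\Phi_G}(\omega)$ in the same set $h_{n-1}(X_{v_n})$ of Euclidean diameter at most $s^{n-1}\max_v \diam(X_v)$, the words $h_n$ are reduced (the edge condition $w\neq v^{-1}$ forbids cancellation) hence pairwise distinct in the free group, and proper discontinuity pushes the limit to $\mathbb{S}$. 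For the reverse inclusion, the prepend/cancel dichotomy correctly establishes $v(J(\Phi_G))=J(\Phi_G)$ for every $v\in V$, and since a Schottky group on $d\ge 2$ generators is non-elementary, minimality of $L(G)$ among nonempty closed $G$-invariant subsets of $\mathbb{S}$ applies. The only points resting on facts external to the paper are this minimality statement and the convention that the Schottky pairing sends the exterior of $D_{v^{-1}}$ into $D_v$ (which Definition \ref{def:kleinian-of-schottkytype} leaves implicit but which is needed anyway for $\Phi_G$ to be a GDMS); both are standard and appropriate to invoke. Your proof could be inserted in place of the citation without change.
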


\begin{rem} We remark that without our assumption on $G$ that $\diam\left(D_{n}\right)=\diam\left(D_{n}^{-1}\right)$,  for each $n\in \{1,\dots ,d\}$  in Definition \ref{def:kleinian-of-schottkytype}, the generators of the associated GDMS $\Phi_G$ may fail to be contractions. However, in that case, by taking   sufficiently high iterates of the generators, we can pass to a finite index subgroup of $G$, for which there exists  a set  $\mathcal{D}$ as in Definition \ref{def:kleinian-of-schottkytype}. 
\end{rem}

The following brief discussion of the geometry of a Kleinian group
of Schottky type $G$ contains nothing that is not well known, however, the reader might like
to recall a few of its details. Let $\Phi_{G}$ denote the canonical GDMS associated to $G$. Recall that for  the half-spaces $$H_{v}:=\left\{ z\in\mathbb{D}^{m+1}:\,\, d\left(z,0\right)<d\left(z,v\left(0\right)\right)\right\}, \text{ for each }v\in V, $$
we have that the set
\[
F:=\bigcap_{v\in V}H_{v}
\]
is referred to as a \emph{Dirichlet fundamental domain for $G$}. That $F$ is a fundamental domain for $G$ means that $F$ is an open set which satisfies the conditions
\[
\bigcup_{g\in G}g\left(\overline{F}\cap\mathbb{D}^{m+1}\right)=\mathbb{D}^{m+1}\mbox{ and }g\left(F\right)\cap h\left(F\right)=\emptyset\mbox{ for all }g,h\in G\mbox{ with }g\neq h.
\]
For $\omega=\left(v_{k},w_{k}\right)_{k\in\N}\in\Sigma_{\Phi_G}$ and
$\pi_{\Phi_G}\left(\omega\right)=\xi$, we have that the ray $s_{\xi}$
 successively passes through the fundamental domains
$F,v_{1}\left(F\right),v_{1}v_{2}\left(F\right),\dots$.

We also make use of the fact that a Kleinian group of Schottky type $G$ is convex cocompact. This follows from a theorem due to Beardon and Maskit (\cite{MR0333164}, \cite[Theorem 2]{MR2191250}),
since $G$ is geometrically finite and $L\left(G\right)$ contains  no parabolic
fixed points (cf. \cite[Theorem 12.27]{MR2249478}).  Clearly, if  $G$
is convex cocompact, then there exists $R_{G}>0$ such
that
\begin{equation}
C_{G}\cap g\overline{F}\subset B\left(g\left(0\right),R_{G}\right),\mbox{ for all }g\in G.\label{eq:schottky-ex-2}
\end{equation}
In particular, we have that   $L_{\mathrm{ur}}\left(G\right)=L_{\mathrm{r}}\left(G\right)=L\left(G\right)$.

Using the fact that $G$ acts properly discontinuously on $\mathbb{D}^{m+1}$ and that $G$ is convex cocompact, one
easily verifies that for each  $r>0$ there exists
a finite set $\Gamma\subset G$ such that
\begin{equation}
B\left(0,r\right)\cap C_{G}\subset\bigcup_{\gamma\in \Gamma}\gamma \overline{F}.\label{eq:Lur-coding-2}
\end{equation}

The next proposition provides the main motivation for our definition
of the (uniformly) radial limit set of a normal subgroup $N$ of $\F_{d}$
with respect to a GDMS associated to $\F_{d}$.
\begin{prop}
\label{pro:canonicalgdms-gives-radiallimitset}Let $G$ be a Kleinian
group of Schottky type and let $\Phi_{G}$ denote the canonical GDMS
associated to $G$. For every non-trivial normal subgroup $N$ of
$G$,  we  have that
\[
L_{\mathrm{r}}\left(N\right)=\Lr\left(N,\Phi_{G}\right)\mbox{ and }L_{\mathrm{ur}}\left(N\right)=\Lur\left(N,\Phi_{G}\right).
\]
\end{prop}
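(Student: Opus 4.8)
The plan is to match the combinatorial description of $\Lr(N,\Phi_G)$ and $\Lur(N,\Phi_G)$ in Definition \ref{def:gdms-associated-to-freegroup-and-radiallimitsets} with the geometric description of $L_{\mathrm r}(N)$ and $L_{\mathrm{ur}}(N)$ in Definition \ref{def:radiallimitsets-fuchsian}, via the tiling picture recalled above: for $\omega=(v_k,w_k)_{k\in\N}\in\Sigma_{\Phi_G}$ with $\xi:=\pi_{\Phi_G}(\omega)$, the hyperbolic ray $s_\xi$ passes successively through the tiles $g_0(F),g_1(F),g_2(F),\dots$, where $g_0:=\id$ and $g_n:=v_1\cdots v_n$. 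First I would record some reductions. Since the balls of $\mathcal D$ are pairwise disjoint, $\pi_{\Phi_G}$ is injective, so by Fact \ref{fac:coding-kleinian-of-schottkytype} every $\xi\in L(G)=J(\Phi_G)$ has a unique code $\omega$ and hence a well-defined sequence $(g_n)_{n\ge0}$; since the edge relation of $\Sigma_{\Phi_G}$ forbids $v_i=v_{i+1}^{-1}$, each word $v_1\cdots v_n$ is reduced of length $n$ in $\F_d$, so the $g_n$ are pairwise distinct. Because $N$ is a non-trivial normal subgroup of the non-elementary group $G$, the set $L(N)$ is a non-empty (as $N\neq\{\id\}$ and every non-trivial element of $G$ is loxodromic) closed $G$-invariant subset of $L(G)$, whence $L(N)=L(G)=J(\Phi_G)$; consequently all four sets occurring in the proposition are contained in $J(\Phi_G)$, and it suffices to test membership of a fixed $\xi\in J(\Phi_G)$.

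The geometric engine is a uniform comparison of $g_n(0)$ with the part of $s_\xi$ lying in $g_n(\overline F)$. Since $\xi\in L(G)$, the ray $s_\xi$ is asymptotic to a geodesic ray contained in the convex core $C_G$, so it lies in a uniformly bounded hyperbolic neighbourhood of $C_G$; as $G$ is convex cocompact, the argument behind (\ref{eq:schottky-ex-2}) extends to give a constant $R>0$ such that any fixed bounded neighbourhood of $C_G$ meets each tile $g(\overline F)$, $g\in G$, only inside $B(g(0),R)$. Applying this to $g=g_n$ and using $g_n^{-1}C_G=C_G$, one obtains, uniformly in $\xi$ and $n\ge0$, that $\emptyset\neq s_\xi\cap g_n(\overline F)\subset B(g_n(0),R)$, and moreover that every point of $s_\xi$ lies in $g_n(\overline F)$ for some $n$. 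In the opposite direction I would use proper discontinuity: for every $r>0$ the set $\Gamma_r:=\{g\in G:d(0,g(0))\le r\}$ is finite. These two facts do all the work.

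Now I would carry out the four inclusions. For $\Lr(N,\Phi_G)\subset L_{\mathrm r}(N)$: if $g_n\in N\gamma$ for infinitely many $n$, write $g_n=h_n\gamma$ with $h_n\in N$ (pairwise distinct, since the $g_n$ are), and note $s_\xi\cap B(g_n(0),R)\neq\emptyset$ while $B(g_n(0),R)\subset B(h_n(0),c)$ for $c:=R+d(0,\gamma(0))$, because $g_n(0)=h_n\gamma(0)$; hence $\xi\in L_{\mathrm r}(N)$. Conversely, if $\xi\in L_{\mathrm r}(N)$, choose $c>0$ and, for infinitely many distinct $h\in N$, a point $x_h\in s_\xi\cap B(h(0),c)$; then $x_h\in g_{n(h)}(\overline F)$ for some $n(h)$, so $d(g_{n(h)}(0),h(0))\le d(g_{n(h)}(0),x_h)+d(x_h,h(0))<R+c$, i.e.\ $g_{n(h)}^{-1}h\in\Gamma_{R+c}$ and $g_{n(h)}\in N\Gamma_{R+c}^{-1}$. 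Since $h\mapsto g_{n(h)}$ is finite-to-one, infinitely many distinct indices $n$ appear, and as $\Gamma_{R+c}$ is finite a pigeonhole argument produces one coset $N\gamma_0$ containing $g_n$ for infinitely many $n$, so $\xi\in\Lr(N,\Phi_G)$. The two inclusions for the uniformly radial sets are the exact analogues: from $g_n\in N\Gamma$ ($\Gamma$ finite) for all $n$ one writes $g_n=h_n\gamma_n$ and obtains $s_\xi\subset\bigcup_{h\in N}B(h(0),c)$ with $c:=R+\max_{\gamma\in\Gamma}d(0,\gamma(0))$; conversely, given $s_\xi\subset\bigcup_{h\in N}B(h(0),c)$, pick for each $n$ a point $x_n\in s_\xi\cap g_n(F)$, choose $h_n\in N$ with $x_n\in B(h_n(0),c)$, and deduce as above that $g_n^{-1}h_n\in\Gamma_{R+c}$, so $g_n\in N\Gamma_{R+c}^{-1}$ for every $n$, i.e.\ $\xi\in\Lur(N,\Phi_G)$.

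The step I expect to be the main obstacle is the uniform geometric estimate in the second paragraph: that $s_\xi$ stays within a bounded neighbourhood of $C_G$ uniformly over $\xi\in L(G)$, and that such a neighbourhood still meets each tile $g_n(\overline F)$ only inside a ball $B(g_n(0),R)$ of uniform radius. This is precisely where convex cocompactness of Schottky groups enters (it is the mechanism behind the identity $L_{\mathrm{ur}}(G)=L_{\mathrm r}(G)=L(G)$ noted above); once it is in place, everything else is routine bookkeeping with cosets of $N$ and with proper discontinuity of $G$.
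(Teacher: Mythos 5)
Your proof is correct and follows essentially the same route as the paper's: both translate between the coset condition on the words $v_{1}\cdots v_{n}$ and the ball condition on the $N$-orbit via the tiling picture and the uniform inclusion $C_{G}\cap g\overline{F}\subset B\left(g\left(0\right),R_{G}\right)$ of (\ref{eq:schottky-ex-2}). The only cosmetic difference is that for the two converse inclusions the paper invokes the finite covering $B\left(0,r\right)\cap C_{G}\subset\bigcup_{\gamma\in\Gamma}\gamma\overline{F}$ of (\ref{eq:Lur-coding-2}), whereas you use proper discontinuity (finiteness of $\left\{ g\in G:d\left(0,g\left(0\right)\right)\le R+c\right\} $) together with your uniform estimate $s_{\xi}\cap g\left(\overline{F}\right)\subset B\left(g\left(0\right),R\right)$; these mechanisms are interchangeable.
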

\begin{proof}
Let us begin by proving that  $\Lur\left(N,\Phi_G\right)\subset L_{\mathrm{ur}}\left(N\right)$. To start, let $\xi\in \Lur\left(N,\Phi_G\right)$ be given. By the definition of $\Lur\left(N,\Phi_G\right)$,
there exists $\omega=\left(v_{k},w_{k}\right)_{k\in\N}\in\Sigma_{\Phi_G}$ and a finite set $\Gamma\subset G$
such that $\pi_{\Phi_G}\left(\omega\right)=\xi$
and  $v_{1}v_{2}\cdots v_{k}\in N\Gamma$, for all $k\in\N$.
Hence, using  (\ref{eq:schottky-ex-2}), it follows that
\[
s_{\xi}\subset\bigcup_{h\in N}\bigcup_{\gamma\in\Gamma}B\left(h\gamma\left(0\right),R_{G}\right).
\]
Note that for each $h\in N$, $\gamma\in\Gamma$ and $x\in B\left(h\gamma\left(0\right),R_{G}\right)$
we have
\[
d\left(h\left(0\right),x\right)\le d\left(h\left(0\right),h\gamma\left(0\right)\right)+d\left(h\gamma\left(0\right),x\right)<\max\left\{ d\left(0,\gamma\left(0\right)\right):\gamma\in\Gamma\right\} +R_{G},
\]
which implies that
\[
\bigcup_{h\in N}\bigcup_{\gamma\in\Gamma}B\left(h\gamma\left(0\right),R_{G}\right)\subset\bigcup_{h\in N}B\left(h\left(0\right),R_{G}+\max\left\{ d\left(0,\gamma\left(0\right)\right):\gamma\in\Gamma\right\} \right).
\]
Thus, $\xi\in L_{\mathrm{ur}}\left(N\right)$.

For the converse inclusion, let $\xi\in L_{\mathrm{ur}}\left(N\right)$ be given. Then, by the definition of $L_{\mathrm{ur}}\left(N\right)$,
there exists a constant $c:=c\left(\xi\right)>0$ such that
\[
s_{\xi}\subset\bigcup_{h\in N}B\left(h\left(0\right),c\right).
\]
Hence, by (\ref{eq:Lur-coding-2}), there exists a finite set $\Gamma\subset G$
such that $s_{\xi}\subset\bigcup_{h\in N}\bigcup_{\gamma\in\Gamma}h\gamma\overline{F}$. We conclude that for $\omega=\left(v_{k},w_{k}\right)_{k\in\N}\in\Sigma_{\Phi_G}$
with $\pi_{\Phi_G}\left(\omega\right)=\xi$ we have that $\left\{ v_{1}v_{2}\cdots v_{k}:\,\, k\in\N\right\} \subset N\Gamma$
and hence, $\xi\in \Lur\left(N,\Phi_G\right)$.

Let us now address the inclusion $\Lr\left(N,\Phi_G\right)\subset L_{\mathrm{r}}\left(N\right)$. For this, let $\xi\in \Lr\left(N,\Phi_G\right)$ be given. By the definition of $\Lr\left(N,\Phi_G\right)$,
there exists $\omega=\left(v_{k},w_{k}\right)_{k\in\N}\in\Sigma_{\Phi_G}$, an element
$\gamma\in G$, a sequence $\left(h_{k}\right)_{k\in\N}$ of pairwise distinct  elements
in $N$ and a sequence $\left(n_{k}\right)_{k\in\N}$ tending to infinity
such that $\pi_{\Phi_G}\left(\omega\right)=\xi$ and $v_{1}v_{2}\cdots v_{n_{k}}=h_{k}\gamma$,
for all $k\in\N$. Using  (\ref{eq:schottky-ex-2}) it follows that $s_{\xi}\cap B\left(h_{k}\gamma\left(0\right),R_{G}\right)\neq\emptyset$, for all $k\in \N$.
Since  $B\left(h_{k}\gamma\left(0\right),R_{G}\right)\subset B\left(h_{k}\left(0\right),R_{G}+d\left(0,\gamma\left(0\right)\right)\right)$
for all $k\in\N$, we obtain that also $s_{\xi}\cap B\left(h_{k}\left(0\right),R_{G}+d\left(0,\gamma\left(0\right)\right)\right)\neq\emptyset$. We have thus shown that $\xi\in L_{\mathrm{r}}\left(N\right)$.

Finally, let us demonstrate that $L_{\mathrm{r}}\left(N\right)\subset \Lr\left(N,\Phi_G\right)$. To that end, pick an arbitrary $\xi\in L_{\mathrm{r}}\left(N\right)$ and let $\omega=\left(v_{k},w_{k}\right)_{k\in\N}\in\Sigma_{\Phi_G}$
with  $\pi_{\Phi_G}\left(\omega\right)=\xi$ be given. Then,  by definition of $L_{\mathrm{r}}\left(N\right)$,
there exists $c>0$ and a sequence $\left(h_{k}\right)_{k\in\N}$
of pairwise distinct elements in $N$ such that $s_{\xi}\cap B\left(h_{k}\left(0\right),c\right)\neq\emptyset$,
for all $k\in\N$. Using (\ref{eq:Lur-coding-2}) we deduce that  there exists a finite
set $\Gamma\subset G$ such that for all $k\in\N$ we have
\[
s_{\xi}\cap B\left(h_{k}\left(0\right),c\right)\cap\bigcup_{\gamma\in\Gamma}h_{k}\gamma \overline{F}\neq\emptyset.
\]
Since $\Gamma$ is finite, there exist  $\gamma_0\in\Gamma$ and
 sequences $\left(n_{k}\right)_{k\in \N}$  and $\left(l_{k}\right)_{k\in \N}$ tending to infinity such
that $s_{\xi}\cap B\left(h_{n_{k}}\left(0\right),c\right)\cap h_{n_{k}}\gamma_0 \overline{F}\neq\emptyset$ and $v_{1}v_{2}\cdots v_{l_{k}}=h_{n_{k}}\gamma_0$, for all $k\in \N$. Hence,  $\xi\in \Lr\left(N,\Phi_G\right)$.
\end{proof}

\providecommand{\bysame}{\leavevmode\hbox to3em{\hrulefill}\thinspace}
\providecommand{\MR}{\relax\ifhmode\unskip\space\fi MR }
\providecommand{\MRhref}[2]{%
  \href{http://www.ams.org/mathscinet-getitem?mr=#1}{#2}
}
\providecommand{\href}[2]{#2}

\end{document}